\documentclass[11pt,leqno]{amsart}

\usepackage{amsmath}
\usepackage{amssymb}
\usepackage{a4wide}
\usepackage{bbm}
\usepackage{graphics}
\usepackage{epsfig}
\usepackage{color}
\parskip = 0.05 in

\newcommand{\Subsection}[1]{\subsection{ #1} ${}^{}$}

%----------------------------------

% Theorems

\newtheorem{theorem}{Theorem}[section]
\newtheorem{lemma}[theorem]{Lemma}
\newtheorem{proposition}[theorem]{Proposition}
\newtheorem{definition}[theorem]{Definition}
\newtheorem{remark}[theorem]{Remark}

\newcounter{hypo}

\newenvironment{hyp}{  \begin{enumerate} \setcounter{enumi}{\value{hypo}} \item}{\stepcounter{hypo} \end{enumerate}}

% Sets of Numbers

\def\C{{\mathbb C}}

\def\N{{\mathbb N}} 
\def\R{{\mathbb R}} 

\def\Z{{\mathbb Z}}

% Others 

\def\S{{\mathbb S}}

\def\CE{\mathcal {E}}

\def\CM{\mathcal {M}}

\def\CO{\mathcal {O}}

\def\CS{\mathcal {S}}
\def\CT{\mathcal {T}}

\def\ker{\mathop{\rm Ker}\nolimits}

% Maths

\def\one{{\mathchoice {\rm 1\mskip-4mu l} {\rm 1\mskip-4mu l} {\rm 1\mskip-4.5mu l} {\rm 1\mskip-5mu l}}}
\def\re{\mathop{\rm Re}\nolimits}
 \def\im{\mathop{\rm Im}\nolimits}

\def\Op{\mathop{\rm Op}\nolimits}

\def\supp{\mathop {\rm supp}\nolimits} 
\def\arg{\mathop{\rm arg}\nolimits}

\def\bra{\langle} \def\ket{\rangle}
\def\diag{\mathop{\rm diag}\nolimits}
\def\sgn{\mathop{\rm sgn}\nolimits}

\def\<{\langle}
\def\>{\rangle}

\def\rank{\mathop{\rm Rank}\nolimits}

\def\res{\mathop{\rm Res}\nolimits}
\def\residue{\mathop{\rm Residue}\nolimits}

\def\MS{\mathop{\rm MS}\nolimits}

\def\Res{{\rm Res}}

%----------------------------------

%\linespread {1,6}

%----------------------------------

\makeatletter
 \@addtoreset{equation}{section}
 \makeatother
 
%--------------------------------------------------------

\title[Residues for barrier-top resonances]{Spectral projection, Residue of the scattering amplitude, and Schr\"odinger group expansion for barrier-top  resonances.}

\author[J.-F. Bony]{Jean-Fran\c{c}ois Bony}
\address{Jean-Fran\c{c}ois Bony, IMB (UMR CNRS 5251), Universit\'e Bordeaux 1, 33405 Talence, France}
\email{Jean-Francois.Bony@math.u-bordeaux1.fr}
\author[S. Fujii\'e]{Setsuro Fujii\'e}
\address{Setsuro Fujii\'e, Graduate School of Material Science, University of Hyogo, Japan}
%\address{Graduate School of Material Science, University of Hyogo, Japan}
\email{fujiie@sci.u-hyogo.ac.jp}
\author[T. Ramond]{Thierry Ramond}
\address{Thierry Ramond, LMO (UMR CNRS 8628), Universit\'e Paris Sud 11, 91405 Orsay, France}
\email{thierry.ramond@math.u-psud.fr}
\author[M. Zerzeri]{Maher Zerzeri}
\address{Maher Zerzeri, LAGA (UMR CNRS 7539), Universit\'e Paris 13, 93430 Villetaneuse, France}
\email{zerzeri@math.univ-paris13.fr}

%--------------------------------------------------------

\begin{document}

\begin{abstract}
We study the spectral projection associated to a barrier-top resonance for the semiclassical Schr\"odinger operator. First, we prove a resolvent estimate for complex energies close to such a resonance. Using that estimate and an explicit representation of the resonant states, we show that the spectral projection has a semiclassical expansion in integer powers of $h$, and compute its leading term. We use this result to compute the residue of the scattering amplitude at such a resonance. Eventually, we give an expansion for large times of the Schr\"odinger group in terms of these resonances.
\end{abstract}

\maketitle

\setcounter{tocdepth}{1}
\tableofcontents

\section{Introduction}

In this paper, we study  the behavior of different physical quantities at the resonances generated by the maximum of the potential of a semiclassical Schr\"odinger operator.
In particular, we show quantitatively to what extent the presence of these resonances drives the behavior of the scattering amplitude and of the Schr\"odinger group.

The resonances generated by the maximum point of the potential (usually called barrier-top resonances) have been studied by Briet, Combes and Duclos \cite{BrCoDu87_01,BrCoDu87_02} and Sj\"ostrand \cite{Sj87_01}. These authors have given a precise description of the resonances in any disc of size $h$ centered at the maximum of the potential. In particular, they have shown that the resonances lie at distance of order $h$ from the real axis, which is in very strong contrast to the case of shape resonances (the well in the island case), with exponentially small imaginary part (see Helffer and Sj\"{o}strand \cite{HeSj86_01}). The description of resonances in larger discs of size $h^{\delta}$, $\delta \in ]0,1]$ has been obtained by Kaidi and Kerdelhu\'e \cite{KaKe00_01} under a diophantine condition. For small discs of size one, this question has been treated in the one dimensional case by the third author \cite{Ra96_01} with the complex WKB method. In the two dimensional case, the resonances in discs of size one have also been considered by Hitrik, Sj{\"o}strand and V{\~u} Ng{\d{o}}c \cite{HiSjVu07_01} (see also the references in this paper). Here, we consider only the resonances at distance $h$ of the maximum of the potential and we recall their precise localization in Section~\ref{k5}.

Resonances can be defined as the poles of the meromorphic continuation of the cut-off resolvent (see e.g. Hunziker \cite{Hu86_01}). The generalized spectral projection associated to a resonance is defined as the residue of the resolvent at this pole:
\begin{equation*}
\Pi_{z} = - \frac1{2i \pi} \oint_{\gamma_{z}} ( P - \zeta )^{-1} d \zeta ,
\end{equation*}
as an operator from $L^{2}_{\rm comp}$ to $L^{2}_{\rm loc}$. If $z$ were an isolated eigenvalue, this formula would give the usual spectral projection. Many physical quantities can be expressed in terms of these generalized spectral projections. In the case of shape resonances, their semiclassical expansion has been computed by Helffer and Sj\"ostrand in \cite{HeSj86_01}. In  Section \ref{a1} below, we obtain the semiclassical expansion of the generalized  spectral projection for barrier-top resonances. Since the resonances in the present case have a much larger imaginary part, our result is very different from that of the shape resonance case.

Resonances appear also in scattering theory (they are called scattering poles in this context). In \cite{LaPh89_01}, Lax and Phillips have shown that they coincide with the poles of the meromorphic extension of the scattering amplitude. This result, proved for the wave equation in the exterior of a compact obstacle, was extended by G\'erard and Martinez \cite{GeMa89_02} to the long range case for the Schr\"odinger equation (see also the references in this paper for earlier works). For shape resonances, the residue of the scattering amplitude was calculated in the semiclassical limit by Nakamura \cite{Na89_01,Na89_02}, Lahmar-Benbernou \cite{Be99_01} and Lahmar-Benbernou and Martinez \cite{LaMa99_01}. More generally, upper bounds on the residues of the scattering amplitude have been obtained by Stefanov \cite{St02_01} (in the compact support case) and Michel \cite{Mi03_01} (in the long range case) for resonances very close to the real axis. In Section \ref{k2}, we give the semiclassical expansion of the residues of the scattering amplitude for barrier-top resonances and we will see that these upper bounds do not hold in the present setting.

It is commonly believed that resonances play also a crucial role in quantum dynamics. Indeed, it is sometimes possible to describe the long time evolution of the cut-off propagator (for example, the Schr\"{o}dinger or wave group) in term of the resonances. Typically, if the resonances are simple, the propagator $e^{-i t P}$ truncated by $\chi \in C^{\infty}_{0}$ satisfies
\begin{equation*}
\chi e^{- i t P} \chi = \sum_{z \text{ resonance of } P} e^{- i t z} \chi \Pi_{z} \chi + \text{remainder term} .
\end{equation*}
Here, $\Pi_{z}$ is the generalized spectral projection defined previously. Such a formula generalizes the Poisson formula, valid for the operators with discrete spectrum. The resonance expansion of the wave group was first obtained by Lax and Phillips \cite{LaPh89_01} in the exterior of a star-shaped obstacle. This result has been generalized, using various techniques, to different non trapping situations (see e.g. Va{\u\i}nberg \cite{Va89_01} and the references of the second edition of the book \cite{LaPh89_01}). The trapping situations have been treated by Tang and Zworski \cite{TaZw00_01} and Burq and Zworski \cite{BuZw01_01} for very large times. On the other hand, the time evolution of the quasiresonant states (sorts of quasimodes) has been studied by G\'erard and Sigal \cite{GeSi92_01}. A specific study of the Schr\"odinger group for the shape resonances created by a well in a island has been made by Nakamura, Stefanov and Zworski \cite{NaStZw03_01}. There are also some works concerning the  situation of a hyperbolic trapped set. We refer to the work of Christiansen and Zworski \cite{ChZw00_01} for the wave equation on the modular surface and on the hyperbolic cylinder, and to the work of Guillarmou and Naud \cite{GuNa09_01} for the wave equation on convex co-compact hyperbolic manifolds. Section \ref{k3} is devoted to the computation of the asymptotic behavior for large time of the Schr\"odinger group localized in energies close to the maximum of the potential.

For the proof of the different results of this paper, we use an estimate of the distorted resolvent around the resonances, polynomial with respect to $h^{-1}$. Indeed, such a bound allows to apply the semiclassical microlocal calculus. This estimate is established in Section~\ref{k4}. To prove it, we proceed as in \cite{BoFuRaZe07_01} and use the method developed by Martinez \cite{Ma02_01}, Sj\"{o}strand \cite{Sj97_01} and Tang and Zworski \cite{TaZw98_01}. Similar bounds around the resonances are already known in various situations (see e.g. G\'erard \cite{Ge88_01} for two strictly convex obstacles, Michel and the first author \cite{BoMi04_01} in the one dimensional case). Note that, in our setting, a limiting absorption principle have been proved in \cite{AlBoRa08_01}.

\section{Assumptions and resonances}
\label{k5}

We consider the semiclassical Schr\"odinger operator on $\R^{n}$, $n \geq 1$,
\begin{equation}\label{aop}
P = - h^{2} \Delta + V (x),
\end{equation}
where $V$ is a smooth real-valued function. We denote by $p(x,\xi)=\xi^2+V(x)$ the associated classical Hamiltonian. The vector field  
\begin{equation*}
H_p=\partial_\xi p \cdot \partial_x - \partial_x p \cdot \partial_\xi = 2 \xi \cdot \partial_x - \nabla V (x) \cdot \partial_\xi ,
\end{equation*}
 is the Hamiltonian vector field associated to $p$. Integral curves $t \mapsto \exp (t H_p )( x , \xi )$ of $H_p$ are called classical trajectories or bicharacteristic curves, and $p$ is constant along such curves.
The trapped set at energy $E$ for $P$ is defined as 
\begin{equation*}
K(E) = \big\{ (x,\xi) \in p^{-1}(E); \ \exp(tH_p)(x,\xi)\not\to \infty \mbox{ as } t\to \pm \infty \big\} ,
\end{equation*}
We shall suppose that $V$ satisfies the following assumptions
\begin{hyp} \label{A1}
$V \in C^{\infty} ( \R^{n} ; \R )$ extends holomorphically in the sector 
\begin{equation*}
{\mathcal S} =\{ x\in \C^{n} ; \  \vert \im x\vert \leq \delta \bra x \ket\} ,
\end{equation*}
for some $\delta > 0$. Moreover $V(x)\to 0$ as $x\to\infty$ in ${\mathcal S}$.
\end{hyp}
\begin{hyp} \label{A2}
$V$  has a non-degenerate maximum at $x=0$ and
\begin{equation*}
V (x) = E_{0} - \sum_{j=1}^{n} \frac{\lambda_{j}^{2}}{4} x_{j}^{2} + \CO ( x^{3} ) ,
\end{equation*}
with  $E_{0}> 0$ and $0 < \lambda_{1} \leq \lambda_{2} \leq \cdots \leq \lambda_{n}$.
\end{hyp}
\begin{hyp} \label{A3}
The trapped set  at energy $E_0$ is $K (E_{0} ) = \{ ( 0,0) \}$.
\end{hyp}
Notice that \ref{A3} ensures that $x=0$ is the unique global maximum for $V$. Moreover, there exists a pointed neighborhood of $E_{0}$ in which all the energy levels are non trapping. In the following, $( \mu_{k} )_{k \geq 0}$ denote the strictly increasing sequence of linear combinations over $\N = \{ 0 , 1, 2 , \ldots \}$ of the $\lambda_{j}$'s. In particular, $\mu_{0} =0$ and $\mu_{1} = \lambda_{1}$.

The linearization $F_{p}$ at $(0,0)$ of the Hamilton vector field $H_{p}$ is given by
\begin{equation*}
F_{p} =
\left( \begin{array}{cc}
0 & 2 Id \\
\frac{1}{2} \diag ( \lambda_{1}^{2} , \ldots , \lambda_{n}^{2} ) & 0
\end{array} \right) ,
\end{equation*}
and has eigenvalues $- \lambda_{n} , \dots , - \lambda_{1} , \lambda_{1} , \dots , \lambda_{n}$. Thus $(0,0)$ is a hyperbolic fixed point for $H_{p}$ and the stable/unstable manifold theorem gives the existence of a stable incoming Lagrangian manifold $\Lambda_-$ and a stable outgoing Lagrangian manifold $\Lambda_+$ characterized by
\begin{equation*}
\Lambda_{\pm} = \big\{ ( x , \xi ) \in T^{*} \R^{n} ; \ \exp(t H_{p})( x , \xi ) \to (0,0) \mbox{ as } t \to \mp \infty \big\} \subset p^{-1} (E_{0}) .
\end{equation*}
Moreover, there exist two smooth functions $\varphi_{\pm}$, defined in a vicinity of $0$, satisfying
\begin{equation*}
\varphi_{\pm} (x) = \pm \sum_{j=1}^{n} \frac{\lambda_{j}}{4} x_{j}^{2} + \CO (x^{3}) ,
\end{equation*}
and such that $\Lambda_{\pm} = \Lambda_{\varphi_{\pm}} : = \{ (x , \xi ) ; \ \xi = \nabla \varphi_{\pm} (x) \}$ near 
$(0,0)$. Since $P$ is a Schr\"odinger operator, we have $\varphi_{-} = - \varphi_{+}$.

Under the previous assumptions, the operator $P$ is self-adjoint with domain $H^2(\R^n)$, and we define the set $\Res(P)$ of resonances for $P$ as follows (see \cite{Hu86_01}). Let $R_0 > 0$ be a large constant, and let  $F:\R^n\to\R^n$ be a smooth vector field, such that $F(x)=0$ for $\vert x\vert\leq R_0$ and $F(x)=x$ for $\vert x\vert\geq R_{0} +1$. For $\mu \in \R$ small enough, we denote $U_\mu:L^2(\R^n)\to L^2(\R^n)$ the unitary operator defined by
\begin{equation}
U_\mu \varphi (x) = \big\vert \det(1+\mu d F (x)) \big\vert^{1/2}\varphi(x+\mu F(x)) ,
\end{equation}
for $\varphi\in C^{\infty}_{0}(\R^n)$. Then the operator $U_{\mu}P(U_{\mu})^{-1}$  is a differential operator with analytic coefficients with respect to $\mu$, and can be analytically continued to small enough complex values of $\mu$. For $\theta \in \R$ small enough, we denote
\begin{equation}\label{g1}
P_{\theta}=U_{i\theta}P(U_{i\theta})^{-1}.
\end{equation}
The spectrum of $P_{\theta}$ is discrete in $\CE_{\theta}=\{z\in \C ; \ -2\theta<\arg z\leq 0\}$, and  the resonances of $P$ are by definition the eigenvalues of $P_{\theta}$ in $\CE_{\theta}$. We denote their set by $\Res(P)$.
The multiplicity of a resonance is the rank of the spectral projection
\begin{equation*}
\Pi_{z , \theta} = - \frac1{2i \pi} \oint_{\gamma} ( P_{\theta} - \zeta )^{-1} d \zeta ,
\end{equation*}
where $\gamma$ is a small enough closed path around the resonance $z$. The resonances, as well as their multiplicity, do not depend on $\theta$ and $F$. As a matter of fact, the  resonances are also the poles of the meromorphic extension from the upper complex half-plane of the resolvent $(P-z)^{-1}: L^{2}_{\rm comp} ( \R^{n} )\to L^{2}_{\rm loc} ( \R^{n} )$ (see e.g. \cite{HeMa87_01}).

In the present setting, Sj\"ostrand \cite{Sj87_01} has given a precise description of the set of resonances in any disc $D(E_{0} ,C h)$ of center $E_{0}$ and radius $C h$. This result has also been proved simultaneously by Briet, Combes and Duclos \cite{BrCoDu87_02} under a slightly stronger hypothesis (a virial assumption).

\begin{theorem}[Sj\"ostrand]\sl \label{e17}
Assume \ref{A1}--\ref{A3}. Let $C>0$ be different from  $\sum_{j=1}^{n} (\alpha_{j} + \frac{1}{2} ) \lambda_{j}$ for all $\alpha \in \N^{n}$. Then, for  $h>0$ small enough, there exists a bijection $b_h$ between the sets $\Res_0(P)\cap D(E_0,C h)$  and  $\Res(P)\cap D(E_0,C h)$, where 
\begin{equation*}
\Res_0(P) = \Big\{ z_\alpha^0=E_0 - i h \sum_{j=1}^{n} \big( \alpha_j+\frac{1}{2} \big) \lambda_j ; \ \alpha\in\N^n \Big\} ,
\end{equation*}
such that $b_h(z) - z = o (h)$.
\end{theorem}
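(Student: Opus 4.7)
The plan is to reduce the eigenvalue problem for the distorted operator $P_\theta$ in $D(E_0, Ch)$ to an explicit model near the hyperbolic fixed point $(0,0)$, via three ingredients: microlocal concentration at the trapped set, a Birkhoff normal form, and a finite-dimensional Grushin reduction. The analytic distortion \eqref{g1} identifies resonances in $D(E_0, Ch)$ with eigenvalues of $P_\theta$. Hypothesis \ref{A3} and the non-trapping of nearby energies allow the construction of an escape function, in the spirit of Helffer--Sj\"ostrand, giving polynomial bounds for $(P_\theta - z)^{-1}$ microlocally away from any fixed neighborhood $\Omega$ of $(0,0)$. A standard Agmon-type argument then forces any eigenfunction of $P_\theta$ with eigenvalue in $D(E_0, Ch)$ to be microlocally concentrated in $\Omega$ modulo $\CO(h^\infty)$.

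Next, I would put $p$ into Birkhoff normal form near $(0,0)$. Since $F_p$ has the hyperbolic spectrum $\pm \lambda_j$ and the stable/unstable manifolds $\Lambda_\pm$ are transverse smooth Lagrangians generated by $\varphi_\pm$, a classical theorem of Moser (in the holomorphic form used by Sj\"ostrand) provides, modulo remainders of arbitrarily high order, a symplectic transformation $\kappa$ near $(0,0)$ straightening $\Lambda_-, \Lambda_+$ to $\{\xi = 0\}, \{x = 0\}$, such that
\[
p \circ \kappa^{-1}(x,\xi) = E_0 + f(x_1 \xi_1, \ldots, x_n \xi_n), \qquad f(\iota) = \sum_{j=1}^n \lambda_j \iota_j + \CO(|\iota|^2).
\]
Quantizing $\kappa$ by an $h$-FIO $U$ compatible with the complex contour conjugates $P_\theta$, microlocally near $(0,0)$, to $E_0 + \widehat{f}(M_1,\ldots,M_n) + R$, where $M_j = \frac{1}{2}(x_j h D_{x_j} + h D_{x_j} x_j)$ and, after keeping enough terms in the normal form, $R$ acts as $\CO(h^\infty)$ on Schwartz functions. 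The commuting operators $M_j$ admit, in the complex-distorted setting, a joint generalized eigenbasis $(v_\alpha)_{\alpha \in \N^n}$ with $M_j v_\alpha = -ih(\alpha_j + \tfrac12) v_\alpha$; evaluating $f$ on these joint eigenvalues recovers $z^0_\alpha - E_0$ to leading order in $h$.

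Finally, I would transfer the model spectrum back to $P_\theta$ via a Grushin problem. Let $\alpha^{(1)}, \ldots, \alpha^{(N)}$ enumerate the indices with $z^0_{\alpha^{(k)}} \in D(E_0, Ch)$, and let $u_k$ be the $v_{\alpha^{(k)}}$ pulled back by $U^{-1}$ and cut off near $(0,0)$. Set
\[
\CP(z) = \begin{pmatrix} P_\theta - z & R_- \\ R_+ & 0 \end{pmatrix}, \qquad R_+ u = \bigl( \langle u, u_k \rangle \bigr)_{k=1}^N, \quad R_- c = \sum_{k=1}^N c_k u_k.
\]
Microlocal concentration from the first step, together with ellipticity of the normal-form model off its joint spectrum, shows that $\CP(z)$ is invertible for $z \in D(E_0, Ch)$ at distance much larger than $o(h)$ from every $z^0_\alpha$. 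Hence resonances coincide with zeros of the finite-dimensional effective Hamiltonian $E_{-+}(z)$, and a direct computation in the $u_k$ basis yields $E_{-+}(z) = \diag(z - z^0_{\alpha^{(k)}}) + o(h)$. A Rouch\'e argument on small discs of radius $\gg o(h)$ around each $z^0_{\alpha^{(k)}}$ then produces the bijection $b_h$ with $b_h(z) - z = o(h)$. The main obstacle is the joint construction of a real canonical transformation compatible with the complex contour defining $P_\theta$ and its FIO quantization with errors that genuinely act as $o(h)$ on the model eigenstates $v_\alpha$; the non-resonance assumption $C \ne \sum_j (\alpha_j + \tfrac12) \lambda_j$ is precisely what ensures no $z^0_\alpha$ lies on $\partial D(E_0, Ch)$, making the Rouch\'e count and the definition of $b_h$ unambiguous.
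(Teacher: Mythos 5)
This theorem is not proved in the paper; it is cited directly from Sj\"ostrand \cite{Sj87_01} (and independently Briet--Combes--Duclos \cite{BrCoDu87_02}), so there is no internal proof to compare against. Your sketch — complex distortion, microlocal concentration near the hyperbolic fixed point via escape functions, a Birkhoff normal form straightening $\Lambda_\pm$ so that $p$ becomes a function of the actions $x_j\xi_j$, quantization of $x_j\xi_j$ as dilation operators with discrete spectrum $-ih(\alpha_j+\tfrac12)$ in a suitable Bargmann-type space, and a finite-rank Grushin reduction followed by a Rouch\'e count — is a faithful account of Sj\"ostrand's original argument, and your reading of the non-resonance condition on $C$ (no $z_\alpha^0$ on $\partial D(E_0,Ch)$) is the correct one.
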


In particular, the number of resonances in any disk $D(E_0,C h)$ is uniformly bounded with respect to $h$. For $z_{\alpha}^{0} \in \Res_{0} (P)$, we denote $z_{\alpha} = b_{h} ( z_{\alpha}^{0} )$.

\begin{definition}\sl
We shall say that $z_\alpha^0 \in \Res_0(P)$ is simple if $z_\alpha^0=z_\beta^0$ implies $\alpha=\beta$.
\end{definition}

\begin{remark}\sl \label{k1}
If $z_\alpha^0 \in \Res_{0} (P)$ is simple, the corresponding resonance $z_\alpha$ is simple for $h$ small enough and Proposition~0.3 of \cite{Sj87_01} proves that $z_\alpha$ has a complete asymptotic expansion in powers of $h$.
\end{remark}

\begin{remark}\sl
The analyticity of $V$ in a full neighborhood of $\R^{n}$ is used only for the localization of the resonances. Indeed, if the conclusions of Theorem \ref{e17} and Remark \ref{k1} hold for $V$ smooth and analytic outside of a compact set, then the results of this paper still apply under this weaker assumption.
\end{remark}

The semiclassical pseudodifferential calculus is a tool used throughout this paper, and we fix here some notations. We refer to \cite{DiSj99_01} for more details. For $m (x , \xi ,h) \geq 0$ an order function and $\delta \geq 0$, we say that a function $a (x, \xi ,h) \in C^{\infty} ( T^{*} \R^{n} )$ is a symbol of class $S_{h}^{\delta} (m)$ when, for all $\alpha \in \N^{2 n}$,
\begin{equation*}
\big\vert \partial_{x , \xi}^{\alpha} a (x , \xi ,h ) \big\vert \lesssim h^{- \delta \vert \alpha \vert} m (x , \xi ,h) .
\end{equation*}
If $a \in S_{h}^{\delta} (m)$, the semiclassical pseudodifferential operator $\Op (a)$ with symbol $a$ is defined by
\begin{equation*}
\big( \Op (a ) \varphi \big) (x) = \frac{1}{(2 \pi h)^{n}} \iint e^{i (x-y) \cdot \xi /h} a \Big( \frac{x+y}{2} , \xi ,h \Big) \varphi (y) \, d y \, d \xi ,
\end{equation*}
for all $\varphi \in C^{\infty}_{0} ( \R^{n} )$. We denote by $\Psi_{h}^{\delta} (m)$ the space of operators $\Op ( S_{h}^{\delta} (m) )$.

The rest of this paper is organized as follows. In Section \ref{k4}, we prove a resolvent estimate in the complex plane that we use in all the rest of the paper. Then, in Section \ref{a1}, we compute the spectral projection associated to a resonance. In section \ref{k2}, we give the asymptotic expansion of the residue of the scattering amplitude at a simple resonance for long range potentials. Section \ref{k3} is devoted to the computation of the asymptotic behavior for large $t$ of the Schr\"odinger group $e^{-i t P/h}$, where the spectral projection appears naturally. At last, we have placed in Appendix \ref{k6} some geometrical considerations about Hamiltonian curves in a neighborhood of the hyperbolic fixed point, that we need in Section \ref{a1}.

\section{Resolvent estimate}
\label{k4}

In this section, we prove a polynomial estimate for the resolvent of the distorted operator $P_{\theta}$ around the resonances. This estimate is used throughout the paper to control remainder terms. More precisely, we prove the following result.

\begin{theorem}[Resolvent estimate]\sl \label{aaa}
Assume \ref{A1}--\ref{A3}. There exists $\varepsilon >0$ such that, for all $C>0$ and $h$ small enough,

$i)$ The operator $P$ has no resonances in
\begin{equation*}
[ E_{0} - \varepsilon , E_{0} + \varepsilon ] + i [ -C h , 0] \setminus D (E_{0} , 2 C h) .
\end{equation*}

$ii)$ Assume $\theta = \nu h \vert \ln h \vert$ with $\nu >0$. Then, there exists $K >0$ such that
\begin{equation} \label{g3}
\big\Vert ( P_{\theta} - z )^{-1} \big\Vert \lesssim h^{-K} \prod_{z_{\alpha} \in \res (P) \cap D(E_{0},2Ch)} \vert z - z_{\alpha} \vert^{-1} ,
\end{equation}
for all $z \in [ E_{0} - \varepsilon , E_{0} + \varepsilon ] + i [ -C h , C h]$.
\end{theorem}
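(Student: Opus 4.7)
The plan is to combine a non-trapping estimate via an escape function, a Grushin reduction near the hyperbolic fixed point, and a maximum-principle argument to produce the factors $|z-z_\alpha|^{-1}$. \textbf{Step 1 (escape function).} Hypothesis \ref{A3} implies that the classical flow in a pointed neighborhood of $E_0$ is non-trapping outside every neighborhood of $(0,0)$. Following Martinez \cite{Ma02_01}, Sj\"ostrand \cite{Sj97_01} and Tang--Zworski \cite{TaZw98_01}, and as carried out in \cite{BoFuRaZe07_01}, I would construct a smooth, compactly supported escape function $G$ on $T^*\R^n$ with $H_p G \ge 0$ on $p^{-1}(]E_0-\varepsilon,E_0+\varepsilon[)$ and $H_p G \ge c_0 > 0$ outside a small neighborhood $U$ of $(0,0)$. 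Conjugating $P_\theta$ by $e^{M |\ln h|\Op(G)}$ deforms the Weyl symbol so that its imaginary part is at most $-c h|\ln h|$ outside $U$, while the complex distortion with $\theta = \nu h |\ln h|$ already ensures ellipticity at infinity. A G\aa rding inequality then yields
\begin{equation*}
\big\|(1-\chi)(P_\theta - z)^{-1}\big\| + \big\|(P_\theta - z)^{-1}(1-\chi')\big\| \lesssim h^{-N},
\end{equation*}
uniformly for $z \in [E_0-\varepsilon,E_0+\varepsilon]+i[-Ch,Ch]$ and any $\chi,\chi' \in C^\infty_0(T^*\R^n)$ microlocally equal to $1$ near $(0,0)$.

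\textbf{Step 2 (Grushin problem).} Let $N$ be the number of multi-indices $\alpha \in \N^n$ with $\sum_j(\alpha_j+\tfrac12)\lambda_j \le 3C$, and choose microlocally supported functions $e_1,\dots,e_N$ near $(0,0)$ spanning an approximate kernel of $P_\theta - E_0$ there (concretely, WKB states based on $\Lambda_+$). Setting $R_+ u = (\<u,e_j\>)_{j=1}^N$ and $R_- u_- = \sum_j u_-^j e_j$, the block operator
\begin{equation*}
\CP(z) = \begin{pmatrix} P_\theta - z & R_- \\ R_+ & 0 \end{pmatrix} : H^2 \times \C^N \to L^2 \times \C^N
\end{equation*}
is Fredholm of index $0$ and, combining Step 1 with a microlocal parametrix near $(0,0)$, invertible with polynomial-in-$h$ bound throughout the rectangle. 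Writing its inverse as $\begin{pmatrix}E & E_+\\ E_- & E_{-+}\end{pmatrix}$, the Schur identity $(P_\theta-z)^{-1} = E - E_+ E_{-+}^{-1} E_-$ holds whenever the $N\times N$ block $E_{-+}(z)$ is invertible, and the resonances of $P$ in the rectangle coincide with the zeros of the holomorphic function $f(z):=\det E_{-+}(z)$, with matching multiplicity.

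\textbf{Step 3 (conclusion).} Part (i) is then immediate from Theorem \ref{e17}: applied on $D(E_0,C'h)$ for some $C'>2C$ not on the critical lattice, it places every resonance within $o(h)$ of a point of $\Res_0(P)$, hence inside $D(E_0,2Ch)$ for $h$ small. For (ii), observe that $f$ is holomorphic on a neighborhood of $\overline{D(E_0,3Ch)}$ with $|f(z)| \le h^{-K_1}$ there, and vanishes exactly on $\Res(P) \cap D(E_0, 2Ch) = \{z_\alpha\}$. Factoring $f(z) = g(z)\prod_\alpha(z-z_\alpha)$, the function $g$ is holomorphic and nonvanishing on $D(E_0, 3Ch)$, and on $\partial D(E_0, 3Ch)$ it satisfies $|g(z)| \ge h^{K_2}$ (from part (i) and the polynomial upper bound, using $|z-z_\alpha| \le 5Ch$). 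The minimum-modulus principle yields $|g(z)| \ge h^{K_2}$ throughout $D(E_0, 2Ch)$; Cram\'er's rule then gives $\|E_{-+}(z)^{-1}\| \lesssim h^{-K} \prod_\alpha|z-z_\alpha|^{-1}$, and substitution into the Schur identity produces the bound \eqref{g3}.

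The delicate point is Step 1: the logarithmic weight $|\ln h|$ is borderline for the semiclassical pseudodifferential calculus, so that one must work in a symbol class $S_h^\delta$ with $\delta > 0$ tuned to $\nu$ and carefully track subprincipal terms in order to preserve the sign of the imaginary part through the Weyl quantization. This is the technical heart of \cite{BoFuRaZe07_01}, and it is also what fixes the scaling $\theta \asymp h|\ln h|$ in hypothesis (ii).
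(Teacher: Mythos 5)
Your proposal takes a genuinely different route from the paper. The paper conjugates $P_\theta - z$ by several exponential weights (two of them microlocalized to the $(y,\eta)$-side of a Birkhoff normal form near the fixed point, one of them on the $\sqrt{hM}$ scale and hence in $S_h^{1/2}$), shows $\|Q_z^{-1}\| \lesssim h^{-1}$ for $\im z \geq \delta h$ and $\|(Q_z - ih\widetilde{K})^{-1}\| \lesssim h^{-1}$ everywhere with a rank-$\CO(1)$ perturbation $\widetilde{K}$, and then invokes the Tang--Zworski semicontinuity argument (Proposition~4.2 of \cite{BoFuRaZe07_01}) to extract the factors $\prod |z-z_\alpha|^{-1}$. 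Your Grushin reduction to an $N\times N$ matrix $E_{-+}(z)$ plus a minimum-modulus argument on $\det E_{-+}$ achieves the same goal through a slightly different scalar function, and is indeed the other standard route (Sj\"ostrand--Zworski style). Both approaches ultimately control a determinant; the paper's avoids constructing the $R_\pm$ explicitly and instead hides the finite-rank correction in a compact perturbation of the conjugated operator.

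However, Step~1 has a genuine gap as stated. The claimed estimate $\|(1-\chi)(P_\theta - z)^{-1}\| + \|(P_\theta - z)^{-1}(1-\chi')\| \lesssim h^{-N}$, uniformly over the rectangle, cannot hold: $(P_\theta - z)^{-1}$ has poles at the resonances $z_\alpha$ in the rectangle, and the residue there is the finite-rank operator $\Pi_{z_\alpha,\theta}$ whose range is spanned by the distorted resonant state $g_\theta$. This state is microsupported along $\Lambda_+$ out to the distortion region, not only near $(0,0)$, so $(1-\chi)\Pi_{z_\alpha,\theta}\neq 0$ and the one-sided truncation does not remove the pole. What the escape-function construction actually gives is an \emph{a priori} weighted estimate of the form $\|(1-\chi)u\| \lesssim h^{-N}\|\widetilde{Q}_z u\| + h^{-N'}\|\chi' u\|$ for the conjugated operator; this is exactly what feeds into the Grushin invertibility but is not the same as a resolvent bound. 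Relatedly, the invertibility of $\CP(z)$ with polynomial bound is far from automatic: it requires a genuine parametrix for $P_\theta - z$ microlocally near the hyperbolic fixed point (which is the content of \cite{BoFuRaZe07_01}), not just the choice of WKB states $e_j$ based on $\Lambda_+$. Your closing remark also misdiagnoses the technical heart: the $|\ln h|$ prefactor in the exterior escape function keeps the weight in $S_h^{0^+}$ (benign); the weight that genuinely lands in $S_h^{1/2}$ is the one near $(0,0)$, rescaled on the $\sqrt{hM}$ scale, because the escape function $y^2-\eta^2$ degenerates quadratically at the fixed point and must be compensated by second microlocalization. Finally, Step~3 needs a lower bound on $|f|$ (hence on $|g|$) at a reference point with $\im z \geq \delta h$, obtained from the easy resolvent estimate there, and then a Harnack/two-constants argument to propagate the lower bound; the way you invoke ``part (i) and the polynomial upper bound'' does not by itself produce a lower bound on the boundary of $D(E_0,3Ch)$.
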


In particular, the previous theorem states that all the resonances in $[ E_{0} - \varepsilon , E_{0} + \varepsilon ] + i [ -C h , 0]$ are those given by Theorem \ref{e17}. The rest of this section is devoted to the proof of Theorem~\ref{aaa}. We follow the approach of Tang and Zworski \cite{TaZw98_01} and we use the constructions of \cite[Section 4]{BoFuRaZe07_01}, where the propagation of singularities through a hyperbolic fixed point is studied, and of \cite[Section 3]{AlBoRa08_01}, where a sharp estimate for the weighted resolvent for real energies is given.

\Subsection{Definition of a weighted operator $Q_{z}$}

The distorted operator $P_{\theta}$ defined in \eqref{g1} is a differential operator of order $2$ whose symbol $p_{\theta} \in S_{h}^{0} (1)$ satisfies
\begin{equation}
p_{\theta} ( x , \xi , h ) = p_{\theta , 0} ( x , \xi ) + h p_{\theta , 1} ( x , \xi ) + h^{2} p_{\theta , 2} ( x , \xi ) ,
\end{equation}
with $p_{\theta , \bullet} \in S_{h}^{0} ( \< \xi \>^{2} )$ and
\begin{equation*}
p_{\theta,0} ( x , \xi ) = p \big( x + i \theta F(x) , ( 1 + i \theta \, {}^t ( d F ( x )))^{-1} \xi \big) .
\end{equation*}
We write the Taylor expansion of $p_{\theta,0}(x,\xi)$ with respect to $\theta$ as
\begin{equation}\label{g2}
p_{\theta,0}(x,\xi)=p(x,\xi)-i\theta q(x,\xi)+\theta^2 r ( x , \xi , \theta ),\quad  q(x,\xi)=\big \{p(x,\xi),F(x)\cdot \xi\big\},
\end{equation}
for some  $r \in S_{h}^{0} ( \< \xi \>^2 )$ which vanishes in $\vert x\vert\leq R_{0}$. Notice that 
$$
q(x,\xi)=2 d F(x)\xi \cdot \xi -\nabla V(x)\cdot F(x),
$$
so that for $\varepsilon > 0$ small enough, there exists $R_{1} > R_{0} +1$ such that
\begin{equation} \label{e23}
q ( x, \xi ) \geq E_{0} ,
\end{equation}
for all $( x , \xi ) \in p^{-1}([E_0 - 2 \varepsilon , E_0 + 2 \varepsilon ])$ with $\vert x \vert\geq R_{1}$.

We want to gain as much ellipticity as we can near $(0,0)$. As in \cite[Section 4]{BoFuRaZe07_01}, we shall work with a weighted operator, and we start  by defining the weights. Let $\widetilde{p} (x , \xi ) = p ( x, \xi ) - E_{0}$ and $\widetilde{p}_{\theta} (x , \xi ,h ) = p_{\theta} ( x, \xi ,h ) - E_{0}$. There exists a symplectic map $\kappa$ defined near $B( 0 , \varepsilon_2 ) = \{ (x, \xi) \in T^{*} \R^{n} ; \ \vert ( x, \xi ) \vert \leq \varepsilon_2 \}$, with $0 < \varepsilon_{2} \ll \varepsilon$, such that, setting $(y,\eta )=\kappa(x,\xi)$,
\begin{equation}  \label{aa1}
\widetilde{p} (x, \xi) = {\mathcal B} (y, \eta ) y\cdot \eta.
\end{equation}
Here $(y,\eta)\mapsto {\mathcal B} ( y , \eta )$ is a $C^{\infty}$ map from $\kappa ( B( 0 , \varepsilon_2 ) )$ to the space $\CM_{n}( \R )$ of $n \times n$ matrices with real entries such that
\begin{equation*}
{\mathcal B} (0,0) =\diag(\lambda_1,\dots,\lambda_n).
\end{equation*}
Let $U$ be a unitary Fourier integral operator microlocally defined near $B ( 0 , \varepsilon_{2} )$ and associated to the canonical transformation $\kappa$. Then
\begin{equation}
\widehat{P} = U ( P - E_{0} ) U^{-1} ,
\end{equation}
is a pseudodifferential operator in $\Psi_{h}^{0} (1)$ with a real (modulo $S_{h}^{0} (h^{\infty})$) symbol $\widehat{p} (y, \eta )= \sum_{j \geq 0} \widehat{p}_{j} (y, \eta ) h^{j}$, such that
\begin{equation*}
\widehat{p}_{0} (y , \eta ) = {\mathcal B} (y , \eta ) y \cdot \eta .
\end{equation*}

Let $0 < \varepsilon_{1} < \varepsilon_{2}$. Since the trapped set at energy $E_0$ for $p$ is $\{ 0 \}$, we recall from \cite{GeSj87_01} that, for the compact set ${\mathcal K} = B(0 , 2 R_1) \setminus B( 0 , \varepsilon_{1} ) \cap p^{-1}([E_0 - 4 \varepsilon , E_0 + 4 \varepsilon ]) \subset T^{*} \R^{n}$, there exist $0 < \varepsilon_{0} < \varepsilon_{1}$ and a bounded function $g\in C^\infty ( T^{*} \R^{n} )$ such that $H_pg$ has compact support and
\begin{equation} \label{e26}
\left\{ \begin{aligned}
&g( x , \xi ) = 0 , &&\text{if } (x,\xi)\in B( 0 ,\varepsilon_0),\\
&H_p g(x , \xi ) \geq 0, &&\text{if } (x,\xi)\in T^*\R^n,\\
&H_p g( x , \xi ) \geq 1, &&\text{if } (x,\xi)\in {\mathcal K} .
\end{aligned} \right.
\end{equation}
As in \cite{Ma02_01}, we set, for $R \gg R_1$ to be chosen later,
\begin{equation}\label{g0}
g_{0} ( x , \xi ) = \chi_0 \Big( \frac{x}{R} \Big) \psi_0 ( p ( x , \xi ) ) g ( x , \xi ) \vert \ln h \vert ,
\end{equation}
where $\chi_{0} \in C_{0}^{\infty} ( \R^{n} ; [0,1] )$ with $\chi_{0} = 1$ on $B(0,1)$ and $\psi_{0} \in C_{0}^{\infty} ( \R ; [0,1] )$ with $\supp \psi_{0} \subset [ E_{0} - 4 \varepsilon , E_{0} + 4 \varepsilon ]$ and $\psi_{0} = 1$ in a neighborhood of $[ E_{0} - 3 \varepsilon , E_{0} + 3 \varepsilon ]$.

We also define functions on the $( y , \eta )$ side. We set
\begin{equation*}
\left\{ \begin{aligned}
&\widehat{g}_{1} ( y , \eta ) = ( y^2 - \eta^2 )\widehat{\phi}_{1} ( y , \eta ) \vert \ln h \vert , \\
&\widehat{g}_{2} ( y , \eta ) = \Big( \ln \Big\< \frac{y}{\sqrt{h M}} \Big\> - \ln \Big\< \frac{\eta}{\sqrt{h M}} \Big\> \Big) \widehat{\phi}_{2} ( y , \eta ) .
\end{aligned} \right.
\end{equation*}
Here $M > 1$ is a parameter that will be chosen later on. Since we consider the semiclassical regime, we will assume  that $h M < 1$. Moreover, $\widehat{\phi}_{\bullet} = \phi_{\bullet} \circ \kappa^{-1}$, where $\phi_{1} \in C^{\infty}_{0} ( B (0 , \varepsilon_{2} ) )$ is such that $\phi_{1} = 1$ near $ B (0, \varepsilon_{1} )$ and $\phi_{2} \in C^{\infty}_{0} ( B (0 , \varepsilon_{0} ))$ is such that $\phi_{2} = 1$ near $0$ in $T^{*} \R^{n}$. At last, we choose four cut-off functions $\chi_{1} , \chi_{2} , \chi_{3} , \chi_{4} \in C^{\infty}_{0} ( B ( 0 , \varepsilon_2 ) )$ such that, setting again $\widehat{\chi}_{\bullet} = \chi_{\bullet} \circ \kappa^{-1}$, we have
\begin{equation*}
\one_{\{ 0 \}} \prec \widehat{\phi}_{2} \prec \widehat{\phi}_{1} \prec \widehat{\chi}_{1} \prec \widehat{\chi}_{2} \prec \widehat{\chi}_{3} \prec \widehat{\chi}_{4} .
\end{equation*}
The notation $f \prec g$ means that $g =1$ near the support of $f$. We define the operators
\begin{equation*}
G_{\pm 0} = \Op \big( e^{\pm t_0 g_0} \big) , \ G_{\pm j} = \Op \big( e^{\pm t_j \widehat{g}_j} \big) \ \text{and} \ \widetilde{G}_{\pm j} = \Op \big( \widehat{\chi}_{j} e^{\pm t_j \widehat{g}_{j}} \big) ,
\end{equation*}
for $j=1,2$. Notice that $G_{\pm 0}$ is acting on functions of $( x , \xi )$, whereas the other operators are acting on functions of $( y , \eta )$. The $t_{\bullet}$'s are real constants that will be fixed below. Then,
\begin{align}
G_{\pm 0}\in \Psi_{h}^{0} \big( h^{-N_0} \big) , \ G_{\pm 1} & \in \Psi_{h}^{0} \big( h^{- N_{1}} \big) , \ G_{\pm 2} \in \Psi_{h}^{1/2} \big( h^{- N_2} \big) ,   \nonumber \\
&\widetilde{G}_{\pm 1} \in \Psi_{h}^{0} \big( h^{- N_{1}} \< \eta \>^{-\infty} \big) \ \text{ and } \ \widetilde{G}_{\pm 2} \in \Psi_{h}^{1/2} \big( h^{- N_2} \< \eta \>^{-\infty} \big) , \label{d1}
\end{align}
for some $N_{\bullet} \in \R$.

We define the operator
\begin{align}
Q_{z} = \Big( U^{-1} \big( \widetilde{G}_{-2} \widetilde{G}_{-1}-\Op(\widehat{\chi}_1) & \big) U+ Id \Big) G_{-0} (P_\theta -z) \nonumber \\
& G_{+0} \Big( U^{-1} \big( \widetilde{G}_{+1} \widetilde{G}_{+2} - \Op ( \widehat{\chi}_{1} ) \big) U + Id \Big) . \label{e1}
\end{align}
Splitting $P_{\theta} - z = \Op ( \widetilde{p}_{\theta} \chi_{4} ) + \Op ( \widetilde{p}_{\theta}( 1 - \chi_{4} ) ) - ( z - E_{0} )$, we write
\begin{equation*}
Q_{z} = Q_{1} + Q_{2} - ( z - E_{0} ) Q_{3} ,
\end{equation*}
and we compute the symbols of the operators $Q_{\bullet}$ separately.

\Subsection{Computation of $Q_{z}$}

The goal of this part is to prove the following identity.

\begin{lemma}\sl \label{e41}
Let $Q_{z}$ be the operator defined in \eqref{e1}. Then,
\begin{align}
\nonumber
Q_{z} =& \Op ( p_{\theta} ) + \Op ( i h t_{0} \{ g_{0} , p_{\theta} \} ) + U^{-1} \Op \big( i h t_{1} \{ \widehat{g}_{1} , \widehat{p}_{0} \} + i h t_{2} \{ \widehat{g}_{2} , \widehat{p}_{0} \} \big) U -z
\nonumber  \\
&+ \CO (h M^{-1} ) + \CO ( h^{\frac{3}{2}} M^{- \frac{1}{2}} \vert \ln h \vert^{2} ) + \CO ( \vert z - E_{0} \vert M^{-2} ) . \label{glou10}
\end{align}
\end{lemma}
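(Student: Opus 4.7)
The plan is to expand each of the three factors of $Q_z$ using semiclassical pseudodifferential calculus and Egorov's theorem for the Fourier integral operator $U$, then to use the nesting of cut-offs $\one_{\{0\}} \prec \widehat\phi_2 \prec \widehat\phi_1 \prec \widehat\chi_1 \prec \cdots \prec \widehat\chi_4$ to handle the hybrid ``$Id+$ correction'' outer factors. Following the decomposition $Q_z = Q_1 + Q_2 - (z - E_0) Q_3$ announced in the statement, I compute $Q_3$, $Q_2$ and $Q_1$ separately and sum.

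For the outer conjugator $A_- := Id + U^{-1}(\widetilde G_{-2}\widetilde G_{-1} - \Op(\widehat\chi_1))U$ (and similarly for $A_+$), the relation $\widehat\chi_1 \prec \widehat\chi_2$ implies that the principal symbol of $A_-$ equals $e^{-t_1 \widehat g_1 - t_2 \widehat g_2}\circ\kappa$ in $\kappa^{-1}(\supp\widehat\chi_1)$ and $1$ outside a slightly larger set. Since $\widehat g_2 = \CO(M^{-1})$ near $0$ and $g_0$ vanishes near $(0,0)$, standard symbolic calculus gives $Q_3 = A_- G_{-0} G_{+0} A_+ = Id + \CO(M^{-2})$. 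Thus $-(z-E_0)Q_3$ contributes $-(z-E_0) + \CO(\vert z - E_0\vert M^{-2})$, which combines with the $E_0$ implicit in $\Op(p_\theta) = \Op(\widetilde p_\theta) + E_0$ to produce the $-z$ in \eqref{glou10}.

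For $Q_2$, the factor $1-\chi_4$ vanishes on a neighborhood of $\kappa^{-1}(\supp\widehat\chi_2)$, so each $\widetilde G_{\pm j}$ ($j=1,2$) acts trivially microlocally on the symbol in question and the outer factors collapse to $Id$. The Baker--Campbell--Hausdorff expansion for the conjugation by $G_{\pm 0} = \Op(e^{\pm t_0 g_0})$ then yields $Q_2 = \Op(\widetilde p_\theta(1-\chi_4)) + \Op(iht_0\{g_0, \widetilde p_\theta(1-\chi_4)\}) + \CO(h^2\vert\ln h\vert^2)$. For $Q_1$, the localization by $\chi_4$ puts us where $g_0 \equiv 0$, so $G_{\pm 0}$ acts microlocally as the identity; the outer factors reduce to $U^{-1} G_{-2} G_{-1} U$ on the left and $U^{-1} G_{+1} G_{+2} U$ on the right; Egorov's theorem identifies $U(P_\theta - z)U^{-1}$ with $\widehat P + E_0 - z$ modulo better terms; and the BCH expansion of $G_{-2}G_{-1}(\widehat P + E_0 - z)G_{+1}G_{+2}$ produces the desired Poisson-bracket contributions $iht_1\{\widehat g_1,\widehat p_0\} + iht_2\{\widehat g_2,\widehat p_0\}$. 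Summing $Q_1 + Q_2$ and invoking $\chi_4 + (1-\chi_4) = 1$ at the symbolic level reassembles the leading term.

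The delicate point is the explicit form of the remainders. The weight $\widehat g_2$ lives in the exotic class $S_h^{1/2}(1)$ because of the $\sqrt{hM}$ scale: each derivative produces a factor of order $(hM)^{-1/2}$ instead of the usual $h^{-1/2}$. Consequently the natural BCH remainder $\tfrac{h^2}{2}\{\widehat g_2,\{\widehat g_2,\widehat p_0\}\}$, together with its operator composition errors, is of order $\CO(hM^{-1})$; the cross terms involving $\widehat g_1$ (which carries a $\vert\ln h\vert$ factor) and $\widehat g_2$ are of order $\CO(h^{3/2}M^{-1/2}\vert\ln h\vert^2)$; and the $-(z-E_0)Q_3$ piece yields $\CO(\vert z - E_0\vert M^{-2})$. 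These are exactly the three announced remainder terms in \eqref{glou10}, all other higher-order contributions being absorbed into them.
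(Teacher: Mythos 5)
Your decomposition into $Q_1 + Q_2 - (z-E_0)Q_3$ and the tools you invoke (symbolic calculus, Egorov's theorem for $U$, the exotic class of symbols at scale $\sqrt{hM}$) match the paper's strategy. However, there is a genuine gap in your treatment of $Q_1$.

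You assert that ``the localization by $\chi_4$ puts us where $g_0 \equiv 0$, so $G_{\pm 0}$ acts microlocally as the identity.'' This is false. The cut-off $\chi_4$ is supported in $B(0,\varepsilon_2)$, but $g_0 = \chi_0(x/R)\,\psi_0(p)\,g\,\vert\ln h\vert$ vanishes only on the smaller ball $B(0,\varepsilon_0)$ (where $g = 0$ by \eqref{e26}); on the annulus $B(0,\varepsilon_2)\setminus B(0,\varepsilon_1)\cap p^{-1}([E_0-4\varepsilon,E_0+4\varepsilon])\subset \mathcal K$ the escape function satisfies $H_p g \geq 1$ and $g$ is generally nonzero. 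Thus $\supp\chi_4 \cap \supp g_0 \neq \emptyset$, and the conjugation $G_{-0}\Op(\widetilde p_\theta\chi_4)G_{+0}$ produces a nontrivial Poisson bracket contribution $\Op(iht_0\{g_0,\widetilde p\chi_4\})$, a term of size $\CO(h\vert\ln h\vert)$ that cannot be absorbed into any of the announced remainders $\CO(hM^{-1})$, $\CO(h^{3/2}M^{-1/2}\vert\ln h\vert^2)$, $\CO(\vert z-E_0\vert M^{-2})$. In the paper this piece is tracked carefully: it appears as the symbol $a_3$ (equivalently $\widehat a_6$ after conjugation by $U$), and a separate computation, using that $\widehat g_2 = 0$ near $\supp\widehat a_6$ because $\supp\phi_2\subset B(0,\varepsilon_0)$, shows that the $G_{\pm j}$ factors leave this contribution essentially unchanged. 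Only then does the sum over $Q_1$ and $Q_2$ reassemble the full term $\Op(iht_0\{g_0,p_\theta\})$, since $\{g_0,p_\theta\} = \{g_0,\widetilde p\chi_4\} + \{g_0,\widetilde p_\theta(1-\chi_4)\}$. As written, your argument would yield $\Op(iht_0\{g_0,\widetilde p_\theta(1-\chi_4)\})$ in place of $\Op(iht_0\{g_0,p_\theta\})$, so the stated identity would not be established. The rest of the proposal (the $Q_2$ and $Q_3$ computations, the bookkeeping of remainders in $\widetilde{\mathcal S}_{1/M}$) is sound in outline.
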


\begin{remark}\sl \label{e40}
We will show in the proof of Lemma \ref{e41} (more precisely in \eqref{e16}) that the operators $( U^{-1} ( \widetilde{G}_{-2} \widetilde{G}_{-1}-\Op(\widehat{\chi}_1) ) U+ Id ) G_{-0}$ and $G_{+0} ( U^{-1} ( \widetilde{G}_{+1} \widetilde{G}_{+2} - \Op ( \widehat{\chi}_{1} ) ) U + Id )$ are invertible on $L^{2} ( \R^{n} )$ and $H^{2} ( \R^{n} )$ for $M^{-1}$ and $h$ small enough. Moreover, their inverses are polynomially bounded in $h^{-1}$. In particular, the resonances of $P$ are the poles of $Q_{z}^{-1}$ and to estimate $( P_{\theta} -z )^{-1}$, it is enough to estimate $Q_{z}^{-1}$.
\end{remark}

The rest of this section is devoted to the proof of Lemma~\ref{e41}. In fact, \eqref{glou10} is close to the equation (4.44) of \cite{BoFuRaZe07_01} and we will use some identities from \cite{BoFuRaZe07_01} when possible.

\begin{proof}
$\bullet$ First we consider $Q_{1}$. Since we can assume that $R_{0} > \varepsilon_{2}$, we have
\begin{equation*}
\Op ( \widetilde{p}_{\theta} \chi_{4} ) G_{+0} = \Op ( \widetilde{p} \chi_{4} ) G_{+0} = \Op ( a_{1} ) ,
\end{equation*}
with $a_{1} \in S_{h}^{0} (h^{-N_0})$ given, for any $k_0\in\N$, by
\begin{equation}  \label{e4}
a_{1} ( x , \xi ) = \sum_{k=0}^{k_{0}} \frac 1{k!}\Big( \big( \frac{i h}{2} \sigma ( D_{x} , D_{\xi} ; D_{y} , D_{\eta}) \big)^{k} \widetilde{p} \chi_{4} ( x , \xi ) e^{t_{0} g_{0} ( y , \eta )} \Big) \Big\vert_{y = x , \eta = \xi}  + h^{k_{0}-N_0} S^0_h(1).
\end{equation}
Then again
\begin{equation} \label{e6}
G_{-0} \Op ( \widetilde{p}_{\theta} \chi_{4} ) G_{+0} = G_{-0} \Op (a_{1}) = \Op ( a_{2} ) ,
\end{equation}
with $a_{2} \in S_{h}^{0} ( h^{-N_0})$ given, for any $k_1\in\N$, by
\begin{equation} \label{e5}
a_{2} (x,\xi) = \sum_{k=0}^{k_{1}} \frac 1{k!}\Big (
\big(\frac{i h}2 \sigma(D_{x},D_{\xi};D_{y},D_{\eta})\big)^k
e^{-t_{0}g_{0}(x,\xi)} a_{1} (y,\eta)
\Big) \Big \vert_{y=x,\eta=\xi}  + h^{k_{1}-N_0} S^0_h(1).  
\end{equation}
The $k$-th term in \eqref{e5} is easily seen to be $\CO ( h^{k} )$, so that choosing $k_1$ large enough, we conclude that $a_{2} \in S_{h}^{0} (1)$. Moreover $\supp a_{2} \subset \supp \chi_{4}$ modulo $S_{h}^{0} ( h^{\infty} )$, and
\begin{equation} \label{e7}
a_{2} = \widetilde{p} \chi_{4} + i h t_{0} \{ g_{0} , \widetilde{p} \chi_{4} \} + S_{h}^{0} ( h^2 \vert \ln h \vert^{2} ) = \widetilde{p} \chi_{4} + a_{3} ,
\end{equation}
for some $a_{3} \in S_{h}^{0} ( h \vert \ln h \vert )$ with $\supp a_{3} \subset \supp \chi_{4} \cap \supp g_0$ modulo $S_{h}^{0} ( h^{\infty} )$.

By Egorov's theorem,
\begin{equation} \label{e9}
U \Op ( \widetilde{p} \chi_{4} ) U^{-1} = \Op ( \widehat{a}_{4} ) \ \text{ and } \ U \Op ( a_{2} ) U^{-1} = \Op ( \widehat{a}_{5} ) ,
\end{equation}
where $\widehat{a}_{4} , \widehat{a}_{5} \in S_{h}^{0}(1)$ verify $\supp \widehat{a}_{4} , \supp \widehat{a}_{5} \subset \supp \widehat{\chi}_{4}$ modulo $S_{h}^{0} ( h^{\infty} )$. Moreover, from \eqref{e7}, we have
\begin{equation} \label{e11}
\widehat{a}_{5} = \widehat{a}_{4} + i h t_{0} \{ \widehat{g}_{0} , \widehat{p} \widehat{\chi}_{4} \} + S_{h}^{0} ( h^{2} \vert \ln h \vert^{2} ) = \widehat{a}_{4} + \widehat{a}_{6} ,
\end{equation}
with $\widehat{g}_{0} = g_{0} \circ \kappa^{-1}$ and a symbol $\widehat{a}_{6} \in S_{h}^{0}( h \vert \ln h \vert )$ satisfying $\supp \widehat{a}_{6} \subset \supp \widehat{\chi}_{4} \cap \supp \widehat{g}_0$ modulo $S_{h}^{0} ( h^{\infty} )$. Since $\phi_{1} , \phi_{2} \prec \chi_{1} \prec \chi_{2}$, we have $\widehat{g}_{1} , \widehat{g}_{2} \prec \widehat{\chi}_{1}$ and we get by pseudodifferential calculus
\begin{equation} \label{e8}
\widetilde{G}_{\pm 2} \widetilde{G}_{\pm 1} - \Op(\widehat{\chi}_1) + Id = G_{\pm 2} G_{\pm 1} + \CO ( h^{\infty} ) .
\end{equation}
Then, using \eqref{e6}, \eqref{e9}, \eqref{e11} and \eqref{e8}, we obtain
\begin{align}
Q_{1} &= U^{-1} \big( \widetilde{G}_{-2} \widetilde{G}_{-1} - \Op(\widehat{\chi}_1) + Id \big) U \Op ( a_{2} ) U^{-1} \big( \widetilde{G}_{+1} \widetilde{G}_{+2} - \Op(\widehat{\chi}_1) + Id \big) U + \CO ( h^{\infty} )  \nonumber \\
&= U^{-1} G_{-2} G_{-1} \Op ( \widehat{a}_{4} ) G_{+1} G_{+2} U + U^{-1} G_{-2} G_{-1} \Op ( \widehat{a}_{6} ) G_{+1} G_{+2} U + \CO ( h^{\infty} ) . \label{e10}
\end{align}

The first term in the right hand side of \eqref{e10} has already been computed in the equations (4.15)--(4.41) of \cite{BoFuRaZe07_01} (the reader should notice however that the symbol $p$ there has to be replaced by $p \chi_{4}$ here). We have
\begin{align}
G_{-2} G_{-1} \Op ( \widehat{a}_{4} ) G_{+1} G_{+2}=& \Op \big( \widehat{a}_{4} + i h t_{1} \{ \widehat{g}_{1} , \widehat{p}_{0} \widehat{\chi}_{4} \} + i h t_{2} \{ \widehat{g}_{2} , \widehat{p}_{0} \widehat{\chi}_{4} \} \big)   \nonumber \\
&+ \CO (h M^{-1} ) + \CO ( h^{\frac{3}{2}} M^{- \frac{1}{2}} \vert \ln h \vert^{2} ) .   \label{e12}
\end{align}

On the other hand, since $\supp \phi_{2} \subset B (0 , \varepsilon_{0})$, $\widehat{g}_{2} = 0$ near the support of $\widehat{g}_{0}$ and $\widehat{a}_{6}$. Thus,
\begin{equation*}
G_{-2} G_{-1} \Op ( \widehat{a}_{6}) G_{+1} G_{+2} = G_{-1} \Op ( \widehat{a}_{6}) G_{+1} + \CO ( h^{\infty} ) .
\end{equation*}
And then, working as in \eqref{e4}--\eqref{e7}, we obtain
\begin{equation} \label{e13}
G_{-2} G_{-1} \Op ( \widehat{a}_{6}) G_{+1} G_{+2} = \Op \big( i h t_{0} \{ \widehat{g}_{0} , \widehat{p} \widehat{\chi}_{4} \} \big) + \CO ( h^{2} \vert \ln h \vert^{2} ) .
\end{equation}

Using \eqref{e9} and collecting \eqref{e12} and \eqref{e13}, the identity \eqref{e10} gives
\begin{align}
\nonumber
Q_{1} =& \Op ( \widetilde{p} \chi_{4} ) + \Op ( i h t_{0} \{ g_{0} , \widetilde{p} \chi_{4} \} ) + U^{-1} \Op \big( i h t_{1} \{ \widehat{g}_{1} , \widehat{p}_{0} \} + i h t_{2} \{ \widehat{g}_{2} , \widehat{p}_{0} \} \big) U   \nonumber  \\
&+ \CO (h M^{-1} ) + \CO ( h^{\frac{3}{2}} M^{- \frac{1}{2}} \vert \ln h \vert^{2} ) .
\label{glou7}
\end{align}

$\bullet$ Now we consider $Q_{2}$.  As in \eqref{e4}--\eqref{e7}, we have
\begin{equation*}
G_{-0} \Op ( \widetilde{p}_\theta (1-\chi_{4} ) ) G_{+0} = \Op ( b_{1} )
\end{equation*}
for some $b_{1} \in S_{h}^{0} ( h^{- N_{0}} \< \xi \>^2)$. Moreover $\supp b_{1} \subset \supp (1 - \chi_{4} )$ modulo $S_{h}^{0} ( h^{\infty} )$ and
\begin{equation} \label{e14}
b_{1} = \widetilde{p}_{\theta} ( 1 - \chi_{4} ) + i h t_0 \{ g_0 , \widetilde{p}_\theta (1 - \chi_{4} ) \} + S_{h}^{0} ( h^2 \vert \ln h \vert^{2} ) .
\end{equation}
Since $\widehat{\chi}_{1} \prec \widehat{\chi}_{3}$, the pseudodifferential calculus gives $\widetilde{G}_{-1} = \widetilde{G}_{-1} \Op ( \widehat{\chi}_{3} ) + \Psi_{h}^{0} ( h^{\infty} \< \eta \>^{- \infty} )$. Furthermore, using Egorov's theorem, we obtain
\begin{align}
U^{-1} \big( \widetilde{G}_{-2}\widetilde{G}_{-1}-\Op(\widehat{\chi}_1) \big) U &= U^{-1} \big( \widetilde{G}_{-2}\widetilde{G}_{-1}-\Op(\widehat{\chi}_1) \big) \Op ( \widehat{\chi}_{3} ) U + \Psi_{h}^{0} ( h^{\infty} \< \xi \>^{- \infty} ) \nonumber  \\
&= U^{-1} \big( \widetilde{G}_{-2}\widetilde{G}_{-1}-\Op(\widehat{\chi}_1) \big) U \Op ( b_{2} ) + \Psi_{h}^{0} ( h^{\infty} \< \xi \>^{- \infty} ) , \label{e15}
\end{align}
where $b_{2} \in S_{h}^{0} ( \< \xi \>^{- \infty} )$ and $\supp b_{2} \subset \supp \chi_{3}$ modulo $S_{h}^{0} ( h^{\infty} \< \xi \>^{- \infty} )$. Using $\chi_{3} \prec \chi_{4}$, the supports of $b_{1}$ and $b_{2}$ are disjoint and
\begin{equation}  \label{glou9}
Q_{2} = \Op ( b_{1} ) + \CO ( h^{\infty} ) .
\end{equation}

$\bullet$ It remains to study $Q_{3}$. Working as in \eqref{e4}--\eqref{e7}, we get $G_{-0} G_{+ 0} = Id + \Op ( c_{1} )$ with $c_{1} \in S_{h}^{0} ( h^{2} \vert \ln h \vert^{2} )$ and $\supp c_{1} \subset \supp g_{0}$ modulo $S_{h}^{0} ( h^{\infty} )$. As in \eqref{e9}, we have
\begin{equation*}
U \Op ((1+ c_{1} ) \chi_{4} ) U^{-1} = \Op ( \widehat{c}_{2} ) ,
\end{equation*}
where $\widehat{c}_{2} \in S_{h}^{0} (1)$. Now \eqref{e8} and \eqref{e15} yield
\begin{align}
Q_{3} &= \Big( U^{-1} \big( \widetilde{G}_{-2}\widetilde{G}_{-1}-\Op(\widehat{\chi}_1) \big) U+ Id \Big) \big( \Op ((1+ c_{1} ) \chi_{4} ) + \Op ((1+ c_{1} ) (1 - \chi_{4} )) \big) \nonumber  \\
&\hspace{230pt} \Big( U^{-1} \big( \widetilde{G}_{+1} \widetilde{G}_{+2} - \Op ( \widehat{\chi}_{1} ) \big) U + Id \Big)   \nonumber  \\
&= U^{-1} G_{-2} G_{-1} \Op ( \widehat{c}_{2} ) G_{+1} G_{+2} U + \Op ( (1+ c_{1} ) (1 - \chi_{4} ) ) + \CO (h^{\infty} ) ,  \label{e2}
\end{align}
Working as in the equation (4.43) of \cite{BoFuRaZe07_01}, we get
\begin{equation*}
G_{-2} G_{-1} \Op ( \widehat{c}_{2} ) G_{+1} G_{+2} = \Op ( \widehat{c}_{2} ) + \CO ( M^{-2} ) + \CO ( h^{2} \vert \ln h \vert^{2} ) .
\end{equation*}
Combining \eqref{e2} with the last identity, we finally obtain
\begin{align}
Q_{3} &= U^{-1} \Op ( \widehat{c}_{2} ) U + \Op ( (1+ c_{1} ) (1 - \chi_{4} ) ) + \CO ( M^{-2} ) + \CO ( h^{2} \vert \ln h \vert^{2} ) \nonumber \\
&= Id + \CO ( M^{-2} ) + \CO ( h^{2} \vert \ln h \vert^{2} ) . \label{e3}
\end{align}

$\bullet$ The same way, one can prove
\begin{align*}
\Big( U^{-1} \big( \widetilde{G}_{-2} \widetilde{G}_{-1}-\Op(\widehat{\chi}_1) \big) U+ Id \Big) \Big( U^{-1} \big( \widetilde{G}_{+1} & \widetilde{G}_{+2} - \Op ( \widehat{\chi}_{1} ) \big) U + Id \Big) \\
&= Id + \CO ( M^{-2} ) + \CO ( h^{2} \vert \ln h \vert^{2} ) ,
\end{align*}
and the same kind of estimate holds for the product the other way round. On the other hand, $G_{-0} G_{+0} = Id + \CO ( h^{2} \vert \ln h \vert^{2} )$ and $G_{+0} G_{-0} = Id + \CO ( h^{2} \vert \ln h \vert^{2} )$. Then the two operators $( U^{-1} ( \widetilde{G}_{-2} \widetilde{G}_{-1}-\Op(\widehat{\chi}_1) ) U+ Id ) G_{-0}$ and $G_{+0} ( U^{-1} ( \widetilde{G}_{+1} \widetilde{G}_{+2} - \Op ( \widehat{\chi}_{1} ) ) U + Id )$ are invertible on $L^{2} ( \R^{n})$ for $M^{-1}$ and $h$ small enough and they satisfy
\begin{equation} \label{e16}
\begin{gathered}
\Big\Vert \Big( \Big( U^{-1} \big( \widetilde{G}_{-2} \widetilde{G}_{-1}-\Op(\widehat{\chi}_1) \big) U+ Id \Big) G_{-0} \Big)^{-1} \Big\Vert = \CO ( h^{-C} ) ,  \\
\Big\Vert \Big( G_{+0} \Big( U^{-1} \big( \widetilde{G}_{+1} \widetilde{G}_{+2} - \Op ( \widehat{\chi}_{1} ) \big) U + Id \Big) \Big)^{-1} \Big\Vert = \CO ( h^{-C} ) ,
\end{gathered}
\end{equation}
for some $C > 0$. The same thing can be done on $H^{2} ( \R^{n} )$ since the operators we consider differ from $Id$ by compactly supported pseudodifferential operators. This shows Remark \ref{e40}.

$\bullet$ Adding \eqref{glou7}, \eqref{glou9} and \eqref{e3}, we get Lemma \ref{e41}
\end{proof}

\Subsection{Estimates on the inverse of $Q_{z}$}

Let $\widehat{\varphi} \in C^{\infty}_{0} ( T^{*} \R^{n} ; [ 0 , 1] )$ be such that $\widehat{\varphi} =1$ near $0$. We define
\begin{equation}
\widetilde{K} = U^{-1} \widehat{K} U \ \text{ with } \ \widehat{K} = C_{1} \Op \Big( \widehat{\varphi} \Big( \frac{y}{\sqrt{h M}} , \frac{\eta}{\sqrt{h M}} \Big) \Big) ,
\end{equation}
for some large constant $C_{1} > 1$ fixed in the following.

\begin{lemma}\sl \label{e39}
Assume that $\delta > 0$, $C_{0} >1$ and $\theta = \nu h \vert \ln h \vert$ with $\nu >0$. Denote $r= \max ( \vert z - E_{0} \vert , h)$. Choose $M = \mu \sqrt{\frac{r}{h}}$ and fix $t_{2} , C_{1} , t_{1} , t_{0} ,R , \mu$ large enough in this order. Then, we have, for $h$ small enough,

$i)$ For $z \in [E_{0} - \varepsilon , E_{0} + \varepsilon ] + i [ - 2 C_{0} h , 2 C_{0} h ]$ and $\im z \geq \delta h$, the operator $Q_{z} : H^{2} ( \R^{n} ) \to L^{2} ( \R^{n} )$ is invertible and
\begin{equation}  \label{e24}
\big\Vert Q_{z}^{-1} \big\Vert = \CO ( h^{-1} ) .
\end{equation}

$ii)$ For $z \in [E_{0} - \varepsilon , E_{0} + \varepsilon ] + i [ - 2 C_{0} h , 2 C_{0} h ]$, the operator $Q_{z} - i h \widetilde{K} : H^{2} ( \R^{n} ) \to L^{2} ( \R^{n} )$ is invertible and
\begin{equation}  \label{e25}
\big\Vert ( Q_{z} - i h \widetilde{K} )^{-1} \big\Vert = \CO ( h^{-1} ) .
\end{equation}
\end{lemma}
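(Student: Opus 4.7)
The plan is to reduce both estimates to an operator inequality of the form $(A - A^{*})/(2i) \leq -c h$ on $L^{2}(\R^{n})$, where $A = Q_{z}$ in part i) and $A = Q_{z} - i h \widetilde{K}$ in part ii). Once this is established, the identity $\vert\text{Im}\langle Au,u\rangle\vert \geq ch\|u\|^{2}$ gives $\|Au\| \geq ch\|u\|$, and the identical bound applied to $A^{*}$ (whose anti-self-adjoint part has the opposite sign) then yields invertibility with $\|A^{-1}\| = O(h^{-1})$.

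Using Lemma \ref{e41}, the principal symbol of $(Q_{z} - Q_{z}^{*})/(2i)$ is
\begin{equation*}
\sigma_{I} = -\theta q + h t_{0}\{g_{0}, p\} + \bigl(h t_{1}\{\widehat{g}_{1}, \widehat{p}_{0}\} + h t_{2}\{\widehat{g}_{2}, \widehat{p}_{0}\}\bigr) \circ \kappa - \text{Im}\,z ,
\end{equation*}
to which is added $-hC_{1}\widehat{\varphi}(y/\sqrt{hM}, \eta/\sqrt{hM})\circ\kappa$ in case ii). I would show that $\sigma_{I} \leq -c h$ pointwise on the energy shell $\{\vert p - E_{0}\vert \leq 4\varepsilon\}$ by partitioning it into four regions, each dominated by a different positive contribution. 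Outside $B(0,R_{1})$, $-\theta q \leq -\nu E_{0} h\vert\ln h\vert$ thanks to \eqref{e23}. On the compact set $\mathcal{K}$ where $H_{p} g \geq 1$, choosing $R$ and $t_{0}$ large yields $h t_{0}\{g_{0}, p\} \geq c t_{0} h\vert\ln h\vert$. In the annulus $\{C_{1}^{-1}\sqrt{hM} \leq \vert(y,\eta)\vert \leq \varepsilon_{0}\}$ near the fixed point, a direct computation gives $\{\widehat{g}_{1}, \widehat{p}_{0}\} = 2\vert\ln h\vert\sum \lambda_{j}(y_{j}^{2}+\eta_{j}^{2}) + O(\vert(y,\eta)\vert^{3}\vert\ln h\vert)$ and $\{\widehat{g}_{2}, \widehat{p}_{0}\} \gtrsim (y^{2}+\eta^{2})/(hM + y^{2}+\eta^{2})$, so that $ht_{2}\{\widehat{g}_{2}, \widehat{p}_{0}\} \geq c t_{2} h/C_{1}^{2}$ alone suffices. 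In the ball $\{\vert(y,\eta)\vert \leq C_{1}^{-1}\sqrt{hM}\}$ surrounding the fixed point, all positive escape-function contributions are negligible; this is precisely where the hypothesis $-\text{Im}\,z \leq -\delta h$ of part i), or the term $-hC_{1}\widehat{\varphi}$ of part ii) (which equals $-hC_{1}$ there), takes over. Choosing $t_{2}, C_{1}, t_{1}, t_{0}, R, \mu$ successively large matches the constants across adjacent regions.

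Off the energy shell, $\text{Re}(\sigma Q_{z}) = p - \text{Re}\,z$ is bounded away from $0$ on $\supp(1 - \psi_{0})$, so a standard elliptic parametrix disposes of that part and reduces the full estimate to the on-shell bound established above. The symbolic inequality $\sigma_{I} \leq -ch$ is then upgraded to an operator inequality by the sharp G\r{a}rding inequality, and the remainders $O(hM^{-1}) + O(h^{3/2}M^{-1/2}\vert\ln h\vert^{2}) + O(\vert z - E_{0}\vert M^{-2})$ from Lemma \ref{e41} become $o(h)$ for the choice $M = \mu\sqrt{r/h}$ with $r = \max(\vert z - E_{0}\vert, h)$ and $\mu$ large; they are therefore absorbed into $ch$. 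The principal obstacle is to handle simultaneously the three distinct semiclassical scales — macroscopic (at which $\widehat{g}_{1}$ operates), mesoscopic $\sqrt{hM}$ (where $\widehat{g}_{2}$ and $\widetilde{K}$ operate), and logarithmic $\vert\ln h\vert$ (the prefactor of $g_{0}$ and $\widehat{g}_{1}$) — uniformly in $z$ and $M$; since $\widehat{g}_{2} \in S_{h}^{1/2}(1)$ lies outside the standard calculus, the whole argument must be carried out in the $S_{h}^{1/2}$ framework with careful tracking of every composition and G\r{a}rding error.
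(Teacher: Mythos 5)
Your proposal follows essentially the same strategy as the paper: establish a lower bound on $-\im\langle Au,u\rangle$ by decomposing phase space into regions where one escape-function (or $-\theta q$, or $-\im z$/$-hC_1\widehat{\varphi}$) dominates, choose $t_2, C_1, t_1, t_0, R, \mu$ successively large to control boundary terms, control the off-shell part by ellipticity, and repeat for the adjoint. The only presentational difference is that the paper assembles already-proved G\aa rding-type operator estimates from \cite{BoFuRaZe07_01} (equations \eqref{e18}--\eqref{e22}, \eqref{e28}, \eqref{e29}) rather than first claiming a clean pointwise symbolic inequality $\sigma_I \leq -ch$, but the mechanism of cancellation across adjacent cut-off regions is the same.
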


This lemma is similar to Proposition~4.1 of \cite{BoFuRaZe07_01}. We will only give the proof of part $ii)$ since the first part can be proved the same way (using \eqref{e19} instead of \eqref{e20}).

\begin{proof}
Let $\omega_{1} , \ldots , \omega_{5} \in C^{\infty}_{0} ( T^{*} \R^{n} ; [ 0 , 1] )$ be such that
\begin{equation}
\one_{\{ 0 \}} \prec \omega_{1} \prec \omega_{2} \prec \phi_{2} \prec \one_{B (0 , \varepsilon_{1} )} \prec \omega_{3} \prec \omega_{4} \prec \phi_{1} \prec \omega_{5} \prec \one_{B (0 , \varepsilon_{2} )} .
\end{equation}
As usual, we denote $\widehat{\omega}_{\bullet} = \omega_{\bullet} \circ \kappa^{-1}$. We now recall some ellipticity estimates proved in \cite{BoFuRaZe07_01} by means of G{\aa}rding's inequality and Calder\`{o}n--Vaillancourt's theorem. From the equations (4.50), (4.51), (4.54), (4.55) and (4.64) of \cite{BoFuRaZe07_01}, we have
\begin{align}
\big( \Op \big( - h \{ \widehat{g}_{2} , \widehat{p}_{0} \} (1 -  \widehat{\omega}_{2}^{2} ) \big) u , u \big) & \geq - C h \vert \ln h \vert \big\Vert \Op ( \widehat{\omega}_{4} - \widehat{\omega}_{1} ) u \big\Vert^{2} + \CO ( h^{\infty} ) \Vert u \Vert^{2} ,  \label{e18}  \\
\big( \Op \big( - h \{ \widehat{g}_{2} , \widehat{p}_{0} \} \widehat{\omega}_{2}^{2} \big) u , u \big) & \geq - C h M^{-1} \Vert u \Vert^{2}  ,   \label{e19}  \\
\big( \Op \big( - h t_{2} \{ \widehat{g}_{2} , \widehat{p}_{0} \} \widehat{\omega}_{2}^{2} + C_{1} h & \widehat{\varphi} \big)  u , u \big)  \nonumber  \\
&\geq \delta \min ( t_{2} , C_{1} ) h \big\Vert \Op ( \widehat{\omega}_{2} ) u \big\Vert^{2} + \CO ( h M^{-1} ) \Vert u \Vert^{2} ,  \label{e20}  \\
\big( \Op \big( - h \{ \widehat{g}_{1} , \widehat{p}_{0} \} (1 -  \widehat{\omega}_{4}^{2} ) \big) u , u \big) & \geq - C h \vert \ln h \vert \big\Vert \Op ( \widehat{\omega}_{5} - \widehat{\omega}_{3} ) u \big\Vert^{2} + \CO ( h^{\infty} ) \Vert u \Vert^{2} ,  \label{e21} \\
\big( \Op \big( - h \{ \widehat{g}_{1} , \widehat{p}_{0} \} \widehat{\omega}_{4}^{2} \big) u , u \big) & \geq \delta h \vert \ln h \vert \big\Vert \Op ( \widehat{\omega}_{4} - \widehat{\omega}_{1} ) u \big\Vert^{2} + \CO ( h^{2} \vert \ln h \vert ) \Vert u \Vert^{2} ,  \label{e22}
\end{align}
for some $\delta , C>0$ which do not depend on $h$, $M$ and the $t_{\bullet}$'s.

From \eqref{g2} and since $\theta = \nu h \vert \ln h \vert$,
\begin{equation}  \label{e27}
\Op ( p_{\theta} ) + \Op ( i h t_{0} \{ g_{0} , p_{\theta} \} ) = \Op ( p - i \theta q + i h t_{0} \{ g_{0} , p \} ) + \Psi_{h}^{0} ( h^{2} \vert \ln h \vert^{2} \< \xi \>^{2} ) .
\end{equation}
Let $\omega_{6} \in C^{\infty}_{0} ( T^{*} \R^{n} ;[0,1])$ be such that
\begin{equation}
\one_{B ( 0 , R_{1} ) \cap p^{-1} ( [ E_{0} - 2 \varepsilon , E_{0} + 2 \varepsilon ] )} \prec \omega_{6} \prec \one_{B ( 0 , 2 R_{1} ) \cap p^{-1} ( [ E_{0} - 3 \varepsilon , E_{0} + 3 \varepsilon ] )} .
\end{equation}
From the definition \eqref{g0} of $g_{0}$, we have
\begin{equation*}
- \{ g_{0} , p \} = \chi_{0} \Big( \frac{x}{R} \Big) \psi_{0} ( p ) H_{p} g \vert \ln h \vert + \frac{2}{R} \xi \cdot ( \partial_{x} \chi_{0} ) \Big( \frac{x}{R} \Big) \psi_{0} ( p (x, \xi )) g \vert \ln h \vert .
\end{equation*}
Using G{\aa}rding's inequality, \eqref{e26} implies
\begin{align}
\big( \Op ( - h t_{0} \{ g_{0} , p \} \omega_{6}^{2} ) u , u \big) &\geq t_{0} h \vert \ln h \vert \big\Vert \Op ( \omega_{6} - \omega_{3} ) u \big\Vert^{2} \nonumber \\
&\hspace{50pt} - C \frac{t_{0}}{R} h \vert \ln h \vert \big\Vert \Op ( 1 - \omega_{2} ) u \big\Vert^{2} + \CO ( h^{2} \vert \ln h \vert ) \Vert u \Vert^{2} .   \label{e28}
\end{align}

Let $\psi \in C^{\infty}_{0} ( [E_{0} - 2 \varepsilon , E_{0} + 2 \varepsilon ] ;[0,1])$ with $\psi =1$ near $[E_{0} - \varepsilon , E_{0} + \varepsilon ]$. Using the functional calculus for pseudodifferential operators, we can write
\begin{align*}
\big( \Op (q ) u , u \big) &= \big( \Op (q ) \psi (P) u , u \big) + \big( \Op (q ) (1- \psi (P)) u , u \big)  \\
&= \big( \Op (q \psi (p) ) u , u \big) + \big( \Op (q ) (P+i)^{-1} (P+i) (1- \psi (P)) u , u \big) + \CO (h) \Vert u \Vert^{2} .
\end{align*}
Note that the operator $\Op (q ) (P+i)^{-1}$ is uniformly bounded on $L^{2} ( \R^{n} )$. G{\aa}rding's inequality together with \eqref{e23} give
\begin{align}
\big( \Op (q ) u , u \big) &\geq \delta \big\Vert \Op ( \psi (p) ( 1 - \omega_{6} ) ) u \big\Vert^{2} - C \big\Vert ( P +i) ( 1- \psi (P) ) u \big\Vert \Vert u \Vert  \nonumber  \\
&\hspace{140pt} - C \big\Vert \Op ( \omega_{6} - \omega_{4} ) u \big\Vert^{2} + \CO ( h ) \Vert u \Vert^{2} .   \label{e29}
\end{align}

Adding \eqref{e18}, \eqref{e20}, \eqref{e21} and \eqref{e22} and using G{\aa}rding's inequality, we obtain
\begin{align}
- \im \big( \big( U^{-1} \Op \big( i h t_{1} & \{ \widehat{g}_{1} , \widehat{p}_{0} \} + i h t_{2} \{ \widehat{g}_{2} , \widehat{p}_{0} \} \big) U - i h \widetilde{K} \big) u , u \big)    \nonumber \\
\geq & \delta t_{1} h \vert \ln h \vert \big\Vert \Op ( \omega_{4} - \omega_{1} ) u \big\Vert^{2} + \delta \min ( t_{2} , C_{1} ) h \big\Vert \Op ( \omega_{2} ) u \big\Vert^{2}   \nonumber  \\
&- C t_{1} h \vert \ln h \vert \big\Vert \Op ( \omega_{5} - \omega_{3} ) u \big\Vert^{2} - C t_{2} h \vert \ln h \vert \big\Vert \Op ( \omega_{4} - \omega_{1} ) u \big\Vert^{2}  \nonumber \\
&+ \CO ( h M^{-1} ) \Vert u \Vert^{2} + \CO ( h^{2} \vert \ln h \vert ) \Vert u \Vert^{2} .  \label{e30}
\end{align}
Combining the formulas \eqref{glou10} and \eqref{e27} and the estimates \eqref{e28}, \eqref{e29} and \eqref{e30}, we get
\begin{align}
- \im \big( (Q_{z} - i h \widetilde{K} ) u , u \big) \geq & \delta \min ( t_{2} , C_{1} ) h \big\Vert \Op ( \omega_{2} ) u \big\Vert^{2} + \delta t_{1} h \vert \ln h \vert \big\Vert \Op ( \omega_{4} - \omega_{1} ) u \big\Vert^{2}      \nonumber \\
&+ t_{0} h \vert \ln h \vert \big\Vert \Op ( \omega_{6} - \omega_{3} ) u \big\Vert^{2} + \delta \nu h \vert \ln h \vert \big\Vert \Op ( \psi (p) ( 1 - \omega_{6} ) ) u \big\Vert^{2}   \nonumber  \\
&- C t_{2} h \vert \ln h \vert \big\Vert \Op ( \omega_{4} - \omega_{1} ) u \big\Vert^{2} - C t_{1} h \vert \ln h \vert \big\Vert \Op ( \omega_{5} - \omega_{3} ) u \big\Vert^{2}   \nonumber \\
&- C \frac{t_{0}}{R} h \vert \ln h \vert \big\Vert \Op ( 1 - \omega_{2} ) u \big\Vert^{2} - C \nu h \vert \ln h \vert \big\Vert \Op ( \omega_{6} - \omega_{4} ) u \big\Vert^{2}     \nonumber \\
&- C \nu h \vert \ln h \vert \big\Vert ( P +i) ( 1- \psi (P) ) u \big\Vert \Vert u \Vert + \im z \Vert u \Vert^{2}  \nonumber \\
&+ \CO ( h^{\frac{3}{2}} M^{- \frac{1}{2}} \vert \ln h \vert^{2} ) \Vert u \Vert^{2} + \CO ( h M^{-1} ) \Vert u \Vert^{2} + \CO ( \vert z - E_{0} \vert M^{-2} ) \Vert u \Vert^{2} . \label{k10}
\end{align}
Now, assume that $\im z \in [ - 2 C_{0} h , 2 C_{0} h]$ and $\re z - E_{0}$ is small. We choose the parameters, in this order, $\min ( t_{2} , C_{1}) \gg C_{0}$, $t_{1} \gg t_{2}$, $t_{0} \gg \max ( t_{1} , \nu )$ then $R \gg 1$ and finally $M = \mu \sqrt{\frac{r}{h}}$ with $\mu \gg 1$. Then, for $h$ small enough, G{\aa}rding's inequality implies
\begin{align}
\big\Vert (Q_{z} - i h \widetilde{K} ) u \big\Vert \Vert u \Vert &\geq - \im \big( (Q_{z} - i h \widetilde{K} ) u , u \big)  \nonumber \\
&\geq h \Vert \psi ( P ) u \Vert^{2} + \CO ( h \vert \ln h \vert ) \big\Vert ( P +i) ( 1- \psi (P) ) u \big\Vert^{2} . \label{e31}
\end{align}

On the other hand, from \eqref{glou10}, we have
\begin{equation*}
Q_{z} - i h \widetilde{K} = P - z + \Psi_{h}^{0} \big( h \vert \ln h \vert \< \xi \>^{2} \big) + \CO ( h \vert \ln h \vert ).
\end{equation*}
Then,
\begin{align}
\big\Vert ( Q_{z} - i h \widetilde{K} ) u \big\Vert &\geq \big\Vert ( 1 - \psi (P ) ) ( Q_{z} - i h \widetilde{K} ) u \big\Vert  \nonumber \\
&\geq \big\Vert ( 1 - \psi (P ) ) ( P-z ) u \big \Vert + \CO ( h \vert \ln h \vert ) \big\Vert ( P +i) u \big\Vert  \nonumber  \\
&\gtrsim \big\Vert ( P +i) (1 - \psi (P) ) u \big\Vert + \CO ( h \vert \ln h \vert ) \big\Vert ( P +i) u \big\Vert  \nonumber \\
&\gtrsim \big\Vert ( P +i) (1 - \psi (P) ) u \big\Vert + \CO ( h \vert \ln h \vert ) \big\Vert \psi ( P) u \big\Vert ,  \label{e32}
\end{align}
for all $h$ small enough.

Summing \eqref{e31} and $C_{2} h \vert \ln h \vert$ times the square of \eqref{e32}, we obtain
\begin{equation*}
\big\Vert ( Q_{z} - i h \widetilde{K} ) u \big\Vert \Vert u \Vert + C_{2} h \vert \ln h \vert \big\Vert ( Q_{z} - i h \widetilde{K} ) u \big\Vert^{2} \gtrsim h \Vert (P +i) u \Vert^{2} ,
\end{equation*}
for $C_{2}$ fixed large enough. Then, using $\Vert ( Q_{z} - i h \widetilde{K} ) u \Vert \Vert u \Vert \leq \delta h \Vert u \Vert^{2} + \frac{1}{\delta h} \Vert ( Q_{z} - i h \widetilde{K} ) u \Vert^{2}$ with $0 < \delta \ll 1$, we finally obtain
\begin{equation}
\big\Vert ( Q_{z} - i h \widetilde{K} ) u \big\Vert \gtrsim h \Vert ( P+i) u \Vert .
\end{equation}
Since we can obtain the same way the same estimate for the adjoint $( Q_{z} - i h \widetilde{K} )^{*}$, we get the lemma.
\end{proof}

To prove the part $i)$ of Theorem \ref{aaa} (the resonance free zone), we will use in addition the following lemma.

\begin{lemma}\sl \label{e37}
Assume $\vert z - E_{0} \vert \geq h$. Under the assumptions of Lemma \ref{e39}, we have
\begin{equation*}
\big\Vert \widetilde{K} Q_{z} u \big\Vert = \vert z - E_{0} \vert \big\Vert \widetilde{K} u \big\Vert + \CO ( h^{\frac{1}{2}} \vert z -E_{0} \vert^{\frac{1}{2}} ) \Vert u \Vert .
\end{equation*}
\end{lemma}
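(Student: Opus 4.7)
The plan is to evaluate $\widetilde K Q_z$ starting from the expression \eqref{glou10} of Lemma \ref{e41} and to exploit the fact that $\widetilde K$ is microlocally supported in the region $\{ \vert y \vert , \vert \eta \vert \lesssim \sqrt{hM} \}$ in the $(y,\eta)$-coordinates associated with $U$. I would first isolate the main contribution by rewriting
\begin{equation*}
\widetilde K ( \Op (p_{\theta}) - z ) = \widetilde K ( P_{\theta} - E_{0} ) - ( z - E_{0} ) \widetilde K ,
\end{equation*}
so that proving the lemma reduces to establishing
\begin{equation*}
\widetilde K Q_{z} u = - ( z - E_{0} ) \widetilde K u + S u , \qquad \Vert S u \Vert \leq C \sqrt{h \vert z - E_{0} \vert} \, \Vert u \Vert ,
\end{equation*}
after which the triangle inequality yields the claimed identity.

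I would then bound each contribution to $S$ separately. By Egorov's theorem, $U ( P_{\theta} - E_{0} ) U^{-1} = \widehat{P} + U ( P_{\theta} - P ) U^{-1}$, and the key observation is that $\widehat{p}_{0} ( y , \eta ) = \mathcal{B} ( y , \eta ) \, y \cdot \eta$ vanishes to second order at the origin. Hence on the support of $\widehat{\varphi} ( \cdot / \sqrt{hM} )$, the symbol of $\widehat{K} \widehat{P}$ is bounded by $C h M$, so by Calder\'on--Vaillancourt $\Vert \widetilde K ( P_{\theta} - E_{0} ) \Vert = \CO ( hM )$. The term $\widetilde K \Op ( i h t_{0} \{ g_{0} , p_{\theta} \} )$ is $\CO ( h^{\infty} )$ because $g_{0}$ vanishes on $B ( 0 , \varepsilon_{0} )$ and so has wavefront set disjoint from that of $\widetilde K$. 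For the term involving $\widehat{g}_{1} = ( y^{2} - \eta^{2} ) \widehat{\phi}_{1} \vert \ln h \vert$, the principal symbol of $\{ \widehat{g}_{1} , \widehat{p}_{0} \}$ itself vanishes to second order at the origin, hence is $\CO ( hM \vert \ln h \vert )$ on the microsupport of $\widehat{K}$, yielding a contribution $\CO ( h^{2} M \vert \ln h \vert )$. The bracket $\{ \widehat{g}_{2} , \widehat{p}_{0} \}$ is only $\CO ( 1 )$, but that still produces a $\CO ( h )$ contribution once multiplied by $h t_{2}$ and composed with $\widetilde K$. The three global remainders from \eqref{glou10}, composed with the bounded operator $\widetilde K$, retain their sizes $\CO ( h M^{-1} )$, $\CO ( h^{3/2} M^{-1/2} \vert \ln h \vert^{2} )$ and $\CO ( \vert z - E_{0} \vert M^{-2} )$.

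The main obstacle is to check that all these scales are simultaneously dominated by $\sqrt{h \vert z - E_{0} \vert}$. With the choice $M = \mu \sqrt{r/h}$ from Lemma \ref{e39} and the hypothesis $r = \vert z - E_{0} \vert \geq h$, one has $h M = \mu \sqrt{hr}$, $h M^{-1} \leq h / \mu$, $h^{3/2} M^{-1/2} \vert \ln h \vert^{2} \leq h^{3/2} \vert \ln h \vert^{2} / \sqrt{\mu}$ and $\vert z - E_{0} \vert M^{-2} = h / \mu^{2}$, each of which is bounded by $\sqrt{hr}$ for $h$ small, since $h \leq \sqrt{hr}$ whenever $r \geq h$. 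This calibration, together with the observation that the quadratic vanishing of $\widehat{p}_{0}$ (and of $\{ \widehat{g}_{1} , \widehat{p}_{0} \}$) at the origin converts the microsupport scale $\sqrt{hM}$ of $\widetilde K$ into a norm gain of $hM$, is the heart of the argument; the remaining steps are routine applications of the semiclassical symbolic calculus.
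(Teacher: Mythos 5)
Your proposal is correct and follows essentially the same route as the paper: decompose $\widetilde{K}Q_z$ via Lemma~\ref{e41}, peel off $-(z-E_0)\widetilde{K}$, kill the $g_0$ term by disjointness of microsupports, and bound the remaining pieces on the scale $\sqrt{hM}$ before calibrating with $M=\mu\sqrt{r/h}$ and $r\ge h$. The one place you diverge is the $\widehat{g}_1$ bracket: the paper cites \cite[(4.38)]{BoFuRaZe07_01} to place $iht_1\{\widehat{g}_1,\widehat{p}_0\}$ in $\widetilde{\mathcal{S}}_{1/M}(h^{3/2}M^{1/2}\vert\ln h\vert\langle(Y,H)\rangle)$ and so gets $\CO(h^{3/2}M^{1/2}\vert\ln h\vert)$, whereas you argue from the quadratic vanishing of $\{\widehat{g}_1,\widehat{p}_0\}$ at the origin to get $\CO(h^2M\vert\ln h\vert)$; your bound is sharper (since $hM<1$) and still passes the calibration, but note that turning a pointwise bound on the principal symbol near $0$ into an $L^2$ operator bound still requires controlling derivatives in the rescaled calculus $\widetilde{\mathcal{S}}_{1/M}$, which is exactly the machinery the paper invokes by citation — so the intermediate estimate should be stated with that justification rather than directly from Calder\'on--Vaillancourt on the raw symbol.
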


\begin{proof}
Since $\Vert \widetilde{K} \Vert \lesssim 1$, \eqref{glou10} gives
\begin{align}
\widetilde{K} Q_{z} = & \widetilde{K} \Op ( \widetilde{p}_{\theta} ) + \widetilde{K} \Op ( i h t_{0} \{ g_{0} , p_{\theta} \} ) + \widetilde{K} U^{-1} \Op \big( i h t_{1} \{ \widehat{g}_{1} , \widehat{p}_{0} \} + i h t_{2} \{ \widehat{g}_{2} , \widehat{p}_{0} \} \big) U    \nonumber  \\
&- ( z - E_{0} ) \widetilde{K} + \CO (h M^{-1} ) + \CO ( h^{\frac{3}{2}} M^{- \frac{1}{2}} \vert \ln h \vert^{2} ) + \CO ( \vert z - E_{0} \vert M^{-2} ) .  \label{e33}
\end{align}
Since the support of $\widehat{g}_{0}$ does not intersect the support of the symbol of $\widehat{K}$, we obtain
\begin{equation} \label{e42}
\widetilde{K} \Op ( i h t_{0} \{ g_{0} , p_{\theta} \} ) = \CO (h^{\infty} ).
\end{equation}
Moreover, working as in \eqref{e15},
\begin{align*}
\widetilde{K} \Op ( \widetilde{p}_{\theta} ) &= U^{-1} \widehat{K} U \Op ( \widetilde{p} \chi_{4} ) + \CO (h^{\infty} )  \\
&= U^{-1} \widehat{K} \Op ( \widehat{p} ) U + \CO ( h^{\infty} ) .
\end{align*}
We now rescale the variables as in \cite{BuZw04_01} and in the equation (4.18) of \cite{BoFuRaZe07_01}. We define a unitary transformation $V$ on $L^{2} ( \R^{n})$ by
\begin{equation*}
(V f ) ( y ) = ( h M)^{- \frac{n}{4}} f \big( (h M)^{- \frac{1}{2}} y \big) .
\end{equation*}
If $a (y , \eta )$ is a symbol, then
\begin{equation*}
V^{-1} \Op_{h} ( a ( y , \eta ) ) V = \Op_{\frac{1}{M}} \big( a \big( (h M)^{\frac{1}{2}} Y , (h M)^{\frac{1}{2}} H \big) \big) .
\end{equation*}
If possible, we will identify in the following an operator with its conjugation by $V$. As in \cite[(4.24)]{BoFuRaZe07_01}, we define the class of symbols $a \in \widetilde{{\mathcal S}}_{\frac{1}{M}} ( m  )$, for an order function $m (Y,H)$, by
\begin{equation*}
\big\vert \partial_{x}^{\alpha} \partial_{H}^{\beta} a ( Y ,H) \big\vert \lesssim \< Y \>^{- \frac{\vert \alpha \vert}{2}} \< H \>^{- \frac{\vert \beta \vert}{2}} m (Y,H).
\end{equation*}
We refer to the appendix of \cite{BoFuRaZe07_01} for the pseudodifferential calculus in $\widetilde{{\mathcal S}}_{\frac{1}{M}}$. From \cite[(4.23)]{BoFuRaZe07_01}, we have that $\widehat{p} \in \widetilde{{\mathcal S}}_{\frac{1}{M}} ( h M \< ( Y, H ) \>^{2} )$. Since $\varphi \in C^{\infty}_{0} (T^{*} \R^{n} )$, we also have $\varphi (Y ,H) \in \widetilde{{\mathcal S}}_{\frac{1}{M}} ( \< ( Y, H ) \>^{- \infty} )$. Then, the pseudodifferential calculus in $\widetilde{{\mathcal S}}_{\frac{1}{M}}$ implies
\begin{equation} \label{e34}
\widetilde{K} \Op ( \widetilde{p}_{\theta} ) = \CO ( h M ) .
\end{equation}
The same way, \cite[Equation (4.38)]{BoFuRaZe07_01} gives $i h t_{1} \{ \widehat{g}_{1} , \widehat{p}_{0} \} \in \widetilde{{\mathcal S}}_{\frac{1}{M}} ( h^{\frac{3}{2}} M^{\frac{1}{2}} \vert \ln h \vert \< ( Y, H ) \> )$. So,
\begin{equation} \label{e35}
\widetilde{K} U^{-1} \Op ( i h t_{1} \{ \widehat{g}_{1} , \widehat{p}_{0} \} ) U = U^{-1} \widehat{K} \Op ( i h t_{1} \{ \widehat{g}_{1} , \widehat{p}_{0} \} ) U + \CO (h^{\infty} ) = \CO ( h^{\frac{3}{2}} M^{\frac{1}{2}} \vert \ln h \vert ) .
\end{equation}
Working in $S_{h}^{1/2}$, we get
\begin{equation*}
\widetilde{K} U^{-1} \Op ( i h t_{2} \{ \widehat{g}_{2} , \widehat{p}_{0} \} ) U = U^{-1} \widehat{K} \Op ( i h t_{2} \{ \widehat{g}_{2} , \widehat{p}_{0} \} \widehat{\omega}_{2} ) U + \CO (h^{\infty} ).
\end{equation*}
Since $\widehat{\omega}_{2} \prec \widehat{\phi}_{2}$, \cite[Equation (4.48)]{BoFuRaZe07_01} yields that $i h t_{2} \{ \widehat{g}_{2} , \widehat{p}_{0} \} \widehat{\omega}_{2} \in S_{\frac{1}{M}}^{0} ( h)$. Using Calder\`{o}n--Vaillancourt's theorem for this operator, we finally obtain
\begin{equation} \label{e38}
\widetilde{K} U^{-1} \Op ( i h t_{2} \{ \widehat{g}_{2} , \widehat{p}_{0} \} ) U = \CO ( h ).
\end{equation}
The lemma follows from \eqref{e33}, the choice of $M$ in Lemma \ref{e39} and the estimates \eqref{e42}, \eqref{e34}, \eqref{e35} and \eqref{e38}.
\end{proof}

\Subsection{Proof of Theorem \ref{aaa}}

We first prove that \eqref{g3} holds for 
\begin{equation*}
z \in [ E_{0} - A h , E_{0} + A h] + i [- C_{0} h , C_{0} h] .
\end{equation*}
Here, $A>0$ is any fixed constant. We used a method due to Tang and Zworski \cite{TaZw98_01}. For $z \in [ E_{0} - 2 A h , E_{0} + 2 A h] + i [- 2 C_{0} h , 2 C_{0} h]$, the quantity $M$ can always be replaced by $\mu \gg 1$ in Lemma \ref{e39} (see \eqref{k10}--\eqref{e31}). Then, $z \mapsto Q_{z}$ is holomorphic in this set and $\Vert \widetilde{K} \Vert_{\rm tr} = \CO (1)$. As usual (see Section 4 of \cite{BoFuRaZe07_01} for instance), we can find an operator $K$ such that $\Vert K \Vert \lesssim 1$, $\rank K = \CO (1)$ and such that \eqref{e25} holds with $\widetilde{K}$ replaced by $K$. Furthermore, thanks to Remark~\ref{e40}, the resonances coincide with the poles of $Q_{z}^{-1}$ (with the same multiplicity). Mimicking the proof of Proposition 4.2 of \cite{BoFuRaZe07_01} or Lemma 6.5 of \cite{BoMi04_01} (which are adaptations of Lemma 1 of \cite{TaZw98_01}), the estimates \eqref{e24} and \eqref{e25} imply
\begin{equation*}
\big\Vert Q_{z}^{-1} \big\Vert \lesssim h^{-K_{1}} \prod_{z_{\alpha} \in \res (P) \cap D ( E_{0} , 2 C_{0} h )} \vert z - z_{\alpha} \vert^{-1} ,
\end{equation*}
for some $K_{1} > 0$ and any $z \in [ E_{0} - A h , E_{0} + A h] + i [- C_{0} h , C_{0} h]$. On the other hand, Remark~\ref{e40} gives
\begin{equation*}
\big\Vert ( P_{\theta} - z)^{-1} \big\Vert \lesssim h^{- K_{2}} \big\Vert Q_{z}^{-1} \big\Vert ,
\end{equation*}
for some $K_{2} > 0$. This proves \eqref{g3} for $z \in [ E_{0} - A h , E_{0} + A h] + i [- C_{0} h , C_{0} h]$.

Thanks to Theorem~\ref{e17} which describes all the resonances in any neighborhood of size $h$ of $E_{0}$, it remains to prove that $P$ has no resonance in
\begin{equation} \label{e43}
\big( [E_{0} - \varepsilon , E_{0} + \varepsilon ] \setminus [ E_{0} - A h , E_{0} + A h] \big) + i [- C_{0} h , C_{0} h] ,
\end{equation}
for one $A >0$ and that the resolvent satisfies in this region an upper bound polynomial with respect to $h^{-1}$. In particular, we can assume that $\vert z - E_{0} \vert \geq h$. Using Lemma~\ref{e39}, Lemma~\ref{e37} and $\Vert \widetilde{K} Q_{z} u \Vert \lesssim \Vert Q_{z} u \Vert$, we get
\begin{align*}
\Vert Q_{z} u \Vert & \geq \delta h \Vert (P +i) u \Vert - h \big\Vert \widetilde{K} u \big\Vert ,  \\
\Vert Q_{z} u \Vert &\geq \delta \vert z - E_{0} \vert \big\Vert \widetilde{K} u \big\Vert + \CO ( h^{\frac{1}{2}} \vert z -E_{0} \vert^{\frac{1}{2}} ) \Vert u \Vert ,
\end{align*}
for some $\delta > 0$. Then, summing the first identity with $h \delta^{-1} \vert z - E_{0} \vert^{-1}$ times the second one, we obtain
\begin{equation*}
\Vert Q_{z} u \Vert \gtrsim h \Vert (P +i) u \Vert + \CO ( h^{\frac{3}{2}} \vert z -E_{0} \vert^{- \frac{1}{2}} ) \Vert u \Vert ,
\end{equation*}
since $h \delta^{-1} \vert z - E_{0} \vert^{-1} \lesssim 1$. If now we assume that $\vert z - E_{0} \vert \geq A h$, we get
\begin{equation*}
\Vert Q_{z} u \Vert \gtrsim h \Vert (P +i) u \Vert + \CO ( h A^{- \frac{1}{2}} ) \Vert u \Vert \gtrsim h \Vert (P +i) u \Vert ,
\end{equation*}
for $A$ large enough. Thanks to Remark~\ref{e40}, this implies that $P$ has no resonance in the region given in \eqref{e43} and that \eqref{g3} holds in this set.

\section{Spectral projection}
\label{a1}

The purpose of this part is to give the asymptotic expansion of the generalized spectral projection $\Pi_{z_{\alpha}}$ associated to an isolated resonance $z_{\alpha}$ in some $D ( E_{0} , C h)$. We recall that $\Pi_{z_{\alpha}}$ is the operator from $L^{2}_{\rm comp} ( \R^{n} )$ to $L^{2}_{\rm loc} ( \R^{n} )$ defined by
\begin{equation*}
\Pi_{z_{\alpha}} = - \frac1{2i \pi} \oint_{\gamma} ( P -z)^{-1} d z ,
\end{equation*}
where $\gamma$ is a simple loop in the complex plane, oriented counterclockwise, such that $z_{\alpha}$ is the only resonance in the bounded domain delimited by $\gamma$.

\begin{theorem}[Asymptotic expansion for the spectral projection]\sl \label{a2}
Assume \ref{A1}--\ref{A3}. Let $\alpha \in \N^{n}$ be such that $z_{\alpha}^{0}$ is simple. Then, as operators from $L^{2}_{\rm comp} ( \R^{n} )$ to $L^{2}_{\rm loc} ( \R^{n} )$,
\begin{equation} \label{a23}
\Pi_{z_{\alpha}} = c ( \, \cdot \, , \overline{f} ) f ,
\end{equation}
where
\begin{equation} \label{a24}
c (h) = h^{- \vert \alpha \vert - \frac{n}{2}} \frac{e^{- i \frac{\pi}{2} ( \vert \alpha \vert + \frac{n}{2} )}}{\alpha ! (2 \pi)^{\frac{n}{2}}} \prod_{j=1}^{n} \lambda_{j}^{\alpha_{j} + \frac{1}{2}} ,
\end{equation}
and the function $f (x,h)$ satisfies the following properties:

$i)$ It is locally uniformly in $L^{2} ( \R^{n} )$: for all $\varphi \in C^{\infty}_{0} ( \R^{n})$,
\begin{equation*}
\Vert \varphi f \Vert_{L^{2} ( \R^{n})} \lesssim 1.
\end{equation*}

$ii)$ It satisfies the Schr\"{o}dinger equation:
\begin{equation*}
( P -z_{\alpha} ) f = 0 .
\end{equation*}

$iii)$ It is outgoing: there exists $R >0$ such that
\begin{equation*}
f =0 \text{ microlocally near each } (x,\xi) \text{ with } \vert x \vert >R, \text{ } \cos (x, \xi ) < -1/2 .
\end{equation*}

$iv)$ Finally, locally near $(0,0)$, we have
\begin{equation*}
f = d (x, h) e^{i \varphi_{+} (x) /h} ,
\end{equation*}
where $d (x,h) \in S_{h}^{0} (1)$ is a classical symbol satisfying
\begin{equation*}
d(x,h) \sim \sum_{j =0}^{+ \infty} d_{j} (x) h^{j} \quad \text{ and } \quad d_{0} (x) = x^{\alpha} + \CO ( x^{\vert \alpha \vert +1}) .
\end{equation*}
\end{theorem}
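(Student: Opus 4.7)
The plan is to build the resonant state $f$ explicitly by a WKB/BKW construction, identify the rank of $\Pi_{z_\alpha}$ from the resolvent estimate of Theorem \ref{aaa}, then pin down the normalization constant $c(h)$ by a stationary-phase calculation at the hyperbolic fixed point.

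\textbf{Step 1: Microlocal construction of $f$ near $(0,0)$.} I would start with the ansatz $f = d(x,h)\, e^{i\varphi_+(x)/h}$. Since $\varphi_+$ generates $\Lambda_+$, the eikonal equation $|\nabla\varphi_+|^2 + V = E_0$ is automatic from Section~\ref{k5}. Applying $P - z_\alpha$ and using $z_\alpha = E_0 - ih\sum_j(\alpha_j+\tfrac12)\lambda_j + \CO(h^2)$, the leading transport equation becomes
\begin{equation*}
2 \nabla\varphi_+\cdot\nabla d_0 + \Big(\Delta\varphi_+ - \sum_{j=1}^{n}\big(\alpha_j + \tfrac12\big)\lambda_j\Big)d_0 = 0.
\end{equation*}
At $x=0$ this reduces to $\sum_j \lambda_j x_j \partial_j d_0 = \big(\sum_j \alpha_j\lambda_j\big) d_0$, whose unique polynomial solution---under the simplicity of $z_\alpha^0$---is a multiple of $x^\alpha$. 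Fixing the leading coefficient to $1$, I solve the transport and higher-order transport equations recursively to obtain a classical symbol $d(x,h) \sim \sum_j d_j(x) h^j$ with $d_0 = x^\alpha + \CO(x^{|\alpha|+1})$.

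\textbf{Step 2: Global outgoing extension.} Next I propagate $f$ along the Hamilton flow on $\Lambda_+$, using the geometrical facts about trajectories near a hyperbolic fixed point that are collected in Appendix \ref{k6}. Since $\Lambda_+$ is the unstable manifold of $(0,0)$ and the trapped set is just $\{(0,0)\}$ by \ref{A3}, every bicharacteristic on $\Lambda_+$ exits any compact set in finite time. Gluing the local WKB construction to an outgoing parametrix outside $0$, I obtain a global $f$ satisfying $(P - z_\alpha)f = 0$, locally uniformly $L^2$, and outgoing in the sense of (iii). After complex distortion $f$ becomes $L^2$, and the polynomial resolvent bound of Theorem \ref{aaa} shows that any outgoing solution of $(P-z_\alpha)f = 0$ belongs to $\ker(P_\theta - z_\alpha)$, hence to $\ran \Pi_{z_\alpha}$.

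\textbf{Step 3: Rank-one structure.} Theorem \ref{e17} together with Remark \ref{k1} gives that the simplicity of $z_\alpha^0$ implies simplicity of $z_\alpha$ for $h$ small. Combined with Theorem \ref{aaa}, this forces $\rank \Pi_{z_\alpha} = 1$, so $\Pi_{z_\alpha} = L(\cdot) f$ for some continuous linear functional $L$ on $L^2_{\rm comp}$. Since $P$ is symmetric for the bilinear pairing $\langle u , v \rangle = \int u v\, dx$ (because $V$ is real), the transpose projection $\Pi_{z_\alpha}^t$ coincides with $\Pi_{z_\alpha}$, and $f$ is also the generator of $\ran \Pi_{z_\alpha}^t$. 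This forces $L(u) = c\, \langle u , f \rangle = c\, (u, \overline{f})$ for some scalar $c = c(h)$, which is \eqref{a23}.

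\textbf{Step 4: Computation of $c(h)$.} The normalization is fixed by $\Pi_{z_\alpha}^2 = \Pi_{z_\alpha}$, which gives $c^{-1} = \langle f , f \rangle = \int f^2\, dx$, the integral being understood in the analytically distorted sense. The leading WKB form $f \approx x^\alpha e^{i\varphi_+/h}$ localizes the integrand near $x=0$, where $2\varphi_+(x) = \tfrac12\sum_j \lambda_j x_j^2 + \CO(x^3)$. Stationary-phase expansion at the unique critical point $x=0$ reduces the computation to products of one-dimensional integrals $\int x_j^{2\alpha_j} e^{i\lambda_j x_j^2/(2h)}\, dx_j$, whose analytic evaluation produces the factors $h^{\alpha_j + 1/2}$, the phase $e^{-i\pi(\alpha_j+1/2)/2}$, and $\lambda_j^{\alpha_j+1/2}$ appearing in \eqref{a24}. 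Collecting all factors, together with the combinatorial contributions $\alpha!$ and $(2\pi)^{n/2}$ from the Hermite-type normalization consistent with the choice $d_0 = x^\alpha$, gives the announced $c(h)$.

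\textbf{Main obstacle.} The hard part is Step~4: the contour $\gamma$ defining $\Pi_{z_\alpha}$ must pass within distance $\sim h$ of possibly many other resonances in $D(E_0, Ch)$, and only the polynomial bound of Theorem \ref{aaa} allows one to extract the residue cleanly. In addition, the stationary-phase asymptotics must be controlled to all orders (not only leading order) to match the \emph{classical} symbol expansion claimed in (iv), which requires careful bookkeeping of the transport equations from Step~1 against the subprincipal WKB corrections in Step~4. The geometrical propagation of the WKB solution from the hyperbolic fixed point along $\Lambda_+$---in particular the identification of the appropriate branch of $\varphi_+$ globally on $\Lambda_+$ and the Maslov-type phase shifts---is the other substantial technical ingredient and is precisely what is prepared in Appendix \ref{k6}.
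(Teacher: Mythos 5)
Your overall plan --- construct $f$ directly by WKB on $\Lambda_{+}$, deduce $\rank \Pi_{z_{\alpha}} = 1$ from simplicity, and then normalize via $c^{-1} = \int f^{2}\,dx$ --- is precisely the ``alternative approach'' the paper discusses in the commentary following Theorem \ref{a2}, and the paper explains there why it runs into a serious obstacle that your Step~4 underestimates.

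The difficulty is not merely that one needs ``careful bookkeeping to all orders.'' It is that the \emph{leading} term of $c(h)$ cannot be extracted from $d_{0}$ alone. Since $d_{0}$ vanishes to order $\vert\alpha\vert$ at $x=0$, the contribution of $d_{0}^{2}$ to the stationary phase expansion of $\int d(x,h)^{2} e^{2 i \varphi_{+}(x)/h}\,dx$ only begins at order $h^{n/2 + \vert\alpha\vert}$, obtained by hitting $d_{0}^{2}$ with $2\vert\alpha\vert$ derivatives. But the transport hierarchy forces $d_{1}$ to vanish to order $\vert\alpha\vert - 2$, $d_{2}$ to order $\vert\alpha\vert-4$, and so on, so the terms $h^{j} d_{0} d_{j}$ and $h^{j+k} d_{j} d_{k}$ contribute at the \emph{same} power $h^{n/2 + \vert\alpha\vert}$ for $j,k$ up to roughly $\vert\alpha\vert/2$; likewise the cubic and higher Taylor coefficients of $\varphi_{+}$ feed into the same order. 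One can check this concretely: keeping only $d_{0}=x^{\alpha}$ and the quadratic part $\tfrac14\sum\lambda_j x_j^2$ of $\varphi_{+}$, the products of one-dimensional integrals you write down give $\prod_{j}(2\alpha_{j}-1)!!$ in the denominator of $c(h)$, whereas \eqref{a24} has $\alpha! = \prod_{j}\alpha_{j}!$. These disagree already for $\alpha_{j}=2$. So Step~4 as written does not just omit details --- it produces the wrong leading coefficient, and repairing it means explicitly solving $O(\vert\alpha\vert)$ orders of the transport hierarchy, which is a substantial piece of work that you have not carried out. There is also a secondary issue you do not address: $\int f^{2}$ must be interpreted after distortion, since $\vert f \vert$ does not decay.

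The paper sidesteps this entirely by never pairing $f$ against itself at the critical point. Instead, it chooses a WKB ``test function'' $v = [P,\tau]u$ microlocalized near an incoming Hamiltonian curve $\gamma^{-} \subset \Lambda_{-}$ (Section \ref{a34}), shows via Lemmas \ref{a4} and \ref{a6} that $w = (P_{\theta}-z)^{-1}v$ equals $u$ on $\gamma^{-}$ before the fixed point and vanishes elsewhere incoming, invokes the microlocal connection formula of \cite{BoFuRaZe07_01} to represent $w$ near $(0,0)$, and extracts the residue by integrating in $z$ (Section \ref{k9}). The constant is then obtained from the scalar product $(v(z_{\alpha}),\overline{\widetilde f})$ in Section \ref{a53}: this is a stationary phase transverse to $\gamma^{-}$, at points away from $0$ where the amplitude does not vanish, followed by a direct (not stationary) $t$-integration because $\psi(x(t)) - \varphi_{-}(x(t)) \equiv \psi(0)$ is constant along the curve. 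Only leading symbols appear, and the $\alpha!$ emerges correctly from the combinatorics in $A_{-}$ inherited from \cite{BoFuRaZe07_01}. This is the content your Step~4 is missing.
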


We prove this result the following way. Using \cite{BoFuRaZe07_01}, we compute $(P -z)^{-1} v$ for some well prepared WKB function $v$ and $z$ on a loop around the resonance $z_{\alpha}$. Integrating with respect to $z$, we get $\Pi_{z_{\alpha}} v$ and thus the resonant state $f$. To finish the proof, we obtain the constant $c$ computing $( v , \overline{f} )$ by a stationary phase argument.

\begin{remark}\sl
$i)$ Since $f$ is not necessarily in ${\mathcal S} ' ( \R^{n} )$, saying ``$f =0$ microlocally near $\rho_{0}$'' means that there exists $\phi \in C^{\infty}_{0} ( \R^{2n})$ with $\phi ( \rho_{0} ) \neq 0$ such that, for every $\chi \in C^{\infty}_{0} (\R^{n})$, $\Op ( \phi ) ( \chi f ) = \CO ( h^{\infty} )$ in $L^{2}( \R^{n})$.

$ii)$ The properties $i)$--$iv)$ of Theorem \ref{a2} characterize uniquely the resonant state $f (x,h)$ modulo $\CO (h^{\infty})$. In particular, the usual propagation of singularities implies that this function is a classical Lagrangian distribution of order $0$ with Lagrangian manifold $\Lambda_{+}$.
\end{remark}

For the punctual well in the island situation, the generalized spectral projection has been computed by Helffer and Sj\"{o}strand \cite{HeSj86_01}. In particular, they have proved that this operator is almost orthogonal. Indeed, if the resonance $z$ is isolated and the cut-off $\chi \in C^{\infty}_{0} ( \R^{n} )$ is equal to $1$ near the well, then $\chi \Pi_{z} \chi$ is exponentially close to the spectral projection associated to the Dirichlet problem in the well and $\Vert \chi \Pi_{z} \chi \Vert = 1 + \CO (e^{- \delta /h})$ for some $\delta > 0$. The situation is very different in the present setting since, for $\chi \neq 0$, $\Vert \chi \Pi_{z_{\alpha}} \chi \Vert$ is of order $h^{- \vert \alpha \vert - \frac{n}{2}}$.

From the previous discussion, the polynomial upper bound on the resolvent proved in Theorem~\ref{aaa} occurs effectively. More precisely, in every disc $D (z_{\alpha} , \varepsilon h )$, with $\varepsilon >0$, the cut-off resolvent can not be bounded by anything smaller than $\frac{h^{- C_{\alpha}}}{\vert z - z_{\alpha} \vert}$ for some $C_{\alpha}>0$. Moreover, since $C_{\alpha} \geq \vert \alpha \vert + \frac{n}{2}$, this constant can not be taken uniformly with respect to $z_{\alpha}$.

One may perhaps prove Theorem \ref{a2} with other methods than the one we use here. In the one dimensional case, the resolvent can be written in term of a basis of solutions of $(P-z) u =0$ and of their Wronskian. Thus, it must be possible to use the results of \cite{Ra96_01} in which the scattering amplitude, which can be expressed through the Wronskians of the Jost solutions, has been computed. In any dimension, another approach is perhaps also possible. One may first try to calculate the resonant state $f$ with various methods (using, for example, the works of Briet, Combes and Duclos \cite{BrCoDu87_01}, Sj\"{o}strand \cite{Sj87_01} or Hassell, Melrose and Vasy \cite{HaMeVa08_01}). It then remains to calculate the constant $c$. This question is equivalent to the calculation of the scalar product $( f , \overline{f} ) = \int f^{2}$. If we neglect the problems of integration at infinity, this calculation is reduced to a problem of stationary phase at point $0$. But, since $f^{2}$ vanishes to order $2 \vert \alpha \vert$, the knowledge of $d_{0}$ is not enough and we must explicitly know the $\vert \alpha \vert$ first terms in the expansion of $f$ in powers of $h$. In this computation, the situation becomes, in a sense, similar to that of the eigenvectors of the harmonic oscillator for which the ``good variable'' is $\frac{x}{\sqrt{h}}$. However, this is not the case in Theorem \ref{a2} since the factor $e^{i \varphi_{+} (x) /h}$ in $f$ has modulus $1$.

It may be possible to obtain some results when $z_{\alpha}^{0}$ is not simple. In that case, various situations may occur: several resonances can be very close to each other, the resonances can have a non-trivial multiplicity and they can be multiple poles of the resolvent. We refer to \cite[Section 4]{Sj87_01} where such phenomena are shown. In the remainder of this discussion, we consider the simplest case where a double resonance can appear. We assume that $\lambda_{1} = \lambda_{2} < \lambda_{3}$ and that $\widetilde{z} = b_{h} ( z_{(1,0, \ldots)} ) = b_{h} ( z_{(0,1,0, \ldots )})$ is a double resonance. Then, near $\widetilde{z}$, the resolvent can be written
\begin{equation*}
( z - P)^{-1} = \frac{\Pi_{2}}{(z - \widetilde{z})^{2}} + \frac{\Pi_{1}}{z - \widetilde{z}} + H (z),
\end{equation*}
where $H$ is holomorphic near $\widetilde{z}$. In that case, $\rank \Pi_{2} \leq 1$ and $\rank \Pi_{1} = 2$. It seems possible to calculate $\Pi_{1}$ with a proof similar to that of Theorem \ref{a2}. Using Proposition~\ref{d10}, we can construct two initial data $v_{1} , v_{2}$ such that the microsupport of $v_{j}$ and $\Lambda_{-}$ intersect along a Hamiltonian curve which goes to $0$ along the $j$-th vector basis. Then, computing the residue of $( z-P)^{-1} v_{j}$, we obtain that $\Pi_{1} ( v_{j} + ( P - \widetilde{z} ) \partial_{z} v_{j} )$ is of the form $f_{j} = x_{j} e^{i \varphi_{+} (x) /h}$ modulo a constant. In the following, we can neglect $( P - \widetilde{z} ) \partial_{z} v_{j}$ as it gives lower order terms. Since $f_{1}$ and $f_{2}$ can not be collinear, $\{ f_{1} , f_{2} \}$ (resp. $\{ \overline{f_{1}} , \overline{f_{2}} \}$) forms a basis of $\im \Pi_{1}$ (resp. $\im \Pi_{1}^{*}$). To finish the computation of $\Pi_{1}$, it is sufficient to calculate $( v_{j} , \overline{f_{k}} )$. The scalar products $( v_{j} , \overline{f_{j}} )$ can be calculated as in the proof of Theorem \ref{a2}. But, according to the choice of the $v_{j}$'s and to the form of the $f_{k}$'s, $( v_{j} , \overline{f_{k}} )$ appears to be smaller when $j \neq k$. Eventually, in the $\{ f_{1} , f_{2} \}$  and $\{ \overline{f_{1}} , \overline{f_{2}} \}$ bases, the operator $\Pi_{1}$ seems to be a $2 \times 2$-matrix whose diagonal coefficients are given by \eqref{a24} at the first order and whose off-diagonal coefficients are of lower order. One can probably also say something about $\Pi_{2}$. But, one may need to calculate several lower order terms in the semiclassical expansions (for the resonance for example). This operator seems to have a smaller norm.

\Subsection{Construction of ``test functions''}
\label{a34}

To prove the theorem, it is enough to show that
\begin{equation}
\chi \Pi_{z_{\alpha}} \chi = c ( \, \cdot \, , \overline{\chi f} ) \chi f ,
\end{equation}
for $\chi \in C^{\infty}_{0} ( \R^{n} )$. Let $\Pi_{z_{\alpha} , \theta}$ be the spectral projection of $P_{\theta}$ at the resonance $z_{\alpha}$. It is the operator on $L^{2} ( \R^{n} )$ defined by
\begin{equation} \label{a14}
\Pi_{z_{\alpha} , \theta} = - \frac1{2i \pi} \oint_{\gamma} ( P_{\theta} -z)^{-1} d z ,
\end{equation}
We now assume that the distortion occurs outside of the support of $\chi$. In particular, $\chi \Pi_{z_{\alpha}} \chi = \chi \Pi_{z_{\alpha} , \theta} \chi$. Let $J$ be the anti-linear operator on $L^{2} ( \R^{n})$ defined by
\begin{equation*}
\begin{aligned}
J: \\
{}^{}
\end{aligned}
\left\{
\begin{gathered}
L^{2} ( \R^{n} )  \\
u
\end{gathered}
\begin{gathered}
\longrightarrow  \\
{}^{}
\end{gathered}
\begin{gathered}
L^{2} ( \R^{n} )  \\
\overline{u} .
\end{gathered} \right.
\end{equation*}
Since $P$ is a Schr\"{o}dinger operator with a real potential, $J P = P J$ and a direct calculation gives $( P_{\theta} -z)^{-1} = J \big( ( P_{\theta} -z)^{-1} \big)^{*} J$. Thus, $\Pi_{z_{\alpha} , \theta}$ can be written $\Pi_{z_{\alpha} , \theta} = ( \, \cdot \, , \overline{g_{\theta}} ) g_{\theta}$ with $g_{\theta} \in L^{2} ( \R^{n})$. The same way, $\Pi_{z_{\alpha}} = ( \, \cdot \, , \overline{g} ) g$ for some $g \in L^{2}_{\rm loc} ( \R^{n} )$. Moreover, from \cite{SjZw91_01}, we can always assume that $g_{\theta} = U_{\theta} g$. In particular, $\chi g_{\theta} = \chi g$.

Since $z_{\alpha}^{0}$ is simple, for all $j \in \{1 \} \cup \supp \alpha$ (where $\supp \alpha = \{ j \in \N ; \ \alpha_{j} \neq 0 \}$), $\lambda_{j} = \lambda \cdot \beta$ with $\beta \in \N^{n}$ implies $\vert \beta \vert =1$. Then, from Lemma \ref{d3} and Proposition \ref{d10}, there exists a Hamiltonian curve $\gamma^{-} = (x(t) , \xi (t) ) \subset \Lambda_{-}$ such that, for all $j \in \{1 \} \cup \supp \alpha$, we have $\gamma_{\lambda_{j}}^{-} = \gamma_{\lambda_{j} ,0}^{-} \neq 0$.

We now construct the ``test functions'', supported microlocally near the ``test curve'' $\gamma^{-}$, on which we will evaluate the spectral projection. Let $u (x , z , h)$ be a function defined in a vicinity of $0$ but not at $0$. We assume that $u$ is a WKB solution of $(P -z) u =0$. More precisely, near the $x$-projection of $\gamma^{-} \setminus \{ 0 \}$, we have
\begin{equation} \label{a26}
u (x, z,h) = b (x , z,h) e^{i \psi ( x) /h} .
\end{equation}
Here $\psi$ is a $C^{\infty}$ function solving the eikonal equation $\vert \nabla \psi \vert^{2} + V (x) = E_{0}$. We assume that $\Lambda_{\psi} = \{ ( x, \nabla \psi (x) ) \}$ intersects transversely $\Lambda_{-}$ along $\gamma^{-}$. Note that the construction of such a phase, whose associated Lagrangian manifold projects nicely on the $x$-space in a vicinity of $\gamma^{-}$, can always be done thanks to \cite[Proposition C.1]{AlBoRa08_01}. The symbol $b (x , z ,h)$ is classical: for all $N\in \N$,
\begin{equation*}
b (x , z,h) = \sum_{j=0}^{N} b_{j} (x ,z) h^{j} + \CO (h^{N+1}) ,
\end{equation*}
uniformly for $z \in D ( E_{0} , C_{0} h)$. Moreover, $b$ and the $b_{j}$'s are $C^{\infty}$ with respect to $x$ and analytic with respect to $z \in D ( E_{0} , C_{0} h)$. Finally, we assume that $u$ satisfies
\begin{equation*}
( P -z ) u = \CO ( h^{\infty} ),
\end{equation*}
and $b_{0} ( x,z) \neq 0$ near the $x$-projection of $\gamma^{-}$. For that, it is enough to solve the usual transport equations. Finally, we suppose that $u=0$ outside a neighborhood of the spacial projection of $\gamma^{-}$. Then, we set
\begin{equation} \label{a25}
v = [P , \tau ] u ,
\end{equation}
where $\tau \in C^{\infty}_{0} ( \R^{n} )$ with $\supp \tau$ close to $0$ and $\tau =1$ near $0$. We consider
\begin{equation} \label{a12}
w = ( P_{\theta} - z)^{-1} v .
\end{equation}
In all the proof of Theorem \ref{a2}, we will work with $z$ in a ring ${\mathcal R}_{h} = D ( z_{\alpha}^{0} , C_{2} h) \setminus D ( z_{\alpha}^{0} , C_{1} h)$ such that $z_{\alpha}^{0}$ is the unique element of $\Res_{0} (P)$ in $D ( z_{\alpha}^{0} , C_{2} h)$. Note that Theorem \ref{aaa} implies that $\Vert w \Vert_{H^{2} ( \R^{n} )} \lesssim h^{-C}$ uniformly for $z \in {\mathcal R}_{h}$, for some $C >0$.

\Subsection{Calculation of $w$ before the critical point}

We begin the proof by showing that $w$ is $0$ in the incoming region. More precisely, we have

\begin{lemma}\sl \label{a4}
Let $\rho \in \R^{2n}$ be such that $\rho \notin \Lambda_{+}$ and $\exp ( ]- \infty , 0 ] H_{p}) ( \rho )$ does not meet the microsupport of $v$. Then, $w =0$ microlocally near $\rho$, uniformly in $z \in {\mathcal R}_{h}$.
\end{lemma}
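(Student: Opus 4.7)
\smallskip

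\noindent\textbf{Proof plan.} The argument is a propagation-of-singularities estimate for the distorted resolvent, combined with ellipticity of $P_\theta-z$ at infinity. The plan is to locate a point $\rho_0$ on the backward trajectory of $\rho$ where $P_\theta - z$ is microlocally elliptic, deduce $w = \CO(h^\infty)$ microlocally near $\rho_0$, and then transport this smallness along the bicharacteristic back to $\rho$.

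First I would dispose of the trivial case $p(\rho)\neq E_0$: for $z\in\CR_h$ one has $\re z$ close to $E_0$, so $P_\theta-z$ is elliptic at $\rho$ in $\Psi_h^0(\<\xi\>^2)$. The hypothesis at $t=0$ gives $\rho\notin\MS(v)$, and standard elliptic regularity together with the polynomial bound $\Vert w\Vert_{H^2}\lesssim h^{-C}$ of Theorem~\ref{aaa} yields $\rho\notin\MS(w)$. It thus remains to handle $\rho\in p^{-1}(E_0)\setminus\Lambda_+$. By the definition of $\Lambda_+$, the backward orbit $\exp(tH_p)(\rho)$ does not converge to $(0,0)$ as $t\to-\infty$. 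Assumption \ref{A3} states that $(0,0)$ is the only trapped point at energy $E_0$, so the orbit must be unbounded; hence there exists $T<0$ with $\vert x(T)\vert\geq R_1$. At $\rho_0=\exp(TH_p)(\rho)$, inequality \eqref{e23} gives $q(\rho_0)\geq E_0$, so
\begin{equation*}
\im p_\theta(\rho_0) = -\theta q(\rho_0) + \CO(\theta^2) \leq -\theta E_0/2 < 0 .
\end{equation*}
Since $\theta=\nu h\vert\ln h\vert$ dominates $\vert\im z\vert=\CO(h)$ for $z\in\CR_h$, the symbol $p_\theta-z$ is elliptic in a neighborhood $U_0$ of $\rho_0$.

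By the microsupport hypothesis on $v$, we have $\MS(v)\cap U_0=\emptyset$. Applying elliptic regularity to the equation $(P_\theta-z)w=v$, and using the a priori bound $\Vert w\Vert_{L^2}\lesssim h^{-C}$ from Theorem~\ref{aaa} to absorb the polynomial loss, we obtain $\Op(a)w=\CO(h^\infty)$ in $L^2$ for any $a\in C_0^\infty(U_0)$, uniformly in $z\in\CR_h$. The arc $\Gamma=\{\exp(tH_p)(\rho):T\leq t\leq 0\}$ is compact and, by hypothesis, disjoint from $\MS(v)$. Standard semiclassical propagation of singularities for $P_\theta$ then transports the microlocal smallness from $\rho_0$ to $\rho$: the sign of $\im p_\theta$ along $\Gamma$ (non-positive, and vanishing wherever the distortion is inactive) is the favorable one for forward propagation of outgoing data, which is exactly the mechanism exploited in Section~\ref{k4} and in \cite{BoFuRaZe07_01}. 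This yields $\Op(b)w=\CO(h^\infty)$ for some $b\in C_0^\infty$ with $b(\rho)\neq 0$, which is the claim.

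The main technical point will be the standard one in this subject: the principal symbol $p_\theta$ is not real, so the bicharacteristic geometry on which the propagation hinges must be read from $\re p_\theta$ (close to $p$), while the non-positivity of $\im p_\theta$ must be turned into a Gronwall-type gain via a positive-commutator argument based on a suitable escape function along $\Gamma$, exactly analogous to the construction of $g_0$ in \eqref{g0}. The uniformity in $z\in\CR_h$ is automatic, since both the ellipticity of $p_\theta-z$ at $\rho_0$ and the resolvent bound of Theorem~\ref{aaa} depend only on $\vert z-E_0\vert=\CO(h)$.
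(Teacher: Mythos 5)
Your overall geometry is right: escape from $K(E_0)=\{(0,0)\}$ forces the backward orbit of any $\rho\in p^{-1}(E_0)\setminus\Lambda_+$ out to $\{\vert x\vert>R_1\}$, the distortion is active there, and the goal is to pull microlocal smallness forward to $\rho$. The off-shell case is handled the same way as in the paper. But the crucial step — establishing $\Op(a)w=\CO(h^\infty)$ near a single finite point $\rho_0=\exp(TH_p)(\rho)$ with $\vert x(T)\vert\geq R_1$ by ``elliptic regularity'' — does not work, and this is a genuine gap.

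The problem is that $\rho_0$ still lies on the characteristic variety in the relevant sense. Since $\rho_0\in p^{-1}(E_0)$ and $\re z=E_0+\CO(h)$ on $\CR_h$, one has $\re(p_\theta-z)(\rho_0)=\CO(h)$, while by \eqref{e23} and \eqref{g2} the imaginary part satisfies $\im(p_\theta-z)(\rho_0)\leq -\theta E_0/2+\CO(h)\sim -h\vert\ln h\vert$. Thus $\vert p_\theta-z\vert\sim h\vert\ln h\vert$ near $\rho_0$ — it is \emph{not} bounded below by a constant. The symbol $(p_\theta-z)^{-1}$ is therefore in a class $S_h^\gamma$ with $\gamma\to 1$, and the parametrix composition loses: the first error term is of size $h\cdot\partial\bigl((p_\theta-z)^{-1}\bigr)\cdot\partial(p_\theta-z)=\CO\bigl(h^{-1}\vert\ln h\vert^{-2}\bigr)$, which blows up. You cannot iterate it down to $\CO(h^\infty)$, and pairing with the a priori bound $\Vert w\Vert\lesssim h^{-C}$ only makes this worse. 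The dissipation $\im(p_\theta-z)<0$ at $\rho_0$ makes the operator absorbing there, not elliptic; by itself it gives a resolvent bound of size $(h\vert\ln h\vert)^{-1}$, nothing like $\CO(h^\infty)$.

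The paper avoids this entirely: it does not try to get smallness at any single point. It constructs an escape function $\omega\in S_h^0(1)$ with $\omega=1$ near $\rho$, $H_p\omega\leq 0$ everywhere, $\exp(]-\infty,0]H_p)(\supp\omega)\cap\MS(v)=\emptyset$, and $\exp(-TH_p)(\supp\omega)\subset\Gamma^-(R,d,\sigma)$, the far incoming region (note $\supp\omega$ is not compact: monotonicity forces $\omega\geq 1$ along the whole backward ray). Then it runs the positive-commutator argument of \cite[Theorem~2]{BoMi04_01} with this $\omega$. The positivity comes jointly from $-H_p\omega\geq 0$ everywhere and the absorption $\im p_\theta\leq -\theta E_0/2$ in the incoming region — neither ingredient alone is enough, and the commutant must reach out to infinity in the incoming direction to control the remainder terms. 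Your last paragraph recognizes that a positive-commutator argument is needed, but you locate it as a means to propagate along the \emph{compact} arc $\Gamma$ from $\rho_0$ to $\rho$; the real issue is the starting condition, and that requires the escape function to go out to $\Gamma^-(R,d,\sigma)$ as in the paper. In short, your two-step decomposition (pointwise ellipticity, then propagation) must be replaced by the single global positive-commutator estimate.
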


\begin{proof}
This lemma can be proved as Theorem 2 of \cite{BoMi04_01}. First, assume $\rho \notin p^{-1} ( E_{0})$. Using the elliptic equation $(P_{\theta} -z) w = v$, the norm estimates $\Vert v \Vert , \Vert w \Vert \lesssim h^{-C}$ and the condition $\rho \notin \MS (v)$, the standard pseudodifferential calculus implies that $\rho \notin \MS (w)$. More precisely, for all $f \in C^{\infty}_{0} ( \R )$ with $f = 1$ near $E_{0}$, we have
\begin{equation} \label{a5}
(1 - f (P) ) w = \CO ( h^{\infty} ) ,
\end{equation}
uniformly in $z \in {\mathcal R}_{h}$.

Assume now that $\rho \in p^{-1} (E_{0} )$. From the hypotheses, the half-curve $\exp (t H_{p}) ( \rho )$, $t \leq 0$, does not meet $\MS ( v)$ and goes to $\infty$ as $t \to - \infty$. Then, one can find a symbol $\omega \in S_{h}^{0} (1)$ such that $\omega =1$ near $\rho$, $H_{p} \omega \leq 0$, $\exp ( ]- \infty , 0 ] H_{p}) ( \supp \omega  )$ does not meet $\MS (v)$ and $\exp (-T H_{p} ) ( \supp \omega ) \subset \Gamma^{-} (R , d , \sigma)$ for some $T, R \gg 1$, $d >0$ and $\sigma <0$. Here, $\Gamma^{-} (R , d , \sigma) = \{ (x , \xi ) \in T^{*} \R^{n} ; \ \vert x \vert > R, \ d^{-1} < \vert \xi \vert < d \text{ and } \cos (x , \xi ) \leq \sigma \}$. Then, mimicking the proof of \cite[Theorem 2]{BoMi04_01}, we get $\Op ( \omega ) w = \CO ( h^{\infty} )$, uniformly in $z \in {\mathcal R}_{h}$. The unique difference with its proof is that the $0$ in the left hand side of \cite[(3.4)]{BoMi04_01} is replaced by $\CO (h^{\infty})$ (here, we use that $\supp \omega \cap \MS (v) = \emptyset$).
\end{proof}

We will now calculate $w$ on $\Lambda_{-}$ near $0$. First, using $\MS (v) \cap \Lambda_{-} \subset \gamma^{-}$, the previous lemma implies the following consequence.

\begin{remark}\sl \label{a7}
We have $w = 0$ microlocally near each point of $\Lambda_{-} \setminus \gamma^{-}$.
\end{remark}

On the other hand, near $\gamma^{-}$, we have the following lemma. Note that the results of this lemma and of Remark \ref{a7} are uniform for $z \in {\mathcal R}_{h}$.

\begin{lemma}\sl \label{a6}
Let $\rho \in \gamma^{-}$ be a point close enough to $0$. Then, $w = u$ microlocally near $\rho$.
\end{lemma}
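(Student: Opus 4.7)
The plan is to compare $w = (P_\theta - z)^{-1} v$ with the truncated WKB state $\widetilde{u} := \tau u$, and to transport the resulting identity along the bicharacteristic $\gamma^{-}$ via semiclassical propagation of singularities. I will arrange $\supp \tau$ to be a small neighborhood of $0$ contained in the undistorted ball $\{\vert x \vert < R_0\}$, so that $P_\theta = P$ on $\supp \tau$. Then $\widetilde{u}$ is defined and smooth on $\supp \tau \setminus \{0\}$, and since $(P - z) u = \CO(h^\infty)$ in the WKB region, a direct computation yields
\begin{equation*}
(P_\theta - z) \widetilde{u} = \tau (P - z) u + [P, \tau] u = v + \CO(h^\infty),
\end{equation*}
microlocally near $\gamma^{-} \setminus \{(0,0)\}$. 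Combined with the defining identity $(P_\theta - z) w = v$, the residual $r := w - \widetilde{u}$ thus satisfies $(P_\theta - z) r = \CO(h^\infty)$ microlocally on $\gamma^{-} \setminus \{(0,0)\}$.

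Next, I will produce an ``initial'' point $\rho_0 \in \gamma^{-}$ at which $r$ vanishes microlocally. Pick $\rho_0$ on the portion of $\gamma^{-}$ just outside $\supp \tau$ but still inside $\{\vert x \vert < R_0\}$; this is possible because $\gamma^{-}$ joins $(0,0)$ to infinity, crossing every $\vert x \vert$-level. At such a $\rho_0$, we have $\widetilde{u} = 0$ microlocally. Moreover $\rho_0 \notin \Lambda_{+}$, and the $x$-projection of $\MS(v)$ is contained in $\supp \nabla \tau$; by shrinking $\supp \tau$ if necessary and using that $\gamma^{-}$ escapes to infinity under the backward flow, the past trajectory $\exp(]-\infty, 0] H_p)(\rho_0) \subset \gamma^{-}$ can be arranged to avoid $\supp \nabla \tau$. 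Lemma~\ref{a4} therefore yields $w = 0$ microlocally near $\rho_0$, hence $r = 0$ there.

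Finally, I will apply semiclassical propagation of singularities to $r$ along the arc of $\gamma^{-}$ joining $\rho_0$ to $\rho$. This arc lies inside the undistorted ball, since $\gamma^{-}$ flows monotonically into $(0,0)$ on the linearized neighborhood of the hyperbolic fixed point; hence the principal symbol of $P_\theta - z$ there agrees with $p - \re z$, whose Hamilton field is tangent to $\gamma^{-}$. The imaginary contributions coming from $\im z = \CO(h)$ and from the subprincipal symbol are of order $h$ and do not affect propagation of microsupport. The standard propagation theorem thus yields $r = 0$ microlocally near $\rho$. Since $\tau \equiv 1$ in a neighborhood of the base point of $\rho$, we have $\widetilde{u} = u$ microlocally near $\rho$, and therefore $w = u$ microlocally near $\rho$, as required. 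The main technical care lies in the geometry: one must simultaneously arrange that the propagation arc from $\rho_0$ to $\rho$ stays inside the undistorted region and that the past trajectory of $\rho_0$ avoids $\supp \nabla \tau$, both of which can be achieved thanks to the hyperbolic structure of $H_p$ at $(0,0)$ and the non-trapping behavior of $\Lambda_{-}$ at infinity.
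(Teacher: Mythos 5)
Your plan — compare $w$ with the truncated WKB state $\widetilde u=\tau u$, set $r=w-\widetilde u$, and propagate $r$ along an arc of $\gamma^-$ from an initial point $\rho_0$ where both terms vanish — is close in outline to the paper's argument, but there is a genuine gap you do not address, and it is precisely the step you treat as routine.

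The semiclassical propagation-of-singularities theorem you invoke requires the distribution to be tempered, i.e.\ $\Vert r\Vert_{L^2}\lesssim h^{-N}$. But $\tau u$ is \emph{not} tempered: the amplitude $b_0$ of the WKB ansatz $u=b\,e^{i\psi/h}$ solves the transport equation along $\gamma^-$, and formula \eqref{a43} together with $\Delta\psi(0)=\sum\lambda_j/2-\lambda_1$ gives $b_0(x(t))\sim e^{(\lambda_1+\lambda\cdot\alpha)t}$ as $t\to+\infty$, that is $b_0(x)\sim|x|^{-1-\lambda\cdot\alpha/\lambda_1}$ near $x=0$. Since $\tau\equiv1$ near $0$, the function $\tau u$ blows up there and is generically not even in $L^2$; so the standard propagation estimate simply does not apply to $r=w-\tau u$, and the conclusion ``$r=0$ near $\rho$'' is unjustified. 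This is not a removable technicality: the growth of $u$ at the fixed point is exactly the mechanism producing the large spectral projection $\Pi_{z_\alpha}\sim h^{-|\alpha|-n/2}$, so it cannot be waved away.

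The paper avoids this by introducing the auxiliary function $\widetilde w=\frac{i}{h}\int_0^T e^{-it(P-z)/h}[P,\tau]u\,dt$, which is \emph{automatically} in $L^2$ with polynomial norm bound, being a finite-time Duhamel integral of the unitary group applied to the nice function $v=[P,\tau]u$. It then shows $w=\widetilde w$ near $\rho$ by propagation applied to the genuinely tempered difference $w-\widetilde w$, whose source $(P_\theta-z)(w-\widetilde w)=e^{-iT(P-z)/h}[P,\tau]u$ is, by Egorov and the choice of $T$, microsupported away from the backward trajectory of $\rho$; finally it computes $\widetilde w=u$ near $\rho$ by a telescoping evaluation of the Duhamel integral. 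Your proposal could likely be salvaged by replacing $\tau u$ with $\tau_1\tau u$ for a further cutoff $\tau_1$ vanishing near $0$ and equal to $1$ on a neighborhood of the arc, so that $\tau_1\tau u$ is bounded, $(P_\theta-z)(w-\tau_1\tau u)=(1-\tau_1)v-[P,\tau_1]\tau u+\CO(h^\infty)$ vanishes microlocally on the arc, and propagation applies — but that extra localization and the verification that the new commutator term does not obstruct the propagation are exactly the steps your write-up omits.
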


\begin{figure}%[!h]
\begin{center}
\begin{picture}(0,0)%
\includegraphics{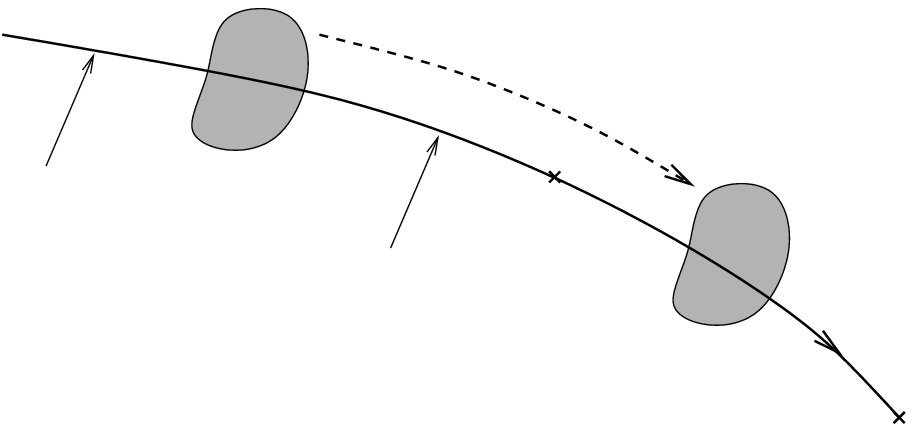}%
\end{picture}%
\setlength{\unitlength}{1381sp}%
\begingroup\makeatletter\ifx\SetFigFont\undefined%
\gdef\SetFigFont#1#2#3#4#5{%
  \reset@font\fontsize{#1}{#2pt}%
  \fontfamily{#3}\fontseries{#4}\fontshape{#5}%
  \selectfont}%
\fi\endgroup%
\begin{picture}(12573,5760)(-1457,-5666)
\put(-1424,-2461){\makebox(0,0)[lb]{\smash{{\SetFigFont{10}{12.0}{\rmdefault}{\mddefault}{\updefault}$w=0$}}}}
\put(1126,-2461){\makebox(0,0)[lb]{\smash{{\SetFigFont{10}{12.0}{\rmdefault}{\mddefault}{\updefault}$\MS (v)$}}}}
\put(5551,-886){\makebox(0,0)[lb]{\smash{{\SetFigFont{10}{12.0}{\rmdefault}{\mddefault}{\updefault}$\exp ( T H_{p} )$}}}}
\put(3151,-3586){\makebox(0,0)[lb]{\smash{{\SetFigFont{10}{12.0}{\rmdefault}{\mddefault}{\updefault}$w=u$}}}}
\put(6376,-2161){\makebox(0,0)[lb]{\smash{{\SetFigFont{10}{12.0}{\rmdefault}{\mddefault}{\updefault}$\rho$}}}}
\put(9676,-3136){\makebox(0,0)[lb]{\smash{{\SetFigFont{10}{12.0}{\rmdefault}{\mddefault}{\updefault}${\rm MS} (\exp ( T H_{p} ) ( v))$}}}}
\put(10126,-4411){\makebox(0,0)[lb]{\smash{{\SetFigFont{10}{12.0}{\rmdefault}{\mddefault}{\updefault}$\gamma{-}$}}}}
\put(11101,-5611){\makebox(0,0)[lb]{\smash{{\SetFigFont{10}{12.0}{\rmdefault}{\mddefault}{\updefault}$0$}}}}
\end{picture}%
\end{center}
\caption{The geometrical setting of Lemma \ref{a6}.}
\label{f1}
\end{figure}

\begin{proof}
We define
\begin{equation} \label{a3}
\widetilde{w} = \frac{i}{h} \int_{0}^{T} e^{-i t ( P-z) /h} [P , \tau ] u \, d t ,
\end{equation}
where $T>0$ is chosen such that $\exp ( T H_{p} ) ( \MS (v) ) \cap \exp ( ] - \infty , 0 ] H_{p} ) ( \rho ) = \emptyset$ (see Figure~\ref{f1}). Then, microlocally near each point of $\gamma^{-}$, we have
\begin{align*}
( P_{\theta} -z) \widetilde{w} &= ( P -z) \widetilde{w}    \\
&= [P , \tau ] u - e^{-i T (P -z)/h} [P , \tau ] u .
\end{align*}
For the first equality, we have used that $P = P_{\theta}$ near the spacial projection of $\MS ( \widetilde{w} ) \cap \gamma^{-} \subset \exp ( [ 0 , + \infty [ H_{p} ) ( \MS (v))$. Thus, microlocally near $\gamma^{-}$, we have
\begin{equation*}
( P_{\theta} -z) ( w - \widetilde{w} ) = e^{-i T (P -z)/h} [P , \tau ] u .
\end{equation*}
In particular, the choice of $T$ and the Egorov theorem imply $( P_{\theta} -z) ( w - \widetilde{w} ) =0$ microlocally near $\exp ( ] - \infty , 0 ] H_{p} ) ( \rho )$. On the other hand, combining Lemma \ref{a4} (for $w$) and the Egorov theorem (for $\widetilde{w}$), we obtain $w - \widetilde{w} = 0$ microlocally near $\exp ( -S H_{p} ) ( \rho )$, for all $S$ large enough. Using moreover that $\Vert w - \widetilde{w} \Vert \leq h^{-C}$, the propagation of singularities implies that
\begin{equation}
w = \widetilde{w} \text{ microlocally near } \rho .
\end{equation}

Then, microlocally near $\rho$, we have
\begin{align}
\widetilde{w} &= \frac{i}{h} \int_{0}^{T} e^{-i t ( P-z) /h} [P , \tau ] u \, d t = \frac{i}{h} \int_{0}^{T} e^{-i t ( P-z) /h} \big( ( P-z) \tau u  - \tau ( P -z) u \big) d t  \nonumber  \\
&= \frac{i}{h} \int_{0}^{T} e^{-i t ( P-z) /h} ( P-z) \tau u \, d t = - e^{- i T (P-z) /h} \tau u + \tau u = u ,
\end{align}
which proves the lemma.
\end{proof}

In fact, one can prove more directly Lemma \ref{a4} and Lemma \ref{a6} by applying the proof of Theorem 2 of \cite{BoMi04_01} to the function $w - \widetilde{w}$.

\Subsection{Representation of $w$ at the critical point}

We will use the variable $\sigma = ( z - E_{0} )/h$, the notation $\sigma_{\alpha}^{0} = ( z_{\alpha}^{0} - E_{0} )/h$ and the set ${\mathcal R} = D ( \sigma_{\alpha}^{0} , C_{2} ) \setminus D ( \sigma_{\alpha}^{0} , C_{1} )$. Note that $\sigma_{\alpha}^{0}$ and ${\mathcal R}$ does not depend on $h$ and ${\mathcal R}_{h} = E_{0} + h {\mathcal R}$. Since $\tau =1$ near $0$, we have
\begin{equation} \label{a8}
(P-z) w = (P_{\theta} -z) w = [P , \tau] u = 0 ,
\end{equation}
in a neighborhood of $0$. On the other hand, let $\rho \in \Lambda_{-} \cap \{ \vert x \vert = \delta \}$ with $\delta >0$ small enough. From Remark \ref{a7} and Lemma \ref{a6}, we have
\begin{equation}
w = \left\{ \begin{aligned}
&b e^{i \psi /h} &&\text{if } \rho \in \gamma^{-} \\
&0 &&\text{if } \rho \notin \gamma^{-} ,
\end{aligned} \right.
\end{equation}
microlocally near $\rho$. Moreover $\Vert w \Vert \lesssim h^{-C}$. Then, we are in position to apply Theorem 2.1 and Theorem 2.5 of \cite{BoFuRaZe07_01} which give a representation of $w$ microlocally near $(0,0)$. More precisely, Theorem 5.1 of \cite{BoFuRaZe07_01} states that, microlocally near $(0,0)$,
\begin{equation} \label{a15}
w = \frac{1}{(2 \pi h)^{\frac{1}{2}}} e^{i \varphi_{+} (x) /h} e^{i \psi (0) /h} A_{-} (x , \sigma ,h) + \frac{1}{(2 \pi h)^{\frac{1}{2}}} \int_{-1}^{+ \infty} e^{i \varphi (t ,x) /h} A_{+} (t ,x , \sigma ,h) \, d t .
\end{equation}
Concerning the symbol $A_{+}$, we will only use that $\sigma \mapsto A_{+}$ is a holomorphic function of $\sigma \in D (0 , C_{0})$ which decays uniformly exponentially in $t$ (see \cite[Proposition 5.11]{BoFuRaZe07_01}). The constant $\psi (0)$ is defined by
\begin{equation} \label{a29}
\psi (0) := \lim_{t \to + \infty} \psi (x (t)) = \psi (x (s)) - \varphi_{-} (x (s)) ,
\end{equation}
for all $s \geq 0$.

The symbol $A_{-} (x , \sigma ,h) \in S_{h}^{0} (h^{-C})$, holomorphic for $\sigma \in {\mathcal R}$, is constructed the following way. There exists an expandible symbol $a (t,x, \sigma ,h) \in S_{h}^{0} (1)$ of the form
\begin{equation*}
a (t,x, \sigma ,h) \sim \sum_{j=0}^{+ \infty} a_{j} (t,x, \sigma ) h^{j} ,
\end{equation*}
where the $a_{j}$'s satisfy
\begin{equation*}
a_{j} (t,x, \sigma ) \sim \sum_{k=0}^{+ \infty} a_{j , \mu_{k}} (t,x, \sigma ) e^{-(S + \mu_{k}) t} \qquad \text{ and } \qquad a_{j , \mu_{k}} (t,x, \sigma ) = \sum_{\ell=0}^{M_{j , \mu_{k}}} a_{j , \mu_{k} , \ell} (x, \sigma ) t^{\ell} .
\end{equation*}
We refer to Helffer and Sj\"{o}strand \cite{HeSj85_01} for the definition of expandible functions. Here, $S$ is defined by
\begin{equation*}
S  = S( \sigma ) := \sum_{j=1}^{n} \frac{\lambda_{j}}{2} - i \sigma .
\end{equation*}
The symbols $a_{j}, a_{j , \mu_{k}} , a_{j , \mu_{k} ,\ell}$ are holomorphic for $\sigma \in D (0 , C_{0} )$. Moreover, as in \cite[(6.26)]{BoFuRaZe07_01}, $a_{0,0}$ does not depend on $t$ (and $\sigma$) and
\begin{equation} \label{a9}
a_{0,0}(0) = \vert g^{-}_{\lambda_{1}} \vert \lambda_{1}^{\frac{3}{2}} e^{- i \frac{\pi}{4}} e^{- \int_{0}^{+ \infty} \Delta \psi (x(s)) - ( \sum \lambda_{j} /2 - \lambda_{1}) \, d s} b_{0} (x(0)) ,
\end{equation}
with $g^{-}_{\lambda_{j}} = \pi_{x} ( \gamma_{\lambda_{j}}^{-} )$, $\pi_{x}$ being the spatial projection. Let $\varphi_{\star} (t,x) = \varphi (t,x) - ( \varphi_{+} (x) + \psi (0) )$ be the expandible function
\begin{equation*}
\varphi_{\star} (t,x) \sim \sum_{k =1}^{+ \infty} \varphi_{\mu_{k}} (t,x) e^{- \mu_{k} t} \qquad \text{ and } \qquad \varphi_{\mu_{k}} (t,x) = \sum_{\ell =0}^{N_{\mu_{k}}} \varphi_{\mu_{k} , \ell} (x) t^{\ell} ,
\end{equation*}
constructed in \cite[Section 5]{BoFuRaZe07_01}. Recall that $\varphi (t,x)$ satisfies the eikonal equation
\begin{equation} \label{a10}
\partial_{t} \varphi + ( \partial_{x} \varphi )^{2} + V (x) = E_{0} .
\end{equation}
We consider the expandible symbol (see (5.77) of \cite{BoFuRaZe07_01})
\begin{equation} \label{a17}
\widetilde{a} = \sum_{q < Q_{1}} \frac{a}{q !} \left( \frac{i \varphi_{\star}}{h} \right)^{q} \sim \sum_{j = 1-Q_{1}}^{+ \infty} \widetilde{a}_{j} (t,x, \sigma ) h^{j} ,
\end{equation}
for some $Q_{1} \in \N$ fixed large enough,
\begin{equation*}
\widetilde{a}_{j} (t,x, \sigma ) \sim \sum_{k=0}^{+ \infty} a_{j , \mu_{k}} (t,x, \sigma ) e^{-(S + \mu_{k}) t} \qquad \text{and} \qquad \widetilde{a}_{j , \mu_{k}} (t,x, \sigma ) = \sum_{\ell=0}^{\widetilde{M}_{j , \mu_{k}}} \widetilde{a}_{j , \mu_{k} , \ell} (x, \sigma ) t^{\ell} .
\end{equation*}
Then, $A_{-} (x, \sigma ,h)$ is a symbol, holomorphic with respect to $\sigma \in {\mathcal R}$, such that
\begin{equation}\label{a16}
A_{-} (x, \sigma ,h) \sim \sum_{j=1-Q_{1}}^{+ \infty} h^{j} \sum_{k=0}^{K_{1}} \sum_{\ell =0}^{\widetilde{M}_{j , \mu_{k}}}  \frac{\ell !}{( S + \mu_{k})^{\ell +1}} \widetilde{a}_{j, \mu_{k} , \ell} (x, \sigma ) ,
\end{equation}
for some $K_{1} \in \N$ large enough.

In the following, we will need some informations on the $\varphi_{\mu_{k}}$. Let $j \in \{ 1 , \ldots , n\}$ be such that $\alpha_{j} \neq 0$ or $j=1$. Since $z_{\alpha}^{0}$ is simple, $\lambda_{j}$ can not be written as a non-trivial combination of the $\lambda_{k}$'s (i.e. $\lambda_{j} = \lambda \cdot \beta$ implies $\beta_{k} = \delta_{j,k}$). Therefore, calculating the term in $e^{- \lambda_{j} t}$ of \eqref{a10}, we obtain
\begin{equation*}
- \lambda_{j} \varphi_{\lambda_{j}} + \partial_{t} \varphi_{\lambda_{j}} + 2 \partial_{x} \varphi_{+} \cdot \partial_{x} \varphi_{\lambda_{j}} = 0 .
\end{equation*}
Working as in Section 6.1 of \cite{AlBoRa08_01} (see also (5.59) of \cite{BoFuRaZe07_01} for $j=1$), one can prove that $\varphi_ {\lambda_{j}}$ does not depend on $t$ (i.e. $N_{\lambda_{j}} =0$), that
\begin{equation} \label{a11}
2 \partial_{x} \varphi_{+} \cdot \partial_{x} \varphi_{\lambda_{j}} - \lambda_{j}\varphi_ {\lambda_{j}} =0 ,
\end{equation}
and that
\begin{equation} \label{a18}
\varphi_{\lambda_{j}} (x) = - \lambda_{j} g_{\lambda_{j}}^{-} x_{j} + \CO (x^{2} ) .
\end{equation}
Since $g_{\lambda_{j}}^{-}$ is collinear to the $j$-th vector of basis, we also denote this $j$-th component of the vector $g_{\lambda_{j}}^{-}$ by $g_{\lambda_{j}}^{-}$.

\Subsection{Integration with respect to $z$}
\label{k9}

Let $\gamma$ be a fixed simple loop in ${\mathcal R}$ around $0$ oriented counterclockwise and $\gamma_{h} = E_{0} + h \gamma \subset {\mathcal R}_{h}$. We integrate $w$ on the loop $\gamma_{h}$. First, since $z_{\alpha}$ is a simple resonance for $h$ small and since $v$ is a holomorphic function with respect to $z \in D (E_{0} , C_{0} h)$, the equations \eqref{a14} and \eqref{a12} give
\begin{equation} \label{a13}
\Pi_{z_{\alpha} , \theta} v (x, z_{\alpha} ) = - \frac1{2 i \pi} \oint_{\gamma_{h}} w (x,z) \, d z = - \frac{h}{2 i \pi} \oint_{\gamma} w (x, \sigma ) \, d \sigma .
\end{equation}

On the other hand, we can also calculate this quantity microlocally near $(0,0)$ with the help of \eqref{a15}. Since $\sigma \mapsto A_{+} ( \sigma )$ is holomorphic in $D ( 0 , C_{0} )$, the second term in the right hand side of \eqref{a15} gives no contribution to this integral. Moreover, for $\mu_{k} \neq \lambda \cdot \alpha$, the function $(S + \mu_{k})^{-1}$ is holomorphic for $\sigma \in D ( \sigma_{\alpha}^{0} , C_{2} )$. This implies that only the terms of \eqref{a16} with $\mu_{k} = \lambda \cdot \alpha$ give a non-zero contribution to the integral over $\sigma$.

We now look for the terms with $\mu_{k} = \lambda \cdot \alpha$ in \eqref{a17}. Among these terms, the one which gives the higher possible power of $h^{-1}$, is given by $q = \vert \alpha \vert$ and is equal to
\begin{equation*}
\frac{a_{0 ,0} (x)}{\vert \alpha \vert !} \left( \frac{i}{h} \right)^{\vert \alpha \vert} \frac{\vert \alpha \vert !}{\prod_{j=1}^{n} \alpha_{j} !} \prod_{j=1}^{n} \big( \varphi_{\lambda_{j}} (x) \big)^{\alpha_{j}} .
\end{equation*}
Here, we have used the fact that $z_{\alpha}^{0}$ is simple. Note that, since $a_{0,0}$ and $\varphi_{\lambda_{j}}$, with $\alpha_{j} \neq 0$, does not depend on $t$ (see the discussion before \eqref{a11}), this term does not depend on $t$. Then, $A_{-}$ satisfies, as $h \to 0$,
\begin{equation*}
A_{-} (x, \sigma ,h) \sim \sum_{j= 0}^{+ \infty} a^{-}_{j} ( x , \sigma ) h^{- \vert \alpha \vert + j} + {\mathcal H} (x, \sigma ,h),
\end{equation*}
where the $a^{-}_{j}$'s are holomorphic with respect to $\sigma \in {\mathcal R}$ and $C^{\infty}$ with respect to $x$ near $0$. The function $\sigma \mapsto {\mathcal H}$ is holomorphic in $D ( \sigma_{\alpha}^{0} , C_{2} )$. Moreover,
\begin{equation*}
a^{-}_{0} ( x , \sigma ) = \frac{i^{\vert \alpha \vert} a_{0 ,0} (x)}{( \sum \lambda_{j} /2 + \lambda \cdot \alpha - i \sigma ) \alpha !} \prod_{j=1}^{n} \big( \varphi_{\lambda_{j}} (x) \big)^{\alpha_{j}} .
\end{equation*}

Using the previous discussion, together with \eqref{a15} and \eqref{a13}, we obtain that
\begin{equation} \label{a21}
\Pi_{z_{\alpha} , \theta} v (x, z_{\alpha} ) = - \frac1{2i \pi} \oint_{\gamma} w (x,z) \, d z \sim e^{i \varphi_{+} (x) /h} e^{i \psi (0) /h} \sum_{j=0}^{+ \infty} \widehat{a}_{j} (x) h^{\frac{1}{2} - \vert \alpha \vert + j} ,
\end{equation}
microlocally near $( 0, 0)$. Moreover,
\begin{equation} \label{a19}
\widehat{a}_{0} ( x ) = - \frac{i^{\vert \alpha \vert + 1}}{(2 \pi )^{\frac{1}{2}} \alpha !} a_{0 ,0} (x) \prod_{j=1}^{n} \big( \varphi_{\lambda_{j}} (x) \big)^{\alpha_{j}} .
\end{equation}

To be more precise, in the $C^{\infty}$ case, Theorem 2.1 of \cite{BoFuRaZe07_01} gives only uniqueness for $z$ outside of a set $\Gamma (h)$, which is finite uniformly with respect to $h$. Then, to prove \eqref{a13}, we integrate first on a loop $\widetilde{\gamma}_{h} \in {\mathcal R}_{h} \setminus ( \Gamma (h) + D (0 , \varepsilon h) )$ of length of order $h$ and which may depend on $h$ in a non trivial way. But, since the function $w$ is holomorphic in ${\mathcal R}_{h}$, we can deform the contour $\widetilde{\gamma}_{h}$ to $\gamma_{h}$ and thus justify \eqref{a13}.

\Subsection{Construction and properties of $\widetilde{f}$}
\label{a46}

We define the functions $\widetilde{f}$ and $\widetilde{f}_{\theta}$ by
\begin{equation} \label{a20}
\widetilde{f} (x,h) := \widehat{c}^{-1} \Pi_{z_{\alpha}} v (x, z_{\alpha} ) \qquad \text{and} \qquad \widetilde{f}_{\theta} (x,h) := \widehat{c}^{-1} \Pi_{z_{\alpha} , \theta} v (x, z_{\alpha} ) ,
\end{equation}
where, using the notation $( g^{-} )^{\alpha} = \prod_{j=1}^{n} ( g^{-}_{\lambda_{j}} )^{\alpha_{j}}$,
\begin{equation} \label{a22}
\widehat{c} (h) : = - \frac{i^{\vert \alpha \vert + 1}}{(2 \pi )^{\frac{1}{2}} \alpha !} a_{0 ,0} (0) ( - \lambda g^{-})^{\alpha} h^{\frac{1}{2} - \vert \alpha \vert} e^{i \psi (0) /h} .
\end{equation}
As usual, we have $\chi \widetilde{f} = \chi \widetilde{f}_{\theta}$ if the distortion holds outside of the support of $\chi \in C^{\infty}_{0}$. From \eqref{a20}, $\widetilde{f}$ (resp. $\widetilde{f}_{\theta}$) is in the image of $\Pi_{z_{\alpha}}$ (resp. $\Pi_{z_{\alpha} , \theta}$). Moreover, using \eqref{a9} (which gives that $a_{0,0} (0) \neq 0$), \eqref{a18}, \eqref{a21}, \eqref{a19}, \eqref{a20} and \eqref{a22}, we have, microlocally near $(0,0)$,
\begin{equation*}
\widetilde{f} = \widetilde{d} (x, h) e^{i \varphi_{+} (x) /h} ,
\end{equation*}
where $\widetilde{d} (x,h) \in S_{h}^{0} (1)$ is a classical symbol satisfying
\begin{equation} \label{a42}
\widetilde{d} (x,h) \sim \sum_{j =0}^{+ \infty} \widetilde{d}_{j} (x) h^{j} \quad \text{ and } \quad \widetilde{d}_{0} (x) = x^{\alpha} + \CO ( x^{\vert \alpha \vert +1}) .
\end{equation}
In particular, $\widetilde{f}$ is not identically zero. Then, $\Pi_{z_{\alpha}}$ can be written as
\begin{equation} \label{a41}
\Pi_{z_{\alpha}} = \widetilde{c} \big( \, \cdot \, , \overline{\widetilde{f}} \big) \widetilde{f} , 
\end{equation}
and $\widetilde{f}$ satisfies $iv)$ of Theorem \ref{a2}. Furthermore, using Lemma~\ref{a4}, integrating over $z$ and coming back to the definition of $\widetilde{f}$ (see \eqref{a20}), we immediately obtain the point $iii)$ of Theorem~\ref{a2}. Since $\widetilde{f}$ is in the image of $\Pi_{z_{\alpha}}$ which is the spectral projection at a simple resonance, the point $ii)$ of Theorem~\ref{a2} is clear. Combining $iii)$, $iv)$, \eqref{a5}, which gives a uniform bound outside of the energy level, together with ``the transport equation'' $ii)$, we get the point $i)$ by a standard argument of propagation of singularities.

\Subsection{Calculation of $\big( v (z_{\alpha} ) , \overline{\widetilde{f}} \big)$}
\label{a53}

Here we calculate the scalar product between $v (x , z_{\alpha} )$ and $\overline{\widetilde{f}} (x)$. From \eqref{a26} and \eqref{a25}, the function $v$ is supported near $\supp \partial_{x} \tau$ and micro-supported near $\{ (x, \xi ) \in \R^{2 n} ; \ x \in \supp \partial_{x} \tau \text{ and } (x , \xi ) \in \Lambda_{\psi} \}$. Then, if $\supp \tau$ is close enough to $0$, the previous section and \eqref{a26} imply that
\begin{equation*}
\big( v (z_{\alpha} ) , \overline{\widetilde{f}} \big) = \big( [P , \tau] b e^{i \psi /h} , \overline{\widetilde{d} e^{i \varphi_{+} /h}} \big) + \CO (h^{\infty}) .
\end{equation*}
A direct calculus gives
\begin{equation}
[ P , \tau ] \big( b e^{i \psi /h} \big) = \widetilde{b} (x , h) e^{i \psi (x) /h},
\end{equation}
with
\begin{equation} \label{a36}
\widetilde{b} (x,h) \sim \sum_{j=0}^{+ \infty} \widetilde{b}_{j} (x) h^{1 + j} \qquad \text{ and } \qquad \widetilde{b}_{0} (x) = - 2 i \partial_{x} \tau \cdot \partial_{x} \psi b_{0} (x) .
\end{equation}
Then, using that $\varphi_{+} = - \varphi_{-}$, we get
\begin{equation} \label{a27}
\big( v (z_{\alpha} ) , \overline{\widetilde{f}} \big) = \int \widetilde{b} (x,h) \widetilde{d} (x,h) e^{i ( \psi (x) - \varphi_{-} (x) )  /h} d x + \CO (h^{\infty}) .
\end{equation}

The critical points of the phase $\psi - \varphi_{-}$ (i.e. the points $x$ such that $\nabla \psi (x) = \nabla \varphi_{-} (x)$) are the points in the spatial projection of $\Lambda_{\psi} \cap \Lambda_{-} = \gamma^{-}$. Moreover, since this intersection is transversal, the phase function $\psi - \varphi_{-}$ is non degenerate in the directions that are transverse to $\pi_{x} \gamma^{-}$ ($\pi_{x}$ being the spatial projection). Then, applying the method of the stationary phase in the orthogonal directions of $\pi_{x} \gamma^{-}$ (written $(\pi_{x} \gamma^{-} )^{\perp}$) and parameterizing the curve $\pi_{x} \gamma^{-}$ by $x (t)$, \eqref{a27} gives
\begin{equation} \label{a28}
( v (z_{\alpha} ) , \overline{\widetilde{f}}) = \int r (t, h) e^{i ( \psi (x(t)) - \varphi_{-} (x(t)) ) /h} d t + \CO (h^{\infty}) ,
\end{equation}
with $r (t,h) \sim \sum_{j=0}^{+ \infty} r_{j} (t) h^{\frac{n+1}{2} + j}$ and
\begin{equation} \label{a37}
r_{0} (t) = (2 \pi)^{\frac{n-1}{2}} \frac{e^{i \frac{\pi}{4} \sgn ( \psi - \varphi_{-} )_{\vert_{( \pi_{x} \gamma^{-} )^{\perp}}}''}}{\big\vert \det ( \psi - \varphi_{-} )_{\vert_{( \pi_{x} \gamma^{-} )^{\perp}}}'' \big\vert^{\frac{1}{2}}} \vert \partial_{t} x(t) \vert \widetilde{b}_{0} (x (t)) \widetilde{d}_{0} (x (t)) .
\end{equation}
From \eqref{a29}, we have $\psi (x(t)) - \varphi_{-} (x(t)) = \psi (0)$ for all $t \in \R$. In particular, \eqref{a28} can be written
\begin{equation} \label{a30}
\big( v (z_{\alpha} ) , \overline{\widetilde{f}} \big) = e^{i \psi (0) /h} s (h) ,
\end{equation}
with
\begin{equation} \label{a40}
s (h) \sim \sum_{j=0}^{+ \infty} s_{j} h^{\frac{n+1}{2} + j} \qquad \text{ and } \qquad s_{0} = \int r_{0} (t) \, d t .
\end{equation}

From \eqref{a19} and \eqref{a22}, we have
\begin{equation} \label{a32}
\widetilde{d}_{0} ( x ) = \frac{a_{0 ,0} (x)}{a_{0,0} (0)} \prod_{j=1}^{n} \Big( \frac{\varphi_{\lambda_{j}} (x)}{- \lambda_{j} g_{\lambda_{j}}^{-}} \Big)^{\alpha_{j}} .
\end{equation}
Using \eqref{a11}, we have the transport equation
\begin{align*}
\partial_{t} \varphi_{\lambda_{j}} (x(t)) &= \partial_{t} x(t) \cdot \partial_{x} \varphi_{\lambda_{j}} (x(t)) = 2 \xi (t) \cdot \partial_{x} \varphi_{\lambda_{j}} (x(t))   \\
&= - 2 \partial_{x} \varphi_{+} (x(t)) \cdot \varphi_{\lambda_{j}} (x(t)) = - \lambda_{j} \varphi_{\lambda_{j}} (x(t)) ,
\end{align*}
which gives
\begin{equation} \label{a31}
\varphi_{\lambda_{j}} (x(t)) = e^{- \lambda_{j} t} \varphi_{\lambda_{j}} (x(0)) .
\end{equation}
On the other hand, since $\varphi_{\lambda_{j}} (x)$ is $C^{\infty}$ and $x (t)$ is expandible, the function $t \mapsto \varphi_{\lambda_{j}} (x(t))$ is expandible. Moreover, since $\lambda_{j}$ can not be written as a non-trivial combination of the $\lambda_{k}$'s, the Taylor expansion \eqref{a18} of $\varphi_{\lambda_{j}}$ shows that the term in $e^{-\lambda_{j} t}$ in the expansion of $\varphi_{\lambda_{j}} (x(t))$ is $- \lambda_{j} ( g_{\lambda_{j}}^{-} )^{2} e^{-\lambda_{j} t}$. Since \eqref{a31} gives another asymptotic expansion, the uniqueness of the development implies that
\begin{equation*}
\varphi_{\lambda_{j}} (x(t)) = - \lambda_{j} ( g_{\lambda_{j}}^{-} )^{2} e^{- \lambda_{j} t} .
\end{equation*}
Then, combining with \eqref{a32}, we obtain
\begin{equation} \label{a33}
\widetilde{d}_{0} ( x (t) ) = ( g^{-} )^{\alpha} e^{- \lambda \cdot \alpha t} \big( 1 + \CO (e^{- \varepsilon t}) \big) .
\end{equation}
Note here that the curve $\gamma^{-}$ has been chosen in Section \ref{a34} such that $( g^{-} )^{\alpha} \neq 0$.

From the construction of $u$ in \eqref{a26} and since $z_{\alpha}$ is a classical symbol (see Remark \ref{k1}) with $z_{\alpha} = z_{\alpha}^{0} + \CO (h^{2}) = E_{0} - i h ( \lambda\cdot \alpha + \sum \lambda_{j}/2) + \CO (h^{2})$, the function $b_{0}$ satisfies the usual transport equation
\begin{equation*}
2 \partial_{x} \psi \cdot \partial_{x} b_{0} + \Big( \Delta \psi - \lambda \cdot \alpha - \sum \lambda_{j} /2 \Big) b_{0} =0 .
\end{equation*}
Mimicking the proof of \eqref{a31}, we get
\begin{equation} \label{a43}
b_{0} ( x (t) ) = e^{- \int_{0}^{t} \Delta \psi (x(s)) - ( \sum \lambda_{j} /2 + \lambda \cdot \alpha ) \, d s} b_{0} (x (0)) .
\end{equation}
Therefore, \eqref{a36} gives
\begin{align}
\widetilde{b}_{0} ( x (t) ) &= -i b_{0} ( x(t) ) \partial_{t} \tau (x(t))    \nonumber  \\
&= - i e^{- \int_{0}^{t} \Delta \psi (x(s)) - ( \sum \lambda_{j} /2 + \lambda \cdot \alpha ) \, d s} b_{0} (x (0)) \partial_{t} \tau (x(t)) . \label{a35}
\end{align}

From Proposition C.1 of \cite{AlBoRa08_01} and since $g_{1}^{-} \neq 0$, we have
\begin{equation*}
(\psi - \varphi_{-} ) '' (x (t)) =  \left(
\begin{array}{cccccccc}
0 & & & & \\
& & \lambda_{2} & & \\
& & & \ddots & \\
& & & & \lambda_{n}
\end{array} \right)+ \CO ( e^{ - \varepsilon t}) ,
\end{equation*}
Using $x(t) = g_{1}^{-} e^{- \lambda_{1} t} + \CO ( e^{- ( \lambda_{1} + \varepsilon )t} )$, we get
\begin{equation} \label{a38}
\big\vert \det ( \psi - \varphi_{-} )_{\vert_{( \pi_{x} \gamma^{-} )^{\perp}}}'' (x(t)) \big\vert^{\frac{1}{2}} = \Big( \prod_{j=2}^{n} \lambda_{j} \Big)^{\frac{1}{2}} + \CO ( e^{ - \varepsilon t}) .
\end{equation}
and
\begin{equation} \label{a39}
\sgn ( \psi - \varphi_{-} )_{\vert_{( \pi_{x} \gamma^{-} )^{\perp}}}'' (x(t)) = n-1 ,
\end{equation}
for $t$ large enough.

Finally, using the expansion of $x(t)$, we have
\begin{equation} \label{a44}
\vert \partial_{t} x(t) \vert = \vert g_{\lambda_{1}}^{-} \vert \lambda_{1} e^{- \lambda_{1} t} \big( 1 + \CO (e^{- \varepsilon t}) \big) .
\end{equation}

Combining the definitions of $s_{0}$ \eqref{a40} and of $r_{0}$ \eqref{a37} with the relations \eqref{a33}, \eqref{a35}, \eqref{a38}, \eqref{a39} and \eqref{a44}, the constant $s_{0}$ does not vanish if $\partial_{t} \tau (x(t)) \geq 0$ and the support of $\partial_{t} \tau (x(t))$ is sufficiently small near $T$ large enough.

\Subsection{End of the proof of Theorem \ref{a2}}
\label{a57}

From \eqref{a20} and \eqref{a41}, we have
\begin{equation*}
\widehat{c} \widetilde{f} = \widetilde{c} \big( v (z_{\alpha}), \overline{\widetilde{f}} \big) \widetilde{f} .
\end{equation*}
In particular, using \eqref{a22} and \eqref{a30}, we get
\begin{equation}
\widetilde{c} = \frac{\widehat{c}}{\big( v (z_{\alpha}), \overline{\widetilde{f}} \big)} = - \frac{i^{\vert \alpha \vert + 1}}{(2 \pi )^{\frac{1}{2}} \alpha ! s (h)} a_{0 ,0} (0) ( - \lambda g^{-})^{\alpha} h^{\frac{1}{2} - \vert \alpha \vert} \sim \sum_{j=0}^{+ \infty} \widetilde{c}_{j} h^{- \frac{n}{2} - \vert \alpha \vert + j} ,
\end{equation}
with
\begin{equation}
\widetilde{c}_{0} = - \frac{i^{\vert \alpha \vert + 1} a_{0 ,0} (0) ( - \lambda g^{-})^{\alpha}}{(2 \pi )^{\frac{1}{2}} \alpha ! s_{0}} .
\end{equation}

At this point, the function $\widetilde{f}$ and the constant $\widetilde{c}$ may depend on $v$. Nevertheless, since $\Pi_{z_{\alpha}} = \widetilde{c} ( \, \cdot \, , \overline{\widetilde{f}} ) \widetilde{f}$ and $\widetilde{d}_{0}$ (the first term in the development of $\widetilde{f}$ given in \eqref{a42}) do not depend on $v$, the constant $\widetilde{c}_{0}$ also does not depend on $v$.

We choose a sequence of functions $\tau$ (say $\tau_{N}$), with $\partial_{t} \tau_{N} (x(t)) \geq 0$, such that $\partial_{t} \tau_{N} (x(t))$ converges to the Dirac mass $\delta_{t}$ for some fixed $t>0$. Then, from the definition of $s_{0}$ \eqref{a40} and of $\widetilde{b}_{0}$ \eqref{a35}, we get
\begin{equation*}
\widetilde{c}_{0} = \frac{i^{\vert \alpha \vert + 1} a_{0 ,0} (0) ( - \lambda g^{-})^{\alpha}}{i (2 \pi)^{\frac{n}{2}} \vert \partial_{t} x(t) \vert b_{0} (x (t)) \widetilde{d}_{0} (x (t)) \alpha !} \frac{\big\vert \det ( \psi - \varphi_{-} )_{\vert_{( \pi_{x} \gamma^{-} )^{\perp}}}'' \big\vert^{\frac{1}{2}}}{e^{i \frac{\pi}{4} \sgn ( \psi - \varphi_{-} )_{\vert_{( \pi_{x} \gamma^{-} )^{\perp}}}''}} .
\end{equation*}
Combining \eqref{a9}, \eqref{a33}, \eqref{a43}, \eqref{a38}, \eqref{a39} and \eqref{a44}, we obtain
\begin{equation*}
\widetilde{c}_{0} = \frac{i^{\vert \alpha \vert} e^{- i \frac{\pi}{4}} e^{- \int_{0}^{+ \infty} \Delta \psi (x(s)) - ( \sum \lambda_{j} /2 - \lambda_{1}) \, d s} ( - \lambda )^{\alpha}}{(2 \pi)^{\frac{n}{2}} e^{- \int_{0}^{t} \Delta \psi (x(s)) - ( \sum \lambda_{j} /2 - \lambda_{1}) \, d s} \alpha !} \frac{\big( \prod_{j=1}^{n} \lambda_{j} \big)^{\frac{1}{2}}}{e^{i (n-1) \frac{\pi}{4}}} \big( 1 + \CO (e^{- \varepsilon t} ) \big) .
\end{equation*}
Then, letting $t$ going to $+ \infty$ and using that $\widetilde{c}_{0}$ does not depend on $t$, it follows
\begin{equation} \label{a45}
\widetilde{c}_{0} = \frac{i^{\vert \alpha \vert} ( - \lambda )^{\alpha} \big( \prod_{j=1}^{n} \lambda_{j} \big)^{\frac{1}{2}}}{(2 \pi)^{\frac{n}{2}} e^{i n \frac{\pi}{4}} \alpha !} .
\end{equation}

We now consider a fixed $v$ as in the beginning of this subsection. With $c (h)$ as in \eqref{a24}, \eqref{a45} gives that $\widetilde{c} = c \breve{c}$ where
\begin{equation*}
\breve{c} \sim \sum_{j=0}^{+ \infty} \breve{c}_{j} h^{j} \qquad \text{ and } \qquad \breve{c}_{0} =1 .
\end{equation*}
Now, we define $f := \breve{c}^{\frac{1}{2}} \widetilde{f}$. Then, \eqref{a41} gives \eqref{a23} and the properties of $f$ given in Theorem~\ref{a2} follow from the properties of $\widetilde{f}$ given in Section~\ref{a46} and $\breve{c}_{0} =1$.

\section{Residue of the scattering amplitude}
\label{k2}

In this section, we give the semiclassical expansion of the residue of the scattering amplitude at an isolated resonance. To define the scattering matrix, we assume that the potential is long range:
\begin{hyp} \label{h7}
For some $\rho > 0$, we have $\vert V(x) \vert \lesssim \< x \>^{-\rho}$ for all $x \in \CS$.
\end{hyp}
Using the constructions of Isozaki and Kitada (see \cite{IsKi85_01} and \cite{IsKi86_01}), the assumption \ref{h7} allows to define the scattering matrix $S (z,h)$, $z \in ] 0, + \infty [$ related to the pair $P_{0} = - h^{2} \Delta$ and $P$ as a unitary operator
\begin{equation*}
S (z,h) : L^{2} ( \S^{n-1} ) \longrightarrow L^{2} ( \S^{n-1} ) .
\end{equation*}
In the short range case (i.e. $\rho >1$), this operator coincides with the usual scattering matrix. Next, introduce the operator $\CT (z,h)$ defined by
\begin{equation*}
S (z,h) = Id - 2 i \pi \CT (z,h) .
\end{equation*}
Its kernel $\CT ( \omega , \omega ' , z,h)$ is smooth away from the diagonal of $\S^{n-1} \times \S^{n-1}$ (see \cite{IsKi86_01}). Here, $\omega$ (resp. $\omega '$) is called the outgoing (resp. incoming) direction. Finally, the scattering amplitude is defined for $\omega \neq \omega '$ by
\begin{equation*}
{\mathcal A} ( \omega , \omega ' , z ,h) = c (z ,h) \CT ( \omega , \omega ' , z,h) ,
\end{equation*}
with
\begin{equation*}
c (z , h) = - ( 2 \pi ) z^{- \frac{n-1}{4}} (2 \pi h)^{\frac{n-1}{2}} e^{- i \frac{(n-3) \pi}{4}} .
\end{equation*}
In \cite{GeMa89_02}, G\'erard and Martinez have shown that for $\omega \neq \omega '$ fixed, the scattering amplitude has a meromorphic continuation to a neighborhood of $]0, + \infty [$, whose poles are the resonances of $P$. Moreover, the multiplicity of each pole is less or equal to the multiplicity of the resonance. Notice that, since the kernel of the residue of the scattering matrix is not singular at $\omega = \omega '$ (see Theorem 1.1 (iii) of \cite{GeMa89_02}), we drop the assumption $\omega \neq \omega '$ in the sequel.

We will now make some hypotheses on the behavior of the classical curves. Let $(x (t) , \xi (t) ) = \exp ( t H_{p} ) (x, \xi )$ be a Hamiltonian curve in $p^{-1} (E_{0})$. Under the hypotheses \ref{A1}--\ref{h7}, there are only two possible behaviors for $x (t)$ as $t \to \pm \infty$: either it escapes to $ \infty$, or it goes to $0$. From the long range assumption \ref{h7}, if $x (t)$ escapes to $\infty$, then $\xi (t)$ has a limit in $\sqrt{E_{0}} \S^{n-1}$. Moreover the set of points with asymptotic direction $\omega$ and $\omega '$,
\begin{gather*}
\Lambda_{\omega '}^{-} = \big\{ (x, \xi ) \in p^{-1} ( E_{0} ) ; \ \xi (t) \longrightarrow \sqrt{E_{0}} \omega ' \text{ as } t \to - \infty \big\} ,  \\
\Lambda_{\omega}^{+} = \big\{ (x, \xi ) \in p^{-1} ( E_{0} ) ; \ \xi (t) \longrightarrow \sqrt{E_{0}} \omega \text{ as } t \to + \infty \big\} ,
\end{gather*}
are Lagrangian submanifolds of $T^{*} \R^{n}$ (see \cite{DeGe97_01}). We suppose that
\begin{hyp}  \label{h8}
$\Lambda_{\omega '}^{-}$ and $\Lambda_-$ (resp. $\Lambda^+_{\omega}$ and $\Lambda_+$) intersect in a finite number $N_-$ (resp $N_+$) of bicharacteristic curves, with each intersection transverse.
\end{hyp}
We denote these curves, respectively,
\begin{equation*}
\gamma_k^-:t\mapsto \gamma_{k}^{-} (t) = (x_k^-(t),\xi^-_{k} (t) ), \quad 1\leq k\leq N_-, 
\end{equation*}
and 
\begin{equation*}
\gamma_\ell^+:t\mapsto \gamma_{\ell}^{+} (t)=(x^+_{\ell} (t),\xi^+_{\ell} (t) ), \quad 1\leq \ell\leq N_+.
\end{equation*}
Note that, from Proposition~2.5 of \cite{AlBoRa08_01}, the intersections $\Lambda_{\omega '}^{-} \cap \Lambda_-$ and $\Lambda^{+}_{\omega} \cap \Lambda_+$ are never empty (i.e. $N_{-} \geq 1$ and $N_{+} \geq 1$). From \cite{HeSj85_01}, the curve $\gamma_{\star}^{\pm}$ with $\star = k , \ell$ satisfies
\begin{equation*}
x_{\star}^{\pm} (t) \sim \sum_{j=1}^{+ \infty} g^{\star , \pm}_{\mu_{j}} (t) e^{\pm \mu_{j} t} \quad \text{ with } \quad g^{\star , \pm}_{\mu_{j}} (t) = \sum_{m=0}^{M^{\star , \pm}_{\mu_{j}}} g^{\star , \pm}_{\mu_{j} , m} t^{m} \quad \text{ as } t \to \mp \infty .
\end{equation*}
From Lemma~\ref{d3}, if $\lambda_{j}$ satisfies $\lambda \cdot \alpha = \lambda_{j} \Longrightarrow \vert \alpha \vert =1$, then $M_{\lambda_{j}}^{\star , \pm} =0$. Moreover, there always exists a $\mu_{j}$ such that $g_{\mu_{j}}^{\star , \pm} \neq 0$. We define
\begin{equation*}
\lambda_{\star}^{\pm} = \min \{ \mu_{j} ; \ g_{\mu_{j}}^{\star , \pm} \neq 0 \} .
\end{equation*}
We know that $\lambda_{\star}^{\pm}$ is one of the $\lambda_{j}$'s and that $M_{\lambda_{j}}^{\star , \pm} =0$ (see \cite[(2.18)]{AlBoRa08_01}). We shall denote
\begin{equation*}
S_{k}^{-} = \int_{- T_{k}^{-}}^{+ \infty} x_{k}^{-} (s) \partial_{x} V (x_{k}^{-} (s)) \, d s \quad \text{ and } \quad S_{\ell}^{+} = \int_{- \infty}^{T_{\ell}^{+}} x_{\ell}^{+} (s) \partial_{x} V (x_{\ell}^{+} (s)) \, d s ,
\end{equation*}
for some $T_{\star}^{\pm}$ large enough which is equal to $+ \infty$ in the short range case $\rho > 1$.

Moreover, in the short range case $\rho > 1$, the bicharacteristic curves in $\Lambda_{\alpha}^{\pm}$, $\alpha \in \S^{n-1}$, are the bicharacteristic curves $\gamma_{\pm} ( t , z , \alpha ) = ( x_{\pm} ( t , z , \alpha ) , \xi_{\pm}(t , z , \alpha ))$ for which there exists a $z \in \alpha^{\perp} \sim \R^{n-1}$ such that
\begin{align*}
&\lim_{t \to \pm \infty} \big\vert x_{\pm} ( t , z , \alpha ) - 2 \sqrt{E_{0}} \alpha t - z \big\vert = 0 , \\
&\lim_{t \to \pm \infty} \big\vert \xi_{\pm} (t , z , \alpha ) - \sqrt{E_{0}} \alpha \big\vert = 0 .
\end{align*}
These trajectories are smooth with respect to $t ,z, \alpha$. We denote by $z_{\star}^{\pm}$ the impact parameter of the curve $\gamma_{\star}^{\pm}$. Let
\begin{align*}
D_{k}^- &=\lim_{t\to +\infty} \Big\vert\det\frac{\partial x_-(t,z,\omega ' )}{\partial(t,z)}\vert_{z=z^-_k}\Big\vert\; e^{-(\Sigma_{j} \lambda_j-2\lambda_{k}^{-} )t},\\
D_{\ell}^+ &=\lim_{t\to -\infty}\Big\vert\det\frac{\partial x_+(t,z,\omega )}{\partial(t,z)}\vert_{z=z^+_\ell}\Big\vert\; e^{(\Sigma_{j} \lambda_j - 2 \lambda_{\ell}^{+} ) t} ,
\end{align*}
be the Maslov determinants for $\gamma_{\star}^{\pm}$ which exist and satisfy $0 < D_{\star}^{\pm} < + \infty$ (see \cite{AlBoRa08_01}). We shall also denote by $\nu_{\star}^{\pm}$ the Maslov index of the curve $\gamma_{\star}^{\pm}$.

\begin{theorem}[Residue of the scattering amplitude]\sl \label{a47}
Assume \ref{A1}--\ref{h8}. Let $\alpha \in \N^{n}$ be such that $z_{\alpha}^{0}$ is simple. Then, the residue of the scattering amplitude satisfies
\begin{equation*}
\residue \big( {\mathcal A} ( \omega , \omega ' ,z , h) , z = z_{\alpha} \big) = \sum_{k=1}^{N_{-}} \sum_{\ell =1}^{N_{+}} a_{k, \ell} h^{- \vert \alpha \vert + \frac{1}{2}} e^{i ( S_{k}^{-} + S_{\ell}^{+} ) /h} + \CO (h^{\infty}) ,
\end{equation*}
where
\begin{equation*}
a_{k, \ell} (h) = b_{k}^{-} (h) b_{\ell}^{+} (h) \quad \text{ and } \quad b_{\star}^{\pm} (h) \sim \sum_{j=0}^{+ \infty} b_{\star , j}^{\pm} h^{j} .
\end{equation*}
Moreover, $b_{\star , 0}^{\pm} = 0$ if and only if $( g^{\star , \pm} )^{\alpha} =0$. Finally, in the short range case $\rho >1$, we have
\begin{align*}
b_{k,0}^{-} b_{\ell , 0} ^{+} =& \frac{e^{- i \frac{\pi}{2} ( \vert \alpha \vert - \frac{1}{2} )}}{\sqrt{2 \pi} \alpha !} E_{0}^{\frac{n-1}{4}} ( \lambda_{k}^{-} \lambda_{\ell}^{+} )^{\frac{3}{2}} \prod_{j=1}^{n} \lambda_{j}^{\alpha_{j} - \frac{1}{2}}     \\
&\qquad \times e^{- i \nu_{k}^{-} \pi /2} e^{- i \nu_{\ell}^{+} \pi /2} ( D_{k}^{-} D_{\ell}^{+} )^{- \frac{1}{2}} ( g^{k , -} )^{\alpha} ( g^{\ell , +} )^{\alpha} \vert g_{\lambda_{k}^{-}}^{k , -} \vert \vert g_{\lambda_{\ell}^{+}}^{\ell , +} \vert .
\end{align*}
\end{theorem}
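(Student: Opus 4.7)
The plan is to start from a representation formula for the scattering amplitude in terms of the cut-off resolvent sandwiched between Isozaki--Kitada type generalized eigenfunctions $u_{\pm}( \cdot , z , \alpha)$ associated to $P_{0} = -h^2\Delta$ and the incoming/outgoing directions. Schematically,
\begin{equation*}
{\mathcal A}(\omega,\omega',z,h) = c(z,h)\bigl( W_{+}(z) u_{+}(\cdot,z,\omega), (P-z)^{-1} W_{-}(z) u_{-}(\cdot,z,\omega')\bigr) + \text{(holomorphic)},
\end{equation*}
where $W_{\pm}$ are the standard $\CO(\langle x\rangle^{-\infty})$-supported conjugation terms produced by the Isozaki--Kitada parametrices (see \cite{GeMa89_02,IsKi85_01}). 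Since the residue of $(P-z)^{-1}$ at $z_{\alpha}$ is, by Theorem~\ref{a2}, the rank-one operator $\Pi_{z_{\alpha}} = c(h) (\, \cdot \, , \overline{f}) f$, the residue of ${\mathcal A}$ factors as
\begin{equation*}
\residue\bigl({\mathcal A}(\omega,\omega',z,h),z=z_{\alpha}\bigr) = c(z_{\alpha},h) \, c(h) \, \bigl(W_{+}u_{+}, \overline{f}\bigr) \bigl(u_{-}, W_{-}^{*} \overline{f}\bigr),
\end{equation*}
which already explains the product structure $a_{k,\ell}=b_{k}^{-}b_{\ell}^{+}$ and the summation over $(k,\ell)$ will come from localizing each scalar product near a single bicharacteristic.

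\textbf{Computation of the scalar products by stationary phase.} Each factor $(W_{\pm} u_{\pm}, \overline{f})$ (or its dual) is a semiclassical oscillatory integral. The function $W_{\pm} u_{\pm}$ is a WKB function whose Lagrangian is $\Lambda_{\omega}^{+}$ or $\Lambda_{\omega'}^{-}$, while $\overline{f}$ is, by part $(iv)$ of Theorem~\ref{a2}, a WKB function with Lagrangian $\Lambda_{-}$ (since $\varphi_{-}=-\varphi_{+}$). Under hypothesis~\ref{h8}, these Lagrangians meet transversally along the finitely many bicharacteristic curves $\gamma_{k}^{-}$ and $\gamma_{\ell}^{+}$, so a standard non-stationary phase argument reduces the integral to a sum over these curves, each contribution being a classical one-dimensional oscillatory integral along the curve with a transverse Gaussian weight. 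Parametrizing by $t$ and applying stationary phase (exactly as in Section~\ref{a53}), one obtains
\begin{equation*}
\bigl(W_{-}u_{-}(\cdot,z_{\alpha},\omega'),\overline{f}\bigr) = \sum_{k=1}^{N_{-}} b_{k}^{-}(h) \, e^{i S_{k}^{-}/h} h^{\frac{n+1}{2}} + \CO(h^{\infty}),
\end{equation*}
and analogously on the outgoing side with the curves $\gamma_{\ell}^{+}$, the integrated action $S_{\ell}^{+}$, the Maslov index $\nu_{\ell}^{+}$ and the determinant $D_{\ell}^{+}$. The symbols $b_{\star}^{\pm}$ are classical in $h$, and their leading term is proportional to $\widetilde{d}_{0}(x_{\star}^{\pm}(t)) = (g^{\star,\pm})^{\alpha} e^{-\lambda\cdot\alpha t} + \ldots$ as in \eqref{a33}, which gives the stated vanishing criterion $b_{\star,0}^{\pm}=0 \Leftrightarrow (g^{\star,\pm})^{\alpha}=0$.

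\textbf{Identification of the leading coefficient.} Combining the two stationary-phase factors with the constant $c(h)$ of Theorem~\ref{a2} and the normalization prefactor $c(z,h)$ in front of $\CT$, and using the formulas \eqref{a9}, \eqref{a38}, \eqref{a39}, \eqref{a43}, \eqref{a44} (now applied along $\gamma_{k}^{-}$ and $\gamma_{\ell}^{+}$ instead of the auxiliary test curve $\gamma^{-}$), one reads off the product $b_{k,0}^{-}b_{\ell,0}^{+}$. The transversality assumption~\ref{h8} produces the Maslov determinants $D_{\star}^{\pm}$ through the transverse Hessian, and the Maslov indices $\nu_{\star}^{\pm}$ come from the signature together with the overall Maslov class of $\Lambda_{\omega}^{\pm}$. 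The factor $E_{0}^{(n-1)/4}$ arises from the energy-dependent normalization $c(z,h)$, and the powers of $\lambda_{j}$ rearrange into the stated closed form using $\prod (-\lambda_{j})^{\alpha_{j}}$ from \eqref{a24}.

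\textbf{Main obstacle.} The main difficulty is the matching near the hyperbolic fixed point: the resonant state $f$ is defined globally by Theorem~\ref{a2} but its WKB form is only valid microlocally near $\Lambda_{+}$ away from $(0,0)$; meanwhile the Isozaki--Kitada functions $u_{\pm}$ live on $\Lambda_{\omega}^{\pm}$. The proof of transversality (\ref{h8}) away from $(0,0)$ reduces the stationary phase to a compact region, but one must also show that the cut-off resolvent estimate of Theorem~\ref{aaa} controls all remainder pieces uniformly on the contour used to extract the residue. This is where the polynomial bound $h^{-K}\prod|z-z_{\alpha}|^{-1}$ is essential: it allows deforming the $z$-contour and using the meromorphic continuation of \cite{GeMa89_02} to pick up only the pole at $z_{\alpha}$ while keeping all other contributions $\CO(h^{\infty})$. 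The long-range case is handled by replacing $S_{\star}^{\pm}$ by the regularized actions (finite $T_{\star}^{\pm}$) and absorbing the tail of the phase into the amplitude $b_{\star}^{\pm}$, exactly as in the long-range Isozaki--Kitada calculus.
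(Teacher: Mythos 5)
Your overall strategy matches the paper's (G\'erard--Martinez representation formula, residue of the resolvent given by $\Pi_{z_\alpha}$ from Theorem~\ref{a2}, factorization into two scalar products, stationary phase along the curves $\gamma_k^-$ and $\gamma_\ell^+$), and your explanation of where the product structure $a_{k,\ell}=b_k^-b_\ell^+$ and the factor $E_0^{(n-1)/4}$ come from is right. However, the central technical device of Section~5.2 is missing, and it is not merely a detail.

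You describe $(W_{\pm}u_{\pm},\overline f)$ as a ``standard'' stationary-phase integral and claim one ``reads off'' the coefficient. This does not work as stated. The Isozaki--Kitada amplitude $t_1(x,\xi,h)$ is $\CO(e^{-\widetilde\varepsilon\langle x\rangle/h})$ in the outgoing cone (cf.~\eqref{a50}) and vanishes near the critical point, so the naive oscillatory integral localizes on a fixed compact annular piece of $\gamma_\ell^+$ where $t_1$ is non-negligible; on that piece the amplitudes $t_1$, $a^+_{\ell,0}$, $d_0$ are opaque quantities from the parametrix construction, and there is no way to read off the explicit formula in the theorem, which involves $D_\ell^+$, $(g^{\ell,+})^\alpha$, $\lambda_\ell^+$, all of which are defined only as limits of the curve data as $t\to-\infty$ (cf.~\eqref{a65}, \eqref{a62}). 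What the paper does is first rewrite the scalar product via integration by parts, $(f,\chi v)= (f,\chi(P-\overline{z_\alpha})u)= -(f,[P,\chi]u)$, extend $u$ as a WKB solution near $(0,0)$ along $\gamma_\ell^+$ using Maslov theory, and then replace the spatial cutoff $\chi$ by a pseudodifferential one $\Op(\widetilde\chi)$ that equals $\chi$ away from $\gamma_\ell^+$ but whose gradient support can be pushed towards $(0,0)$. The crucial observation is that, modulo $\CO(h^\infty)$, the scalar product is independent of $\widetilde\chi$ (see \eqref{a52}); choosing a sequence of $\widetilde\chi$'s so that $\supp\partial_t\widetilde\chi(x_\ell^+(t))$ drifts to $t\to-\infty$ turns the one-dimensional integral over $t$ into an explicit limit, which is how the asymptotic expansions \eqref{a62}, \eqref{a64}, \eqref{a65} are brought to bear and the closed-form constant \eqref{a69} is obtained. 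Without this localization-and-limit argument you have no route to the explicit leading coefficient, so your proof has a genuine gap at precisely the point that carries the content of the theorem.
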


In the last formula $( g^{\star , \pm} )^{\alpha}$ is a shorthand for $\prod_{j=1}^{n} ( g^{\star , \pm}_{\lambda_{j}} )^{\alpha_{j}}$ where $g^{\star , \pm}_{\lambda_{j}}$ is identified with its $j$-th coordinate. To prove the theorem, we first obtain a representation formula for the scattering amplitude involving the resolvent. Then we apply Theorem \ref{a2} to express the residue of the scattering amplitude with the help of the resonant state $f$. Finally, the result follows from the computation of two scalar products which are done with the stationary phase method.

\begin{remark}\sl
Stefanov \cite{St02_01} (in the compact support case) and Michel \cite{Mi03_01} (in the long range case) have given a priori estimates for the residue of the scattering amplitude. For the resonances $z_{0}$ very close to the real axis (more precisely $\vert \im z_{0} \vert \lesssim h^{\frac{3n+5}{2}}$) and under a separation condition, they have proved that the residue satisfies
\begin{equation*}
\big\vert \residue \big( {\mathcal A} ( \omega , \omega ' ,z , h) , z = z_{0} \big) \big\vert \lesssim h^{- \frac{n-1}{2}} \vert \im z_{0} \vert .
\end{equation*}
In the present situation, these results do not apply since the resonances are ``too far'' from the real axis. Furthermore, the previous estimate does not hold. Indeed, the imaginary part of $z_{\alpha}$ behaves like $- \vert \alpha \vert h$ but the residue is typically of order $h^{-\vert \alpha \vert + \frac{1}{2}}$.
\end{remark}

In the one dimensional case, Theorem \ref{a47} can probably be deduced from the computation of the scattering amplitude obtained by the third author in \cite{Ra96_01}.

For a punctual well in the island case and under some geometrical assumptions, the asymptotic of the residue of the scattering amplitude has been computed by Nakamura \cite{Na89_01,Na89_02}, Lahmar-Benbernou \cite{Be99_01} and Lahmar-Benbernou and Martinez \cite{LaMa99_01}.

It is possible to compare Theorem \ref{a47} with the semiclassical expansion of the scattering amplitude for real energy obtained in \cite{AlBoRa08_01}. Assume for simplicity that the $\lambda_{j}$'s are non-resonant ($\Z$-independent for example), $\lambda_{n} < 2 \lambda_{1}$, $N_{-} = N_{+} = 1$, $N_{\infty} = 0$ and $g_{\lambda_{j}}^{1,-} \neq 0$ for all $j \in \{ 1 , \ldots , n\}$. In particular, we have $k = \ell = 1$. Let $J \in \{ 1 , \ldots , n\}$ be the first $j$ with $g_{\lambda_{j}}^{1,+} \neq 0$ (thus, $\lambda_{J} = \lambda^{+}_{1}$). In that case, Theorem 2.6 {\rm (a)} of \cite{AlBoRa08_01} gives
\begin{equation} \label{k7}
{\mathcal A} ( \omega , \omega ' , E , h) = \Big( f (E) \Gamma \Big( \frac{\Sigma (E)}{\lambda_{J}} \Big) + o (1) \Big) h^{\frac{\Sigma (E)}{\lambda_{J}} - \frac{1}{2}} e^{i ( S_{1}^{-} + S_{1}^{+} ) /h} ,
\end{equation}
for $E$ real with $E - E_{0} = \CO (h)$. Here,
\begin{equation*}
\Sigma (E) = \sum_{j=1}^{n} \frac{\lambda_{j}}{2} - i \frac{E-E_{0}}{h} ,
\end{equation*}
and $f(E)$ is an explicit function, analytic near $E_{0}$. Thus, the main term in \eqref{k7}, defined in \cite{AlBoRa08_01} for $E$ real, has a meromorphic extension in a fix neighborhood of $E_{0}$. Moreover, its poles are exactly the pseudo-resonances $z_{\alpha}^{0} \in \res_{0} (P)$ with $\alpha = ( 0 , \ldots , 0 , \alpha_{J} , 0 , \ldots , 0 )$ and the corresponding residue coincides with that given in Theorem \ref{a47}. In particular, this principal term does not contribute to the residue at the other (pseudo)-resonances. The cases {\rm (b)} and {\rm (c)} in Theorem 2.6 of \cite{AlBoRa08_01} only appear for resonant $\lambda_{j}$'s and the corresponding main terms in the semiclassical expansion of the scattering amplitude have poles at some $z_{\alpha}^{0} \in \res_{0} (P)$ which are not simple.

\Subsection{Representation formula for the scattering amplitude}

In this section, we recall a representation formula of the scattering amplitude for complex energies due to G\'erard and Martinez \cite{GeMa89_02}. Their approach consists in extending the formula of Isozaki and Kitada \cite{IsKi86_01} to complex energies. For this purpose, they show that the phases and the symbols involved in that formula can be chosen to be analytic in a suitable complex neighborhood of $\R^{2n}$. We only recall what will be useful in the following and refer to \cite{GeMa89_02} for the details.

For $R>0$ large enough, $d>0$, $\varepsilon >0$ and $\sigma \in ]0, 1[$, we denote
\begin{align*}
\Gamma^{\pm}_{\C} ( R ,d , \varepsilon , \sigma ) &= \big\{ ( x, \xi ) \in \C^{2n} ; \ \vert \re x \vert > R , \ d^{-1} < \vert \re \xi \vert < d , \ \vert \im x \vert \leq \varepsilon \< \re x \> , \\
&\qquad \qquad \qquad \qquad \qquad \vert \im \xi \vert \leq \varepsilon \< \re \xi \> \text{ and } \pm \cos ( \re x , \re \xi ) \geq \pm \sigma \big\} ,  \\
\Gamma^{\pm} ( R ,d , \sigma ) &= \Gamma^{\pm}_{\C} ( R ,d , \varepsilon , \sigma ) \cap \R^{2 n}.
\end{align*}
Let $\varepsilon >0$, $d \gg 1$, $-1 < \sigma_{1}^{-} < \sigma_{1}^{+} < 0 < \sigma_{2}^{-} < \sigma^{+}_{2} < 1$ and $R_{1} > 0$ be sufficiently large. For $k = 1,2$, we denote $\Gamma^{k} = \Gamma^{+}_{\C} ( R_{1} ,d, \varepsilon , \sigma_{k}^{+} ) \cup \Gamma^{-}_{\C} ( R_{1} ,d, \varepsilon , \sigma_{k}^{-} )$. In \cite{GeMa89_02}, G\'erard and Martinez construct some phases $\varphi_{k} \in C^{\infty} ( \R^{2n} ; \R )$ and some symbols $t_{k} \in C^{\infty} ( \R^{2n} ) \cap S_{h}^{0} (1)$ satisfying the general assumptions of Isozaki and Kitada \cite{IsKi85_01} and the following properties.

The phases $\varphi_{k}$ have a holomorphic extension to $\Gamma^{k}$ and satisfy
\begin{equation} \label{a49}
\left\{ \begin{aligned}
& ( \nabla_{x} \varphi_{k} (x, \xi ) )^{2} + V (x) = \xi^{2} , \\
& \partial^{\alpha}_{x} \partial^{\beta}_{\xi} \big( \varphi_{k} (x, \xi ) - x \cdot \xi \big) = \CO \big( \< x \>^{1- \rho - \vert \alpha \vert} \big) ,
\end{aligned} \right.
\end{equation}
uniformly in $\Gamma^{k}$. Moreover, $\Lambda_{\varphi_{k} ( \cdot , \sqrt{E_{0}} \omega )} = \{ (x, \partial_{x} \varphi_{k} ( x , \sqrt{E_{0}} \omega )) \} \subset \Lambda^{-}_{\omega} \cup \Lambda^{+}_{\omega}$.

There exist two symbols $a_{k} (x, \xi ,h) \in C^{\infty} ( \R^{2n} , \C )$ supported inside $\Gamma^{k} \cap \R^{2 n}$, with
\begin{equation*}
a_{k} (x, \xi ,h) \sim \sum_{j = 0}^{+ \infty} a_{k ,j } (x, \xi ) h^{j} ,
\end{equation*}
such that
\begin{equation*}
\big\vert \partial_{x}^{\alpha} \partial_{\xi}^{\beta} a_{k} (x ,\xi ,h) \big\vert \lesssim \< x \>^{- \vert \alpha \vert} \quad \text{ and } \quad \big\vert \partial_{x}^{\alpha} \partial_{\xi}^{\beta} a_{k , j} (x ,\xi) \big\vert \lesssim \< x \>^{- j - \vert \alpha \vert} .
\end{equation*}
Moreover, for some $\delta >0$ with $-1 < \sigma_{k}^{-} - \delta < \sigma_{k}^{+} + \delta < 1$, we have
\begin{equation}
\big\vert \partial_{x}^{\alpha} \partial_{\xi}^{\beta} ( a_{k,0} (x , \xi ) -1 ) \big \vert \lesssim \< x \>^{-\rho - \vert \alpha \vert} ,
\end{equation}
for $(x , \xi ) \in \Gamma^{+} ( 2 R_{1} , d/2 , \sigma_{k}^{+} + \delta ) \cup \Gamma^{-} ( 2 R_{1} , d/2 , \sigma_{k}^{-} - \delta )$. Finally, they extend holomorphically with respect to $X = \vert x \vert$ and $\Xi = \vert \xi \vert$ for $X$ in $\{ \re X > 3 R_{1} , \ \vert \im X \vert < \varepsilon \< \re X \> \}$ and $\Xi$ in a complex neighborhood of $\sqrt{E_{0}}$. Furthermore, their extensions continue to satisfy estimates analogous to the previous ones.

The symbols $t_{k}$ are then defined by
\begin{equation} \label{a51}
t_{k} (x, \xi , h) = e^{- i \varphi_{k} (x, \xi ) /h} ( P - \xi^{2} ) \big( a_{k} ( \cdot , \xi ,h) e^{i \varphi_{k} ( \cdot , \xi ) / h} \big) ,
\end{equation}
and satisfy, for some $\widetilde{\varepsilon} >0$,
\begin{equation} \label{a50}
\big\vert \partial_{x}^{\alpha} \partial_{\xi}^{\beta} t_{k} (x, \xi ,h) \big\vert = \CO \big( e^{- \widetilde{\varepsilon} \< x \> /h} \big) ,
\end{equation}
uniformly with respect to $h$ and $(x , \xi ) \in \Gamma^{+}_{\C} ( 2 R_{1} ,d/2 , \varepsilon , \sigma_{k}^{+} + \delta) \cup \Gamma^{-}_{\C} ( 2 R_{1} ,d /2 , \varepsilon , \sigma_{k}^{-} -\delta )$.

Under the assumption \ref{h7}, G\'erard and Martinez \cite{GeMa89_02} have proved that the scattering amplitude can be written
\begin{equation} \label{a48}
{\mathcal A} ( \omega , \omega ' ,z , h) = \widetilde{c} (z , h) g ( \omega , \omega ' , z ,h) + f ( \omega , \omega ' , z ,h) ,
\end{equation}
where $f ( \omega , \omega ' , z ,h)$ has a holomorphic extension in a (fixed) neighborhood of $E_{0}$,
\begin{equation*}
g ( \omega , \omega ' , z ,h) = \Big( ( P_{\theta} - z )^{-1} U_{i \theta} \big( e^{i \varphi_{2} (x , \sqrt{z} \omega ') /h} t_{2} ( x , \sqrt{z} \omega ' ,h ) \big) , U_{\overline{i \theta}} \big( e^{i \varphi_{1} (x, \sqrt{\overline{z}} \omega) /h} t_{1} (x , \sqrt{\overline{z}} \omega ,h ) \big) \Big) ,
\end{equation*}
and
\begin{equation} \label{a72}
\widetilde{c} (z , h) = \pi ( 2 \pi h )^{- \frac{n+1}{2}} z^{\frac{n-3}{4}} e^{-i\frac{(n-3)\pi}{4}} .
\end{equation}
By assumption, the resonance $z_{\alpha}$ is simple for $h$ small enough. Moreover, Theorem \ref{aaa} implies that $\Pi_{z_{\alpha} , \theta} = \CO ( h^{-M} )$ for $\theta = \nu h \vert \ln h \vert$ and some $M >0$. Then Lemma 5.4 of \cite{BoMi04_01} (see also Proposition 5.1 of \cite{Be99_01} in the case of a well in the island) states that
\begin{align*}
{\mathcal R} : = & \residue \big( {\mathcal A} ( \omega , \omega ' ,z , h) , z = z_{\alpha} \big)   \\
= & - \widetilde{c} ( z_{\alpha} , h) \Big( \Pi_{z_{\alpha} , \theta} \chi  U_{i \theta} \big( e^{i \varphi_{2} (x , \sqrt{z_{\alpha}} \omega ') /h} t_{2} ( x , \sqrt{z_{\alpha}} \omega ' ,h ) \big) ,   \\
&\qquad \qquad \qquad \qquad \qquad \qquad \qquad \chi U_{\overline{i \theta}} \big( e^{i \varphi_{1} (x, \sqrt{\overline{z_{\alpha}}} \omega) /h} t_{1} (x , \sqrt{ \overline{z_{\alpha}}} \omega ,h ) \big) \Big) + \CO ( h^{\infty} ) ,
\end{align*}
where $\chi \in C^{\infty}_{0} ( \R^{n} )$ satisfies $\one_{\vert x \vert \leq 2 R_{1}} \prec \chi \prec \one_{\vert x \vert \leq 3 R_{1}}$ with $R_{0} \gg R_{1}$. In particular, there is no distortion (i.e. $F =0$) on the support of $\chi$ and Theorem \ref{a2} implies
\begin{equation}\label{a67}
{\mathcal R} = \widehat{c} \Big( f , e^{i \varphi_{1} (x , \sqrt{\overline{z_{\alpha}}} \omega ) /h} \chi t_{1} ( x , \sqrt{\overline{z_{\alpha}}} \omega ,h ) \Big) \Big( e^{i \varphi_{2} (x , \sqrt{z_{\alpha}} \omega ') /h} \chi t_{2} ( x , \sqrt{z_{\alpha}} \omega ' ,h ) , \overline{f} \Big) + \CO ( h^{\infty} ) ,
\end{equation}
where $\widehat{c} = - \widetilde{c} ( z_{\alpha} , h ) c (h)$ with $c (h)$ given by \eqref{a24}.

\Subsection{Calculation of $( f , e^{i \varphi_{1} /h} \chi t_{1} )$}
\label{a68}

We will calculate the scalar product $( f , e^{i \varphi_{1} /h} \chi t_{1} )$ by the stationary phase method. First, we will prove that this quantity has an asymptotic expansion in power of $h$ and then calculate the first term using a limit at the origin. We will use arguments close to the ones developed in Section \ref{a53} or \cite[Section 7]{AlBoRa08_01}.

\begin{figure}%[!h]
\begin{center}
\begin{picture}(0,0)%
\includegraphics{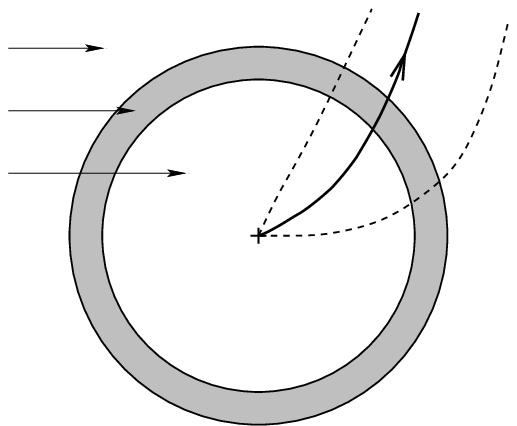}%
\end{picture}%
\setlength{\unitlength}{987sp}%
\begingroup\makeatletter\ifx\SetFigFont\undefined%
\gdef\SetFigFont#1#2#3#4#5{%
  \reset@font\fontsize{#1}{#2pt}%
  \fontfamily{#3}\fontseries{#4}\fontshape{#5}%
  \selectfont}%
\fi\endgroup%
\begin{picture}(10023,8033)(811,-7611)
\put(9151,-286){\makebox(0,0)[lb]{\smash{{\SetFigFont{10}{12.0}{\rmdefault}{\mddefault}{\updefault}$\gamma_{\ell}^{+}$}}}}
\put(5851,-4636){\makebox(0,0)[lb]{\smash{{\SetFigFont{10}{12.0}{\rmdefault}{\mddefault}{\updefault}$0$}}}}
\put(9376,-2011){\makebox(0,0)[lb]{\smash{{\SetFigFont{10}{12.0}{\rmdefault}{\mddefault}{\updefault}$\Omega$}}}}
\put(826,-361){\makebox(0,0)[rb]{\smash{{\SetFigFont{10}{12.0}{\rmdefault}{\mddefault}{\updefault}$\chi = 0$}}}}
\put(901,-1486){\makebox(0,0)[rb]{\smash{{\SetFigFont{10}{12.0}{\rmdefault}{\mddefault}{\updefault}$\supp ( \nabla \chi )$}}}}
\put(901,-2761){\makebox(0,0)[rb]{\smash{{\SetFigFont{10}{12.0}{\rmdefault}{\mddefault}{\updefault}$\chi = 1$}}}}
\end{picture}%
$\qquad \qquad$
\begin{picture}(0,0)%
\includegraphics{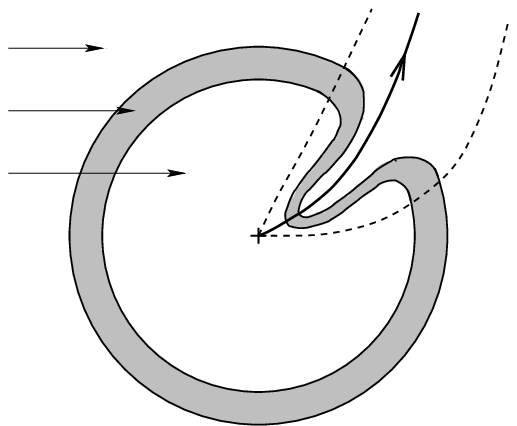}%
\end{picture}%
\setlength{\unitlength}{987sp}%
\begingroup\makeatletter\ifx\SetFigFont\undefined%
\gdef\SetFigFont#1#2#3#4#5{%
  \reset@font\fontsize{#1}{#2pt}%
  \fontfamily{#3}\fontseries{#4}\fontshape{#5}%
  \selectfont}%
\fi\endgroup%
\begin{picture}(10023,8033)(811,-7611)
\put(9151,-286){\makebox(0,0)[lb]{\smash{{\SetFigFont{10}{12.0}{\rmdefault}{\mddefault}{\updefault}$\gamma_{\ell}^{+}$}}}}
\put(5851,-4636){\makebox(0,0)[lb]{\smash{{\SetFigFont{10}{12.0}{\rmdefault}{\mddefault}{\updefault}$0$}}}}
\put(826,-361){\makebox(0,0)[rb]{\smash{{\SetFigFont{10}{12.0}{\rmdefault}{\mddefault}{\updefault}$\widetilde{\chi} = 0$}}}}
\put(901,-1486){\makebox(0,0)[rb]{\smash{{\SetFigFont{10}{12.0}{\rmdefault}{\mddefault}{\updefault}$\supp ( \nabla \widetilde{\chi} )$}}}}
\put(901,-2761){\makebox(0,0)[rb]{\smash{{\SetFigFont{10}{12.0}{\rmdefault}{\mddefault}{\updefault}$\widetilde{\chi} = 1$}}}}
\put(9376,-2011){\makebox(0,0)[lb]{\smash{{\SetFigFont{10}{12.0}{\rmdefault}{\mddefault}{\updefault}$\Omega$}}}}
\end{picture}%
\end{center}
\caption{The functions $\chi$ and $\widetilde{\chi}$.}
\label{f2}
\end{figure}

Denote $u = e^{i \varphi_{1} (x , \sqrt{\overline{z_{\alpha}}} \omega ) /h} a_{1} ( x , \sqrt{\overline{z_{\alpha}}} \omega ,h )$ and $v  = e^{i \varphi_{1} (x , \sqrt{\overline{z_{\alpha}}} \omega ) /h} t_{1} ( x , \sqrt{\overline{z_{\alpha}}} \omega ,h )$. From Theorem \ref{a2} $ii)$ and \eqref{a51}, we have
\begin{equation}
( f , \chi v ) = \big( f , \chi ( P- \overline{z_{\alpha}} ) u \big) = \big( ( P- z_{\alpha} ) f , \chi u \big) + \big( f , [ \chi , P ] u \big) = - \big( f , [ P , \chi ] u \big) .
\end{equation}
From \eqref{a50} and the choice of $\chi$, $( P - \overline{z_{\alpha}} ) u = 0$ microlocally near $\Gamma^{+} ( 2 R_{1} ,d/2 , \sigma_{1}^{+} + \delta)$. Moreover, since $z_{\alpha}$ has an asymptotic expansion in power of $h$, we can write, microlocally near $\Gamma^{+} ( 2 R_{1} ,d/2 , \sigma_{1}^{+} + \delta)$,
\begin{equation*}
u = \widetilde{a} (x, h) e^{i \varphi_{1} (x , \sqrt{E_{0}} \omega ) /h} ,
\end{equation*}
where $\widetilde{a}$ has an asymptotic expansion in power of $h$. Note that $( \supp \nabla \chi \times \R^{n} ) \cap \bigcup_{\ell} \gamma_{\ell}^{+} \subset \Gamma^{+} ( 2 R_{1} ,d/2 , \sigma_{1}^{+} + \delta)$. Using Maslov's theory, we can extend the function $u$ near $\Omega$, a small neighborhood of $\bigcup_{\ell} \gamma_{\ell}^{+} \cap ( B (0 , 3 R_{0} ) \times \R^{n} )$, such that $u$ is still a solution of $(P - \overline{z_{\alpha}} ) u =0$ microlocally in $\Omega$.  Let $\widetilde{\chi} ( x, \xi ) \in C^{\infty} ( T^{*} \R^{n} )$ be such that $\widetilde{\chi} (x, \xi ) = \chi (x)$ out of $\Omega$ (see Figure~\ref{f2}). In particular, $( P - \overline{z_{\alpha}} ) u =0$ microlocally near the support of $\chi - \widetilde{\chi}$. So, we have
\begin{align} \label{a52}
( f , \chi v ) =& - \big( f , [ P , \Op ( \widetilde{\chi} ) ] u \big) - \big( (P - z_{\alpha} ) f , \Op ( \chi - \widetilde{\chi}) u \big) + \big(f , \Op ( \chi - \widetilde{\chi}) (P - \overline{z_{\alpha}} )  u \big)   \nonumber \\
=& - \big( f , [ P , \Op ( \widetilde{\chi} ) ] u \big) + \CO (h^{\infty} ). 
\end{align}

On the other hand, since $\overline{z_{\alpha}} = E_{0} + \CO (h)$, the microsupport of $[ P , \Op ( \widetilde{\chi} ) ] u$ satisfies
\begin{align*}
\MS \big( [ P , \Op ( \widetilde{\chi} ) ] u \big) &\subset \Lambda_{\varphi_{1} ( \cdot , \sqrt{E_{0}} \omega ) } \cap \supp \nabla \widetilde{\chi}   \\
&\subset \big( \Lambda^{-}_{\omega} \cap \Gamma^{-} ( R_{1} , d , \sigma^{-}_{1} /2) \big) \cup \big( \Lambda^{+}_{\omega} \cap \Omega \big) .
\end{align*}
Moreover, Theorem~\ref{a2} gives $\MS ( f) \subset \Lambda_{+}$. Then, modulo $\CO (h^{\infty} )$, the non-zero contributions to $( f , [ P , \Op ( \widetilde{\chi} ) ] u )$ comes from the values of the functions $f$ and $[ P , \Op ( \widetilde{\chi} ) ] u$ microlocally on the set $\bigcup_{\ell} \gamma^{+}_{\ell}$ (which constitute the intersection of the two microsupports). Let $g_{\ell}^{+}$ be $C_{0}^{\infty} ( T^{*} \R^{n} )$ functions with support in a small enough neighborhood of $\gamma_{\ell}^{+} \cap ( B ( 0 ,3 R_{0}) \times \R^{n} )$ such that $g_{\ell}^{+} =1$ in a similar neighborhood. Then, \eqref{a52} becomes
\begin{equation} \label{a54}
( f , \chi v ) = - \sum_{\ell =1}^{N_{+}} \big( f , \Op ( g_{\ell}^{+} ) [ P , \Op ( \widetilde{\chi} ) ] u \big) + \CO (h^{\infty} ). 
\end{equation}

We now compute $\Op ( g_{\ell}^{+} ) [ P , \Op ( \widetilde{\chi} ) ] u$. From Proposition C.1 of \cite{AlBoRa08_01}, the Lagrangian manifold $\Lambda_{\omega}^{+}$ has a nice projection with respect to $x$ in a neighborhood of any point of  $\gamma_{\ell}^{+}$ close to $(0,0)$. Then, Maslov's theory implies that $u$ can be written as
\begin{equation*}
u (x) = a_{\ell}^{+} (x, h) e^{i \psi_{\ell}^{+} (x) /h} ,
\end{equation*}
microlocally in such a neighborhood. From the construction of \cite{IsKi85_01} and \cite{GeMa89_02}, we see that
\begin{equation} \label{a56}
\psi_{\ell}^{+} ( x_{\ell}^{+} (t)) = x_{\ell}^{+} (t) \xi_{\ell}^{+} (t) - \int_{t}^{T_{\ell}^{+}} x_{\ell}^{+} (s) \partial_{x} V (x_{\ell}^{+} (s)) \, d s ,
\end{equation}
for some $T_{\ell}^{+} >0$ large enough (equal to $+ \infty$ in the short range case). The symbol $a_{\ell}^{+}$ has an asymptotic expansion $a_{\ell}^{+} (x,h) \sim \sum_{j} a_{\ell , j}^{+} (x) h^{j}$ with $a_{\ell , 0}^{+} ( x_{\ell}^{+} (t)) \neq 0$. Moreover, in the short range case, Equation (7.12) of \cite{AlBoRa08_01} gives
\begin{equation} \label{a64}
a_{\ell , 0}^{+} ( x_{\ell}^{+} (t) ) = e^{i \nu_{\ell}^{+} \pi /2} 2^{\frac{1}{2}} E_{0}^{\frac{1}{4}} ( D_{\ell}^{+} (t) )^{- \frac{1}{2}} e^{- t ( \sum \lambda_{k} /2 + \lambda \cdot \alpha )} ,
\end{equation}
where $\nu_{\ell}^{+}$ is the Maslov index of the curve $\gamma_{\ell}^{+}$ and $D_{\ell}^{+} (t)$ is the Maslov's determinant
\begin{equation*}
D_{\ell}^{+} (t) = \bigg\vert \det \frac{\partial x_{+} ( t , z , \omega )}{\partial (t,z)} \vert_{z = z_{\ell}^{+}} \bigg\vert.
\end{equation*}
Moreover, from Section 6 of \cite{AlBoRa08_01}, we know that
\begin{equation} \label{a65}
D_{\ell}^{+} = \lim_{t \to - \infty} D_{\ell}^{+} (t) e^{t ( \sum \lambda_{k} - 2 \lambda_{\ell}^{+})} ,
\end{equation}
exists and satisfies $0 < D_{\ell}^{+} < + \infty$. So,
\begin{equation}
\Op (g_{\ell}^{+} ) [ P , \Op ( \widetilde{\chi} ) ] u = \widetilde{a}_{\ell}^{+} (x ,h ) e^{i \psi_{\ell}^{+} (x) /h} ,
\end{equation}
with
\begin{equation*}
\widetilde{a}_{\ell}^{+} (x ,h) \sim \sum_{j = 0}^{+ \infty} \widetilde{a}_{\ell , j}^{+} (x) h^{1 + j} ,
\end{equation*}
and
\begin{equation} \label{a60}
\widetilde{a}_{\ell , 0}^{+} (x) = - i ( \{ p , \widetilde{\chi} \} g_{\ell}^{+} ) ( x, \partial_{x} \psi_{\ell}^{+} (x)) a_{\ell , 0}^{+} (x) .
\end{equation}

Since the support of $g_{\ell}^{+} (x, \xi ) \partial_{x, \xi} \widetilde{\chi} (x, \xi )$ is close enough to $(0,0)$, Theorem~\ref{a2} $iv)$ and \eqref{a54} imply that
\begin{equation} \label{a55}
( f , \chi v ) = - \sum_{\ell =1}^{N_{+}} \int d (x,h) \overline{\widetilde{a}_{\ell}^{+} (x ,h)} e^{i ( \varphi_{+} (x) - \psi_{\ell}^{+} (x) ) /h} d x + \CO (h^{\infty} ). 
\end{equation}
We proceed now as in \eqref{a28}. In the support of the symbol $\widetilde{a}_{\ell}^{+}$, the critical points of the phase function $\varphi_{+} - \psi_{\ell}^{+}$ (i.e. the points $x$ such that $\partial_{x} \varphi_{+} (x) = \partial_{x} \psi_{\ell}^{+} (x)$) are the points in the spacial projection of $\gamma_{\ell}^{+}$. Since this intersection $\Lambda_{+} \cap \Lambda_{\omega}^{+} = \gamma_{\ell}^{+}$ is transverse from the assumption \ref{h8}, the phase $\varphi_{+} - \psi_{\ell}^{+}$ is non degenerate in the directions transverse to $\pi_{x} \gamma_{\ell}^{+}$. Therefore, performing the method of the stationary phase in the orthogonal directions of $\pi_{x} \gamma^{+}_{\ell}$ (as in \eqref{a28}) and parameterizing the curve $\pi_{x} \gamma^{+}_{\ell}$ by $x_{\ell}^{+} (t)$, \eqref{a55} gives
\begin{equation}
( f , \chi v ) = - \sum_{\ell =1}^{N_{+}} \int b_{\ell}^{+} (t,h) e^{i ( \varphi_{+} ( x_{\ell}^{+} (t) ) - \psi_{\ell}^{+} ( x_{\ell}^{+} (t) ) ) /h} d t + \CO (h^{\infty} ) .
\end{equation}
with $b_{\ell}^{+} (t,h) \sim \sum_{j \geq 0} b_{\ell , j}^{+} (t) h^{\frac{n+1}{2} +j}$ and
\begin{equation*}
b_{\ell , 0}^{+} (t) = ( 2 \pi )^{\frac{n-1}{2}} \frac{e^{i \frac{\pi}{4} \sgn ( \varphi_{+} - \psi_{\ell}^{+} ) ''_{\vert_{( \pi_{x} \gamma_{\ell}^{+} )^{\perp}}}}}{\big\vert \det ( \varphi_{+} - \psi_{\ell}^{+} )_{\vert_{( \pi_{x} \gamma^{+}_{\ell} )^{\perp}}} \big\vert^{\frac{1}{2}}} \vert \partial_{t} x_{\ell}^{+} (t) \vert d_{0} ( x_{\ell}^{+} (t) ) \overline{\widetilde{a}_{\ell ,0}^{+} ( x_{\ell}^{+} (t) )} .
\end{equation*}

Since $\gamma_{\ell}^{+} \in \Lambda_{+} \cap \Lambda_{\omega}^{+}$, $\varphi_{+} ( x_{\ell}^{+} (t) )$ and $\psi_{\ell}^{+} ( x_{\ell}^{+} (t) )$ have the same derivative (with respect to $t$), and \eqref{a56} gives
\begin{equation*}
\varphi_{+} ( x_{\ell}^{+} (t) ) - \psi_{\ell}^{+} ( x_{\ell}^{+} (t) ) = \int_{- \infty}^{T} x_{\ell}^{+} (s) \partial_{x} V (x_{\ell}^{+} (s)) \, d s = S_{\ell}^{+},
\end{equation*}
for all $t \geq 0$. Then, combining with \eqref{a60}, we get
\begin{equation} \label{a66}
( f , \chi v ) = \sum_{\ell =1}^{N_{+}} e^{i S_{\ell}^{+} /h} h^{\frac{n+1}{2}} \widetilde{b}_{\ell}^{+} (h) ,
\end{equation}
with $\widetilde{b}_{\ell}^{+} (h) \sim \sum_{j \geq 0} \widetilde{b}_{\ell , j}^{+} h^{j}$ and
\begin{align}
\widetilde{b}_{\ell , 0}^{+} = - i ( 2 \pi )^{\frac{n-1}{2}} \int \frac{e^{i \frac{\pi}{4} \sgn ( \varphi_{+} - \psi_{\ell}^{+} ) ''_{\vert_{( \pi_{x} \gamma_{\ell}^{+} )^{\perp}}}}}{\big\vert \det ( \varphi_{+} - \psi_{\ell}^{+} )_{\vert_{( \pi_{x} \gamma^{+}_{\ell} )^{\perp}}} \big\vert^{\frac{1}{2}}} \vert \partial_{t} & x_{\ell}^{+} (t) \vert d_{0} ( x_{\ell}^{+} (t) )   \nonumber \\
& \times \overline{a_{\ell , 0}^{+} ( x_{\ell}^{+} (t) )} \partial_{t} \widetilde{\chi} ( x_{\ell}^{+} (t) , \xi_{\ell}^{+} (t) ) \, d t .   \label{a63}
\end{align}

From \eqref{a52}, $( f , \chi v )$ does not depend on $\widetilde{\chi}$, modulo $\CO ( h^{\infty} )$. In particular, changing $\widetilde{\chi}$ in a neighborhood of a fixed curve $\gamma_{\ell}^{+}$, we obtain that each $b_{\ell , 0}^{+}$ does not depend on $\widetilde{\chi}$. From Proposition~C.1 of \cite{AlBoRa08_01}, we have, up to a linear change of variables in $\R^{n}$,
\begin{equation*}
( \varphi_{+} - \psi_{\ell}^{+} ) '' (x (t)) =  \diag \big( \lambda_{1} , \cdots , \lambda_{j ( \ell ) -1}, 0 , \lambda_{j ( \ell )+1} , \cdots , \lambda_{n} \big) + \CO ( e^{ - \varepsilon t}) ,
\end{equation*}
where $j ( \ell )$ is such that $\lambda_{j ( \ell )} = \lambda_{\ell}^{+}$. Since $x_{\ell}^{+} (t) = g_{\lambda_{\ell}^{+}}^{\ell ,+} e^{\lambda_{\ell}^{+} t} + \CO ( e^{( \lambda_{\ell}^{+} + \varepsilon )t} )$ as expandible symbol (see \cite[Definition 5.2]{BoFuRaZe07_01}), this implies
\begin{gather}
\big\vert \det ( \varphi_{+} - \psi_{\ell}^{+} )_{\vert_{( \pi_{x} \gamma^{+}_{\ell} )^{\perp}}} \big\vert^{\frac{1}{2}} (x_{\ell}^{+} (t)) = \Big( \prod_{j \neq j ( \ell )} \lambda_{j} \Big)^{\frac{1}{2}} + \CO ( e^{\varepsilon t}) ,  \label{a58} \\
\sgn ( \varphi_{+} - \psi_{\ell}^{+} )_{\vert_{( \pi_{x} \gamma^{+}_{\ell} )^{\perp}}}'' (x_{\ell}^{+} (t)) = n-1 ,  \label{a59} \\
\vert \partial_{t} x_{\ell}^{+} (t) \vert = \vert g_{\lambda_{\ell}^{+}}^{\ell , +} \vert \lambda_{\ell}^{+} e^{\lambda_{\ell}^{+} t} \big( 1 + \CO ( e^{\varepsilon t} ) \big) , \label{a61}
\end{gather}
as $t \to - \infty$. On the other hand, \eqref{a33} gives
\begin{equation} \label{a62}
d_{0} ( x_{\ell}^{+} (t)) = ( g^{\ell , +} )^{\alpha} e^{\lambda \cdot \alpha t} \big( 1 + \CO ( e^{\varepsilon t} ) \big) .
\end{equation}

We first consider the long range case ($\rho >0$). If $( g^{\ell , +} )^{\alpha} = 0$, then \eqref{a63} and \eqref{a62} imply that $b_{\ell , 0}^{+} =0$. We will now prove that $b_{\ell , 0}^{+} \neq 0$ if $( g^{\ell , +} )^{\alpha} \neq 0$. Let $T >0$ be sufficiently large such that the quantities in \eqref{a58}, \eqref{a59} and \eqref{a62} do not vanish and \eqref{a61} holds at $t = -T$. Then, if $\widetilde{\chi} ( x_{\ell}^{+} (t) )$ satisfies $\partial_{t} \widetilde{\chi}( x_{\ell}^{+} (t) ) \leq 0$ and has its support close enough to $T$, the previous discussion, $a_{\ell , 0}^{+} ( x_{\ell}^{+} (T)) \neq 0$ and \eqref{a63} imply that $b_{\ell , 0}^{+} \neq 0$.

Let us now consider the short range case ($\rho > 1$). Assume that the support of $\partial_{t} \widetilde{\chi} (x_{\ell}^{+} (t))$ is sufficiently negative. Then, the formula \eqref{a63} and the estimates \eqref{a64}, \eqref{a65}, \eqref{a58}, \eqref{a59}, \eqref{a61} and \eqref{a62} give
\begin{equation*}
\widetilde{b}_{\ell , 0}^{+} = - i ( 2 \pi )^{\frac{n-1}{2}} \frac{2^{\frac{1}{2}} e^{i (n-1) \frac{\pi}{4}}}{\big( \prod_{j \neq j ( \ell )} \lambda_{j} \big)^{\frac{1}{2}}} \vert g_{\lambda_{\ell}^{+}}^{\ell , +} \vert \lambda_{\ell}^{+} ( g^{\ell , +} )^{\alpha} e^{- i \nu_{\ell}^{+} \pi /2} E_{0}^{\frac{1}{4}} ( D_{\ell}^{+} )^{- \frac{1}{2}} \int \partial_{t} \widetilde{\chi} ( x_{\ell}^{+} (t) ) ( 1 + o (1)) \, d t ,
\end{equation*}
where the $o (1)$ does not depend on $\widetilde{\chi}$. Now, we take a sequence of functions $\widetilde{\chi}$ such that the support of $\partial_{t} \widetilde{\chi} ( x_{\ell}^{+} (t))$ goes to $- \infty$ and $\partial_{t} \widetilde{\chi} (x_{\ell}^{+} (t)) \leq 0$ (see Figure~\ref{f2}). Since $b_{\ell , 0}^{+}$ does not depend on $\widetilde{\chi}$, the previous expression gives
\begin{equation} \label{a69}
\widetilde{b}_{\ell , 0}^{+} = i ( 2 \pi )^{\frac{n-1}{2}} \frac{2^{\frac{1}{2}} e^{i (n-1) \frac{\pi}{4}}}{\big( \prod_{j \neq j ( \ell )} \lambda_{j} \big)^{\frac{1}{2}}} \vert g_{\lambda_{\ell}^{+}}^{\ell , +} \vert \lambda_{\ell}^{+} ( g^{\ell , +} )^{\alpha} e^{- i \nu_{\ell}^{+} \pi /2} E_{0}^{\frac{1}{4}} ( D_{\ell}^{+} )^{- \frac{1}{2}} .
\end{equation}

\Subsection{End of the proof of Theorem \ref{a47}}

Following the approach of Section \ref{a68}, one can prove that
\begin{equation} \label{a70}
\Big( e^{i \varphi_{2} (x , \sqrt{z_{\alpha}} \omega ') /h} \chi t_{2} ( x , \sqrt{z_{\alpha}} \omega ' ,h ) , \overline{f} \Big) = \sum_{k =1}^{N_{-}} e^{i S_{k}^{-} /h} h^{\frac{n+1}{2}} \widetilde{d}_{k}^{-} (h) ,
\end{equation}
with $\widetilde{d}_{k}^{-} (h) \sim \sum_{j \geq 0} \widetilde{d}_{k , j}^{-} h^{j}$ and $\widetilde{d}_{k , j}^{-} = 0$ if and only if $( g^{k , -} )^{\alpha} = 0$. Moreover, in the short range case, we have
\begin{equation} \label{a71}
\widetilde{d}_{k , 0}^{-} = i ( 2 \pi )^{\frac{n-1}{2}} \frac{2^{\frac{1}{2}} e^{i (n-1) \frac{\pi}{4}}}{\big( \prod_{j \neq j ( k )} \lambda_{j} \big)^{\frac{1}{2}}} \vert g_{\lambda_{k}^{-}}^{k , -} \vert \lambda_{k}^{-} ( g^{k , -} )^{\alpha} e^{- i \nu_{k}^{-} \pi /2} E_{0}^{\frac{1}{4}} ( D_{k}^{-} )^{- \frac{1}{2}} .
\end{equation}
Then, combining the representation of the residue given in \eqref{a67} with the constants given in \eqref{a24} and \eqref{a72}, and the scalar products \eqref{a66} and \eqref{a70}, we obtain 
\begin{equation*}
\residue \big( {\mathcal A} ( \omega , \omega ' ,z , h) , z = z_{\alpha} \big) = h^{- \vert \alpha \vert + \frac{1}{2}} \sum_{k=1}^{N_{-}} \sum_{\ell =1}^{N_{+}} a_{k, \ell} (h) e^{i ( S_{k}^{-} + S_{\ell}^{+} ) /h} ,
\end{equation*}
with $a_{k, \ell} (h) \sim \sum_{j\geq 0} a_{k, \ell}^{j} h^{j}$ and $a_{k , \ell}^{0} = 0$ if and only if $( g^{k , -} )^{\alpha} ( g^{\ell , +} )^{\alpha} =0$. Moreover, in the short range case, \eqref{a69} and \eqref{a71} imply
\begin{align}
a_{k , \ell}^{0} =& \frac{e^{- i \frac{\pi}{2} ( \vert \alpha \vert - \frac{1}{2} )}}{\sqrt{2 \pi} \alpha !} E_{0}^{\frac{n-1}{4}} ( \lambda_{k}^{-} \lambda_{\ell}^{+} )^{\frac{3}{2}} \prod_{j=1}^{n} \lambda_{j}^{\alpha_{j} - \frac{1}{2}}    \nonumber \\
&\times e^{- i \nu_{k}^{-} \pi /2} e^{- i \nu_{\ell}^{+} \pi /2} ( D_{k}^{-} D_{\ell}^{+} )^{- \frac{1}{2}} ( g^{k , -} )^{\alpha} ( g^{\ell , +} )^{\alpha} \vert g_{\lambda_{k}^{-}}^{k , -} \vert \vert g_{\lambda_{\ell}^{+}}^{\ell , +} \vert .
\end{align}

\section{Large time behavior of the Schr\"odinger group}
\label{k3}

In this section, we prove a resonance expansion for the cut-off Schr\"{o}dinger propagator. The proof relies on the resolvent estimate in Theorem \ref{aaa} and on standard arguments.

\begin{theorem}[Schr\"{o}dinger group expansion]\sl \label{c1}
Assume \ref{A1}--\ref{A3}.
Let $\mu >0$ be different from  $\sum_{j=1}^{n} (\alpha_{j} + \frac{1}{2} ) \lambda_{j}$ for all $\alpha \in \N^{n}$. Let $\chi \in C^{\infty}_{0} ( \R^{n} )$ and $\psi \in C^{\infty}_{0} ( [E_{0} - \varepsilon , E_{0} + \varepsilon ] )$ for some $\varepsilon > 0$ small enough. Then, there exists $K = K ( \mu ) >0$ such that
\begin{align*}
\chi e^{- i t P /h} \chi \psi ( P )= \sum_{z_{\alpha} \in \res (P) \cap D (E_{0} , \mu h)} - \chi \residue \big( e^{- i t z /h} (P - z)^{-1} & , z = z_{\alpha} \big) \chi \psi (P)   \\
&+ \CO ( h^{\infty} ) + \CO ( e^{- \mu t} h^{- K} ) ,
\end{align*}
for all $t \geq 0$. In particular, if all the $z_{\alpha}^{0}$ in $D ( E_{0} , \mu h )$ are simple, we have
\begin{equation*}
\chi e^{- i t P /h} \chi \psi ( P ) = \sum_{z_{\alpha} \in \res (P) \cap D (E_{0} , \mu h)} e^{- i t z_{\alpha} /h} \chi \Pi_{z_{\alpha}} \chi \psi (P) + \CO ( h^{\infty} ) + \CO ( e^{- \mu t} h^{- K} ) ,
\end{equation*}
for all $t \geq 0$. Here, $\Pi_{z_{\alpha}}$ is the generalized spectral projection associated to $z_{\alpha}$ and described in Theorem~\ref{a2}.
\end{theorem}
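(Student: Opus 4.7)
The plan is to represent $\chi e^{-itP/h}\chi\psi(P)$ as a contour integral of the cut-off resolvent via Stone's formula, then deform the contour into the lower half-plane to the line $\im z = -\mu h$, picking up the resonances as poles. The polynomial resolvent bound of Theorem~\ref{aaa} is what controls the integrand on the deformed contour.

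First I would reduce to an energy-localized propagator. Pick $\psi_{1} \in C^{\infty}_{0} ([E_{0}-\varepsilon,E_{0}+\varepsilon])$ with $\psi_{1} = 1$ near $\supp\psi$. Since $(1-\psi_{1}(P))\psi(P) = 0$ by functional calculus, $(1-\psi_{1}(P))\chi\psi(P) = (1-\psi_{1}(P))[\chi,\psi(P)]$; the commutator is an $h$-pseudodifferential operator microsupported in $p^{-1}(\supp\psi)$, while $(1-\psi_{1}(P))$ is microsupported away from $\supp\psi$, so the product is $\CO(h^{\infty})$ in operator norm. As $e^{-itP/h}$ is unitary,
\begin{equation*}
\chi e^{-itP/h}\chi\psi(P) = \chi f_{t}(P)\chi\psi(P) + \CO(h^{\infty})
\end{equation*}
uniformly in $t \geq 0$, with $f_{t}(E) = e^{-itE/h}\psi_{1}(E) \in C^{\infty}_{0}(\R)$. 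Stone's formula then yields
\begin{equation*}
\chi f_{t}(P)\chi = \frac{1}{2i\pi}\int_{\R} e^{-itE/h}\psi_{1}(E)\bigl[R^{+}(E+i0) - R^{-}(E-i0)\bigr]\, dE,
\end{equation*}
where $R^{\pm}(z) = \chi(P-z)^{-1}\chi$ is the cut-off resolvent in the corresponding open half-plane.

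The core step is the contour deformation. Fix an almost analytic extension $\widetilde{\psi}_{1} \in C^{\infty}_{0}(\C)$ of $\psi_{1}$, with $\bar{\partial}\widetilde{\psi}_{1}(z) = \CO(|\im z|^{\infty})$. Choose $\theta = \nu h |\ln h|$ with $\nu$ large enough so that Theorem~\ref{aaa} applies in the strip $\{|\im z| \leq 2\mu h\}\cap\{|\re z-E_{0}|\leq\varepsilon\}$; there $R^{+}$ coincides with $\chi(P_{\theta}-z)^{-1}\chi$ and extends meromorphically, with poles exactly at the resonances $z_{\alpha}$. Applying the Cauchy--Green formula between $\R$ and $\R - i\mu h$ gives
\begin{equation*}
\int_{\R} e^{-itE/h}\psi_{1}(E) R^{+}(E+i0)\, dE = -2i\pi \sum_{z_{\alpha} \in D(E_{0},\mu h)} \res\bigl(e^{-itz/h}\widetilde{\psi}_{1}(z)R^{+}(z),z_{\alpha}\bigr) + I_{\rm def} + I_{\rm St},
\end{equation*}
where $I_{\rm def}$ runs along $\R - i\mu h$ and $I_{\rm St}$ is the $\bar{\partial}$-contribution over the strip. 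Because $\mu$ is not a forbidden resonant sum, Theorem~\ref{e17} keeps the resonances $z_{\alpha}$ at distance $\gtrsim h$ from the line $\R - i\mu h$, so Theorem~\ref{aaa} yields $\Vert R^{+}(z)\Vert \lesssim h^{-K}$ there; combined with $|e^{-itz/h}| = e^{-\mu t}$ on this line, $I_{\rm def} = \CO(e^{-\mu t}h^{-K})$. The Stokes integrand is $\CO(|\im z|^{N}h^{-K})$ for every $N$, so $I_{\rm St} = \CO(h^{\infty})$. The same deformation for $R^{-}$ is easier: it is holomorphic on the whole lower half-plane with $\Vert R^{-}(z)\Vert \leq 1/|\im z|$, no residues are crossed, and the deformed integral is again $\CO(e^{-\mu t}h^{-K})$.

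Assembling the contributions and dividing by $2i\pi$, then multiplying by $\psi(P)$ on the right, produces the expansion. At a simple resonance $\res_{z=z_{\alpha}}(P-z)^{-1} = -\Pi_{z_{\alpha}}$ by definition of the spectral projection, and $\widetilde{\psi}_{1}(z_{\alpha}) = 1$ since $z_{\alpha} = E_{0}+\CO(h)$ lies in the interior of $\{\psi_{1} = 1\}$, which gives the cleaner form of the statement; the general formula using $\res(e^{-itz/h}(P-z)^{-1},z_{\alpha})$ absorbs higher multiplicities uniformly. The main obstacle is to reconcile the polynomial bound on $R^{+}$, which degrades as one approaches any resonance, with the requirement that the deformation contour stay uniformly $\gtrsim h$ away from every pole in the strip of width $\mu h$; this is exactly what the non-resonance assumption $\mu \neq \sum_{j}(\alpha_{j}+\tfrac{1}{2})\lambda_{j}$, together with the sharp localization provided by Theorems~\ref{e17} and~\ref{aaa}, makes possible.
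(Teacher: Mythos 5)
Your proof is correct in its overall strategy and reaches the right conclusion, but it takes a genuinely different route from the paper. The paper performs a true contour deformation along a rectangle $\Gamma_1\cup\cdots\cup\Gamma_5$ with vertical sides at $\re z=E_0\pm 2\varepsilon$, and then estimates the two vertical-segment integrals $I_2,I_4$ by a pseudodifferential argument: the auxiliary cutoff $k$ (with $k=1$ near $E_0\pm 2\varepsilon$, $k=0$ near $\supp\psi$) lets one write $k(P)\chi\psi(P)=\CO(h^\infty)$ and control the distortion error $(P-P_{\pm\theta})(P-z)^{-1}(1-k)(P)\chi$. Your Helffer--Sj\"ostrand / Cauchy--Green approach with an almost analytic extension $\widetilde{\psi}_1$ replaces those two vertical segments and their pseudodifferential estimates by a single $\bar\partial$-integral, which is a real simplification. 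Both routes isolate the same horizontal segment at $\im z=-\mu h$, estimated the same way via Theorem~\ref{aaa}, and both rely on $\mu\neq\sum(\alpha_j+\tfrac12)\lambda_j$ together with Theorems~\ref{e17} and~\ref{aaa} to keep the deformed contour at distance $\gtrsim h$ from every resonance.

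There is one imprecision you should repair. You assert that the Stokes integrand is $\CO(|\im z|^N h^{-K})$ uniformly over the strip, but $R^+(z)$ is not $\CO(h^{-K})$ there: the strip $\{-\mu h<\im z<0\}$ contains the resonances, where the resolvent blows up. If you use a generic almost analytic extension, the Stokes integral $\int\bar\partial\widetilde{\psi}_1\cdot R^+\,dL$ is only absolutely convergent near simple poles; near a resonance of higher multiplicity $m$ (allowed in the first statement of the theorem) the bound of Theorem~\ref{aaa} gives $\|R^+\|\lesssim h^{-K}|z-z_\alpha|^{-m}$, which is not locally integrable for $m\geq 2$. The clean fix is to choose $\widetilde{\psi}_1$ so that it is identically $1$ on an $h$-independent complex rectangle, e.g. $[E_0-\delta,E_0+\delta]+i[-\delta,0]$, which contains $D(E_0,\mu h)$ for $h$ small; then $\bar\partial\widetilde{\psi}_1$ vanishes on a neighbourhood of every resonance, the Stokes integrand is smooth, and your estimate $\bar\partial\widetilde{\psi}_1=\CO(|\im z|^N)\leq C_N(\mu h)^N$ together with the polynomial resolvent bound on $\supp\bar\partial\widetilde{\psi}_1$ gives $I_{\rm St}=\CO(h^\infty)$ as you intended. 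This also makes the identity $\widetilde{\psi}_1(z_\alpha)=1$, which you invoke to clean up the residue, automatic rather than asymptotic.

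One more small point: when you write $\|R^-(z)\|\leq 1/|\im z|$ on $\R-i\mu h$ you get $\CO(h^{-1})$, which is fine since it is absorbed into the $\CO(e^{-\mu t}h^{-K})$ remainder, but note that the cut-off resolvent bound from Theorem~\ref{aaa} applied on $\R-i\mu h$ is $\CO(h^{-K'})$ with $K'$ possibly larger than $K$ because of the factors $\prod|z-z_\alpha|^{-1}\sim h^{-m}$; the theorem's $K=K(\mu)$ is meant to absorb exactly this, so it is harmless, but the exponent in the final error is not the same $K$ that appears in Theorem~\ref{aaa}.
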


\begin{remark}\sl
Note that the previous expansions make sense only for $t > \frac{K}{\mu} \vert \ln h \vert$.
\end{remark}

One might think that the resonance expansion holds for shorter times. But, in fact, it is not possible to do much better. This follows from the paper of De Bi\`evre and Robert \cite{DeRo03_01} which is stated with slightly different hypotheses. In the one dimensional case, they have proved that the coherent states propagate through a maximum of the potential for times of order $\frac{1}{\lambda_{1}} \vert \ln h \vert$ and that they stay at $(0,0)$ before. On the other hand, the sum of the generalized spectral projections over the resonances appearing in Theorem~\ref{c1} can not be microlocalized only at $(0,0)$ thanks to Theorem~\ref{a2}. Thus, if the resonance expansion with a small error holds at time $t \geq 0$, we have necessarily $t \geq \frac{1}{\lambda_{1}} \vert \ln h \vert$ in the one dimensional case. If we only want to prove that $t \to + \infty$ as $h \to 0$, we can more simply apply the standard propagation of singularities with an initial data microlocalized in $\Lambda_{-} \setminus \{ ( 0, 0) \}$.

There is also a simplest way to justify this phenomena. Let $\mu$ be such that $\sum \lambda_{j} /2 < \mu < \lambda_{1} + \sum \lambda_{j} /2$. Then, $z_{0}$ is the unique resonance in $D ( E_{0} , \mu h)$ for $h$ small enough and $z_{0}^{0}$ is always simple. Assume that, for some $t \geq 0$, we can write
\begin{equation} \label{k8}
\chi e^{- i t P /h} \chi \psi ( P )= e^{- i t z_{0} /h} \chi \Pi_{z_{0}} \chi \psi (P) + R ,
\end{equation}
where $R$ is small. The left hand side is of order $1$ since the propagator is unitary. On the other hand, from Theorem \ref{e17} and Theorem \ref{a2}, the right hand side is of order $e^{- t \sum \lambda_{j} /2} h^{-\frac{n}{2}}$. Then, \eqref{k8} implies $t \geq \frac{n}{\sum \lambda_{j}} \vert \ln h \vert$. Remark that this critical time coincides with the one obtained by De Bi\`evre and Robert in the one dimensional case.

The situation is different for the well in the island case which was treated by Nakamura, Stefanov and Zworski \cite{NaStZw03_01}. In that setting, the cut-off Schr\"{o}dinger group is well approximated by the resonance expansion after a fix time. This is in adequacy with the geometrical interpretation since a fix time is enough to dispel the part of the initial data which is not localized in the well.

Nevertheless, G\'erard and Sigal \cite{GeSi92_01} have proved that the Schr\"{o}dinger equation with a quasiresonant state (sorts of quasimodes) as initial data is always well approximated by the resonance expansion for all time $t \geq 0$.

\begin{remark}\sl
When $t / \vert \ln h \vert \to + \infty$ as $h \to 0$, the sum over the resonances is negligible and Theorem \ref{c1} simply yields $\chi e^{- i t P /h} \chi \psi ( P ) = \CO ( h^{\infty} )$.
\end{remark}

The remainder terms $\CO ( h^{\infty} )$ in Theorem \ref{c1} come from the $C^{\infty}$ pseudodifferential calculus. Thus, if the cut-off functions $\chi , \psi$ are in some Gevrey class, it is perhaps possible to replace these remainder terms by $\CO (e^{- h^{- \delta}} )$ for some $\delta >0$. In that case, the sum over the resonances will dominate the remainders until $t$ is of order $h^{- \delta}$.

Burq and Zworski \cite{BuZw01_01} (see also Tang and Zworski \cite{TaZw00_01}) have obtained a long time expansion of semiclassical propagators in terms of resonances close to the real axis. Their result in the present situation gives $\chi e^{- i t P /h} \chi \psi ( P ) = \CO ( h^{\infty} )$ for all $t > h^{ - L}$ for some $L > 0$.

\begin{proof}
Let $f \in C^{\infty}_{0} ( [ E_{0} - 3 \varepsilon , E_{0} + 3 \varepsilon ])$ be such that $f =1$ near $[ E_{0} - 2 \varepsilon , E_{0} + 2 \varepsilon ]$. Then, from the pseudodifferential calculus, we get
\begin{align*}
I := & \chi e^{- i t P /h} \chi \psi ( P) = \chi e^{ - i t P /h} f (P) \chi \psi (P) + \CO (h^{\infty} )  \\
= & \int_{\R} e^{- i t z /h} f ( z ) \chi d E_{z} \chi \psi (P) + \CO (h^{\infty} ) ,
\end{align*}
where $d E_{z}$, the spectral projection, is given by the Stone formula
\begin{equation*}
d E_{z} = \frac{1}{2 \pi i} \big( R_{+} ( z ) - R_{-} ( z ) \big) \, d z ,
\end{equation*}
and $R_{\pm} ( z ) = ( P - z )^{-1}$ is analytic for $\pm \im z > 0$. Then, 
\begin{equation*}
I = \frac{1}{2 \pi i} \int_{\R} e^{- i t z /h} f ( z ) \chi \big( R_{+} ( z ) - R_{-} ( z ) \big) \chi \psi (P) \, d z + \CO (h^{\infty} ) ,
\end{equation*}
Making a change of contour, we obtain
\begin{align}
I = \sum_{z_{\alpha} \in \res (P) \cap D (E_{0} , \mu h)} - \chi \residue \big( e^{- i t z /h} & R_{+} (z) , z = z_{\alpha} \big) \chi \psi (P)  \nonumber \\
& + I_{1} + I_{2} + I_{3} + I_{4} + I_{5} + \CO (h^{\infty}) ,  \label{c2}
\end{align}
where
\begin{equation} \label{c3}
\begin{aligned}
I_{j} & = \frac{1}{2 \pi i} \int_{\Gamma_{j}} e^{- i t z /h} f ( z ) \chi \big( R_{+} ( z ) - R_{-} ( z ) \big) \chi \psi (P) \, d z  &&\text{ for } j = 1 , 5 ,  \\
I_{j} & = \frac{1}{2 \pi i} \int_{\Gamma_{j}} e^{- i t z /h} \chi \big( R_{+} ( z ) - R_{-} ( z ) \big) \chi \psi (P) \, d z &&\text{ for } j = 2 , 3 , 4 ,  \\
\end{aligned}
\end{equation}
and $\Gamma_{1} = ] - \infty , E_{0} - 2 \varepsilon ]$, $\Gamma_{2} = E_{0} - 2 \varepsilon + i [0 , - \mu h ]$, $\Gamma_{3} = [E_{0} - 2 \varepsilon , E_{0} + 2 \varepsilon ] - i \mu h$, $\Gamma_{4} = E_{0} + 2 \varepsilon + i [- \mu h , 0]$ and $\Gamma_{5} = [E_{0} + 2 \varepsilon , + \infty [$ (see Figure~\ref{f3}). The theorem will follow from the estimates on the $I_{j}$'s given below.

\begin{figure}%[!h]
\begin{center}
\begin{picture}(0,0)%
\includegraphics{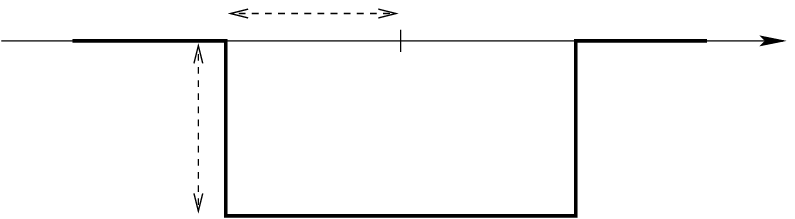}%
\end{picture}%
\setlength{\unitlength}{1381sp}%
\begingroup\makeatletter\ifx\SetFigFont\undefined%
\gdef\SetFigFont#1#2#3#4#5{%
  \reset@font\fontsize{#1}{#2pt}%
  \fontfamily{#3}\fontseries{#4}\fontshape{#5}%
  \selectfont}%
\fi\endgroup%
\begin{picture}(10844,3266)(504,-6405)
\put(2401,-5311){\makebox(0,0)[lb]{\smash{{\SetFigFont{10}{12.0}{\rmdefault}{\mddefault}{\updefault}$\mu h$}}}}
\put(2101,-3661){\makebox(0,0)[lb]{\smash{{\SetFigFont{10}{12.0}{\rmdefault}{\mddefault}{\updefault}$\Gamma_{1}$}}}}
\put(9001,-3661){\makebox(0,0)[lb]{\smash{{\SetFigFont{10}{12.0}{\rmdefault}{\mddefault}{\updefault}$\Gamma_{5}$}}}}
\put(5701,-6061){\makebox(0,0)[lb]{\smash{{\SetFigFont{10}{12.0}{\rmdefault}{\mddefault}{\updefault}$\Gamma_{3}$}}}}
\put(4501,-3286){\makebox(0,0)[lb]{\smash{{\SetFigFont{10}{12.0}{\rmdefault}{\mddefault}{\updefault}$2 \varepsilon$}}}}
\put(3901,-5461){\makebox(0,0)[lb]{\smash{{\SetFigFont{10}{12.0}{\rmdefault}{\mddefault}{\updefault}$\Gamma_{2}$}}}}
\put(8701,-5461){\makebox(0,0)[lb]{\smash{{\SetFigFont{10}{12.0}{\rmdefault}{\mddefault}{\updefault}$\Gamma_{4}$}}}}
\put(5776,-4561){\makebox(0,0)[lb]{\smash{{\SetFigFont{10}{12.0}{\rmdefault}{\mddefault}{\updefault}$E_{0}$}}}}
\end{picture}%
\end{center}
\caption{The contours $\Gamma_{j}$.}
\label{f3}
\end{figure}

%\underline{Estimations of $I_{1}$ and $I_{5}$.}

$\bullet$ Estimations of $I_{1}$ and $I_{5}$. Using that $\Gamma_{1} \cap \supp \psi = \emptyset$, there exists $g \in C^{\infty}_{0} ( \R )$ such that $g=1$ near $\supp f \cap \Gamma_{1}$ and $g =0$ near $\supp \psi$. Then, by pseudodifferential calculus, $g (P) \chi \psi (P) = \CO (h^{\infty})$. Therefore, \eqref{c3} yields
\begin{align}
I_{1} &= \chi e^{- i t P /h} \one_{\Gamma_{1}} ( P ) f (P) \chi \psi (P) \nonumber  \\
&= \chi e^{- i t P /h} \one_{\Gamma_{1}} ( P ) f (P) g (P) \chi \psi (P) \nonumber  \\
&= \CO (h^{\infty} ). \label{c4}
\end{align}
The same way, we get $I_{5} = \CO (h^{\infty})$.

%\underline{Estimations of $I_{3}$.}

$\bullet$ Estimations of $I_{3}$. Using Theorem \ref{aaa} for $R_{\pm}$, we obtain
\begin{equation} \label{c5}
\Vert I_{3} \Vert \lesssim \int_{\Gamma_{3}} \big\vert e^{- i t z /h} \big\vert \big\Vert \chi \big( R_{+} ( z ) - R_{-} ( z ) \big) \chi \big\Vert \, d z  = \CO (  e^{- \mu t} h^{-K} ) .
\end{equation}

%\underline{Estimations of $I_{2}$ and $I_{4}$.}

$\bullet$ Estimations of $I_{2}$ and $I_{4}$. Let $\theta = \nu h \vert \ln h \vert$ be as in Theorem \ref{aaa} and assume that the distortion occurs outside of the support of $\chi$. Then, $\chi R_{+} (z) \chi = \chi ( P_{\theta} - z)^{-1} \chi$ and $\chi R_{-} (z) \chi = \chi ( P_{- \theta} - z)^{-1} \chi$. In particular, we can write
\begin{equation} \label{c6}
I_{2} = \frac{1}{2 \pi i} \int_{\Gamma_{2}} e^{- i t z /h} \chi \big( ( P_{\theta} - z )^{-1} - ( P_{- \theta} - z )^{-1} \big) \chi \psi (P) \, d z .
\end{equation}
Let $k \in C^{\infty}_{0} ( \R )$ be such that $k =1$ near $E_{0} - 2 \varepsilon$ and $k=0$ near $\supp \psi$ (see Figure \ref{f4}). Then, for $z \in \Gamma_{2}$,
\begin{align*}
( P_{\pm \theta} -z)^{-1} &= ( P_{\pm \theta} -z)^{-1} k (P) + ( P_{\pm \theta} -z)^{-1} (1-k) (P)  \\
&= ( P_{\pm \theta} -z)^{-1} k (P) + ( P -z)^{-1} (1-k) (P)   \\
&\hspace{110pt} + ( P_{\pm \theta} -z)^{-1} ( P - P_{\pm \theta} ) ( P -z)^{-1} (1-k) (P) .
\end{align*}
Therefore \eqref{c6} becomes
\begin{equation} \label{c10}
I_{2} = J_{1}^{+} - J_{1}^{-} + J_{2}^{+} - J_{2}^{-} ,
\end{equation}
where
\begin{align*}
J_{1}^{\pm} &= \frac{1}{2 \pi i} \int_{\Gamma_{2}} e^{- i t z /h} \chi ( P_{\pm \theta} - z )^{-1} k (P) \chi \psi (P) \, d z   \\
J_{2}^{\pm} &= \frac{1}{2 \pi i} \int_{\Gamma_{2}} e^{- i t z /h} \chi ( P_{\pm \theta} -z)^{-1} ( P - P_{\pm \theta} ) ( P -z)^{-1} (1-k) (P) \chi \psi (P) \, d z .
\end{align*}

\begin{figure}%[!h]
\begin{center}
\begin{picture}(0,0)%
\includegraphics{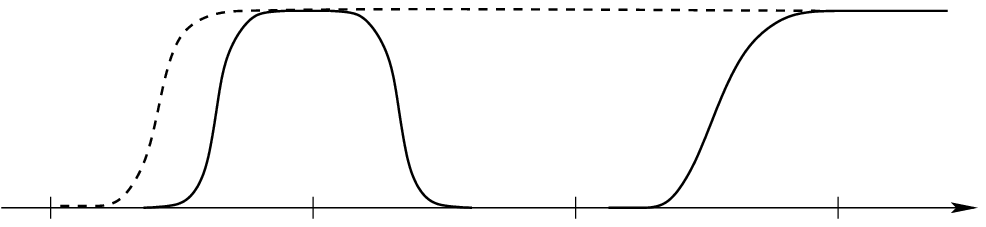}%
\end{picture}%
\setlength{\unitlength}{1381sp}%
\begingroup\makeatletter\ifx\SetFigFont\undefined%
\gdef\SetFigFont#1#2#3#4#5{%
  \reset@font\fontsize{#1}{#2pt}%
  \fontfamily{#3}\fontseries{#4}\fontshape{#5}%
  \selectfont}%
\fi\endgroup%
\begin{picture}(13469,3421)(-696,-4625)
\put(376,-2461){\makebox(0,0)[lb]{\smash{{\SetFigFont{10}{12.0}{\rmdefault}{\mddefault}{\updefault}$f (z)$}}}}
\put(2926,-4561){\makebox(0,0)[lb]{\smash{{\SetFigFont{10}{12.0}{\rmdefault}{\mddefault}{\updefault}$E_{0} - 2 \varepsilon$}}}}
\put(6526,-4561){\makebox(0,0)[lb]{\smash{{\SetFigFont{10}{12.0}{\rmdefault}{\mddefault}{\updefault}$E_{0} - \varepsilon$}}}}
\put(10576,-4561){\makebox(0,0)[lb]{\smash{{\SetFigFont{10}{12.0}{\rmdefault}{\mddefault}{\updefault}$E_{0}$}}}}
\put(-674,-4561){\makebox(0,0)[lb]{\smash{{\SetFigFont{10}{12.0}{\rmdefault}{\mddefault}{\updefault}$E_{0} - 3 \varepsilon$}}}}
\put(8026,-2461){\makebox(0,0)[lb]{\smash{{\SetFigFont{10}{12.0}{\rmdefault}{\mddefault}{\updefault}$\psi (z)$}}}}
\put(5026,-2461){\makebox(0,0)[lb]{\smash{{\SetFigFont{10}{12.0}{\rmdefault}{\mddefault}{\updefault}$k (z)$}}}}
\end{picture}%
\end{center}
\caption{The cut-off functions $f$, $k$ and $\psi$.}
\label{f4}
\end{figure}

Since $k$ and $\psi$ have disjoint support, the pseudodifferential calculus gives $k (P) \chi \psi (P) = \CO ( h^{\infty} )$. Thus, Theorem \ref{aaa} implies
\begin{equation} \label{c9}
\Vert J_{1}^{\pm} \Vert \lesssim \int_{\Gamma_{2}} \big\Vert ( P_{\pm \theta} - z )^{-1} \big\Vert  \big\Vert k (P) \chi \psi (P) \big\Vert \, \vert d z \vert = \CO (h^{\infty} ) ,
\end{equation}
since $\im z \leq 0$ for $z \in \Gamma_{2}$.

On the other hand, $P - P_{\pm\theta} \in \Psi_{h}^{0} ( \theta \< \xi \>^{2} )$ and $(P -z)^{-1} ( 1-k) (P) \in \Psi_{h}^{0} ( \< \xi \>^{-2} )$ uniformly in $z \in \Gamma_{2}$. Moreover, $P - P_{\pm \theta}$ is a differential operator whose coefficients are supported outside of the support of $\chi$. Then, the microlocal analysis gives
\begin{equation*}
\big\Vert ( P - P_{\pm \theta} ) ( P -z)^{-1} (1-k) (P) \chi \big\Vert = \CO (h^{\infty} ) ,
\end{equation*}
uniformly in $z \in \Gamma_{2}$. Combining this estimate with Theorem \ref{aaa}, we get
\begin{equation} \label{c7}
J_{2}^{\pm} = \CO (h^{\infty} ).
\end{equation}

Using \eqref{c9} and \eqref{c7} to estimate \eqref{c10}, we conclude $I_{2} = \CO (h^{\infty} )$. The same way, we have $I_{4} = \CO (h^{\infty})$.
\end{proof}

\appendix

\section{Construction of test curves}
\label{k6}

In this section, we construct Hamiltonian curves in $\Lambda_{-}$ with a prescribed asymptotic expansion at infinity. They are used in Section \ref{a1}, where test functions for the projection are built in a microlocal neighborhood of these curves. We will work on $\Lambda_{-}$, but the same work can be done in $\Lambda_{+}$.

Let $\gamma^{-} (t)$ be a Hamiltonian curve in $\Lambda_{-}$. From \cite[Section 3]{HeSj85_01}, the curve $\gamma^{-}$ satisfies, in the sense of expandible functions,
\begin{equation} \label{d7}
\gamma^{-} (t) \sim \sum_{k = 1}^{+ \infty} \gamma^{-}_{\mu_{k}} (t) e^{- \mu_{k} t} \qquad \text{with} \qquad \gamma^{-}_{\mu_{k}} (t) = \sum_{m=0}^{M_{\mu_{k}}} \gamma_{\mu_{k} ,m}^{-} t^{m} .
\end{equation}
The spectrum of $F_{p}$ is $\sigma ( F_{p} )= \{ - \lambda_{n} , \ldots , - \lambda_{1} , \lambda_{1} , \ldots , \lambda_{n} \}$. We denote by $\Pi_{\mu}$ the spectral projection on the eigenspace of $F_{p}$ associated to $- \mu$. Remark that
\begin{equation} \label{d4}
\ker ( F_{p} + \mu ) \oplus \im ( F_{p} + \mu ) = \R^{2n} .
\end{equation}

\begin{lemma}\sl \label{d3}
Let $\gamma^{-} (t)$ be a Hamiltonian curve in $\Lambda_{-}$. Assume that $\lambda_{j}$ is such that $\lambda_{j} = \alpha \cdot \lambda$, $\alpha \in \N^{n}$, implies $\vert \alpha \vert =1$. Then, $M_{\lambda_{j}} =0$ and $\gamma_{\lambda_{j} ,0}^{-} \in \ker ( F_{p} + \lambda_{j} )$.
\end{lemma}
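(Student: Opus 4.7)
The plan is to substitute the expansion \eqref{d7} of $\gamma^{-}(t)$ into the Hamilton equation and identify the term of order $e^{-\lambda_{j} t}$ on both sides. Writing $H_{p}(\gamma) = F_{p} \gamma + N(\gamma)$, where $N$ is a smooth vector-valued function vanishing at least to order $2$ at $(0,0)$ (since $\nabla V(x) = -\frac{1}{2}\mathrm{diag}(\lambda_{j}^{2})x + \mathcal{O}(x^{2})$), the equation $\dot\gamma = H_{p}(\gamma)$ becomes
\begin{equation*}
\dot\gamma^{-} = F_{p}\gamma^{-} + N(\gamma^{-}).
\end{equation*}
When the asymptotic expansion \eqref{d7} is plugged into the nonlinear term $N(\gamma^{-})$, one obtains a sum of contributions of the form (polynomial in $t$)$\cdot e^{-(\mu_{k_{1}} + \cdots + \mu_{k_{r}})t}$ with $r \geq 2$, because $N$ is at least quadratic. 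Each $\mu_{k_{i}} = \beta_{i}\cdot\lambda$ with $\beta_{i}\in\mathbb{N}^{n}$ and $\vert\beta_{i}\vert\geq 1$, so the resulting exponent is $(\sum\beta_{i})\cdot\lambda$ with $\vert\sum\beta_{i}\vert\geq 2$. The non-resonance hypothesis on $\lambda_{j}$ precisely rules this out, hence no nonlinear term contributes to the coefficient of $e^{-\lambda_{j} t}$.

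Consequently, matching the $e^{-\lambda_{j} t}$ terms in $\dot\gamma^{-} = F_{p}\gamma^{-} + N(\gamma^{-})$ yields the purely linear identity
\begin{equation*}
\dot\gamma^{-}_{\lambda_{j}}(t) - \lambda_{j}\gamma^{-}_{\lambda_{j}}(t) = F_{p}\gamma^{-}_{\lambda_{j}}(t), \qquad \text{i.e.} \qquad \dot\gamma^{-}_{\lambda_{j}}= (F_{p}+\lambda_{j})\gamma^{-}_{\lambda_{j}}.
\end{equation*}
Writing $\gamma^{-}_{\lambda_{j}}(t) = \sum_{m=0}^{M_{\lambda_{j}}} \gamma^{-}_{\lambda_{j},m} t^{m}$ and identifying powers of $t$ gives the recursion
\begin{equation*}
(m+1)\gamma^{-}_{\lambda_{j},m+1} = (F_{p}+\lambda_{j})\gamma^{-}_{\lambda_{j},m}, \qquad 0\le m\le M_{\lambda_{j}},
\end{equation*}
with the convention $\gamma^{-}_{\lambda_{j},M_{\lambda_{j}}+1}=0$.

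It remains to exploit the decomposition \eqref{d4} to collapse this recursion. From $(F_{p}+\lambda_{j})\gamma^{-}_{\lambda_{j},M_{\lambda_{j}}} = 0$ one gets $\gamma^{-}_{\lambda_{j},M_{\lambda_{j}}} \in \ker(F_{p}+\lambda_{j})$; but for $M_{\lambda_{j}}\geq 1$ the same coefficient appears as $M_{\lambda_{j}}\gamma^{-}_{\lambda_{j},M_{\lambda_{j}}} = (F_{p}+\lambda_{j})\gamma^{-}_{\lambda_{j},M_{\lambda_{j}}-1}$, so it also lies in $\im(F_{p}+\lambda_{j})$. By \eqref{d4}, the intersection of these two subspaces is trivial, forcing $\gamma^{-}_{\lambda_{j},M_{\lambda_{j}}}=0$, which contradicts the definition of $M_{\lambda_{j}}$ unless $M_{\lambda_{j}}=0$. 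Thus $M_{\lambda_{j}}=0$ and $\gamma^{-}_{\lambda_{j},0}\in\ker(F_{p}+\lambda_{j})$, as claimed.

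The main obstacle is the bookkeeping in the first paragraph: one needs to be careful about the fact that the formal series is only an asymptotic expansion (in the sense of expandible functions in \cite{HeSj85_01}), so the manipulation of $N(\gamma^{-}(t))$ as a generalized series with exponents $\beta\cdot\lambda$, $\beta\in\mathbb{N}^{n}$ with $\vert\beta\vert\geq 2$, has to be justified, and the uniqueness of such asymptotic expansions must be invoked to legitimately identify the coefficient of $e^{-\lambda_{j}t}$ on both sides.
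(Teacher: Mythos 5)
Your proof is correct and follows essentially the same route as the paper's: both expand $H_p$ around the origin, observe that the non-resonance hypothesis on $\lambda_j$ prevents any nonlinear cross term from contributing a $t^m e^{-\lambda_j t}$ term, derive the same recursion among the coefficients $\gamma^-_{\lambda_j,m}$, and then use the direct-sum decomposition \eqref{d4} to force the top coefficient to vanish when $M_{\lambda_j}\ge 1$. The only difference is cosmetic — you phrase the kernel-versus-image contradiction explicitly where the paper merely asserts it.
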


\begin{proof}
We have $\partial_{t} \gamma^{-} (t) = H_{p} ( \gamma^{-}(t) )$. Taking the Taylor expansion of $H_{p}$ at $0$, we get
\begin{equation} \label{d2}
\partial_{t} \gamma^{-} (t) = F_{p} ( \gamma^{-} (t) ) + G_{2} ( \gamma^{-} (t) ) + \cdots + G_{K} ( \gamma^{-} (t) ) + \CO ( e^{- ( \lambda_{j} + \varepsilon ) t} ),
\end{equation}
where $G_{k}$ is a polynomial of order $k$ and $K > \lambda_{j} / \lambda_{1}$. Since $\lambda_{j}$ can not be written as the sum of at least two terms $\mu_{\ell}$, the cross products $G_{k}$ in the previous formula provide no term of the form $e^{- \lambda_{j} t}$. Then,
\begin{equation}
\sum_{m=0}^{M_{\lambda_{j}}} - \lambda_{j} \gamma_{\lambda_{j} ,m}^{-} t^{m} + m \gamma_{\lambda_{j} ,m}^{-} t^{m-1} = \sum_{m=0}^{M_{\lambda_{j}}} F_{p} ( \gamma_{\lambda_{j} ,m}^{-} ) t^{m} ,
\end{equation}
which can be written
\begin{equation} \label{d5}
\left\{ \begin{aligned}
&( F_{p} + \lambda_{j} ) \gamma_{\lambda_{j} ,m}^{-} = 0 &&\text{for } m = M_{\lambda_{j}}  \\
&( F_{p} + \lambda_{j} ) \gamma_{\lambda_{j} ,m}^{-} = (m+1) \gamma_{\lambda_{j} ,m+1}^{-} \quad &&\text{for } 0 \leq m < M_{\lambda_{j}} .
\end{aligned} \right.
\end{equation}
If $M_{\lambda_{j}} \geq 1$, the previous equation, together with \eqref{d4}, gives a contradiction. Thus, $M_{\lambda_{j}} =0$ and $\gamma_{\lambda_{j} ,0}^{-} \in \ker ( F_{p} + \lambda_{j} )$ from \eqref{d5}.
\end{proof}

We begin the construction with the following formal result.

\begin{lemma}\sl \label{d6}
If $\widetilde{\gamma}^{-}_{\lambda_{j},0} \in \ker ( F_{p} + \lambda_{j})$ for all $j \in \{ 1 , \ldots , n \}$, then there exists a formal Hamiltonian curve $\gamma^{-}$ of the form \eqref{d7} such that
\begin{equation} \label{d9}
\forall j \in \{ 1 , \ldots , n \} \qquad \Pi_{\lambda_{j}} ( \gamma^{-}_{\lambda_{j},0} ) = \widetilde{\gamma}^{-}_{\lambda_{j},0} .
\end{equation}
\end{lemma}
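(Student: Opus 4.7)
The strategy is to substitute the ansatz \eqref{d7} into the Hamilton equation $\partial_t \gamma^- = H_p(\gamma^-)$ and build the coefficients $\gamma^-_{\mu_k}$ inductively on $k$. Taylor-expanding $H_p$ at $(0,0)$ as in \eqref{d2}, every contribution of the multilinear part $G_j(\gamma^-)$ ($j\ge 2$) to the coefficient of $e^{-\mu_k t}$ arises from products $\gamma^-_{\mu_{\ell_1}}\cdots\gamma^-_{\mu_{\ell_j}}$ with $\mu_{\ell_1}+\cdots+\mu_{\ell_j}=\mu_k$, which forces each $\mu_{\ell_i}<\mu_k$. Matching powers of $e^{-\mu_k t}$ therefore produces, at each level $k$, an $\R^{2n}$-valued polynomial equation in $t$ of the form
\begin{equation*}
(F_p + \mu_k)\gamma^-_{\mu_k}(t) = \partial_t \gamma^-_{\mu_k}(t) + R_k(t),
\end{equation*}
where $R_k(t)$ is a polynomial depending only on the already-constructed $\gamma^-_{\mu_\ell}$ with $\mu_\ell<\mu_k$.

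At each step the plan is to distinguish two cases. If $\mu_k\notin\{\lambda_1,\dots,\lambda_n\}$, the operator $F_p+\mu_k$ is invertible on $\R^{2n}$, and equating coefficients of the powers of $t$ from the highest down yields a unique polynomial solution $\gamma^-_{\mu_k}(t)$ of degree at most $\deg R_k$. If instead $\mu_k=\lambda_j$ for some $j$, the $F_p$-invariant decomposition \eqref{d4} allows one to split $\gamma^-_{\mu_k}(t) = a(t) + b(t)$ with $a(t)\in\ker(F_p+\lambda_j)$ and $b(t)\in\im(F_p+\lambda_j)$. The image component $b(t)$ is determined uniquely as a polynomial because $F_p+\lambda_j$ restricts to a bijection on $\im(F_p+\lambda_j)$. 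Applying $\Pi_{\lambda_j}$ to the equation kills the left-hand side and reduces the kernel component to $\partial_t a(t) = -\Pi_{\lambda_j}R_k(t)$, so $a(t)$ is determined up to the constant of integration $a(0)\in\ker(F_p+\lambda_j)$. I would then set $a(0)=\widetilde{\gamma}^-_{\lambda_j,0}$; since $b(0)\in\im(F_p+\lambda_j)$, this yields $\Pi_{\lambda_j}(\gamma^-_{\lambda_j,0})=\widetilde{\gamma}^-_{\lambda_j,0}$ as required by \eqref{d9}. The assumption that $\widetilde{\gamma}^-_{\lambda_j,0}$ already lies in $\ker(F_p+\lambda_j)$ is precisely what makes this assignment consistent.

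A straightforward induction on $k$ then shows that each $\gamma^-_{\mu_k}$ is a polynomial in $t$ of finite degree $M_{\mu_k}$, so the resulting formal expansion has the shape demanded by \eqref{d7}. The main, essentially bookkeeping, difficulty will be to verify that the polynomial degrees of the $R_k$ remain finite through the induction and that the splitting in the resonant case $\mu_k=\lambda_j$ leaves exactly the one free parameter $\Pi_{\lambda_j}(\gamma^-_{\mu_k,0})$ that the lemma prescribes; no new issue arises beyond those already encountered in the proof of Lemma~\ref{d3}.
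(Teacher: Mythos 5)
Your proposal is correct and follows essentially the same line as the paper's proof: an induction on the exponents $\mu_k$, using the Taylor expansion of $H_p$ to isolate a polynomial equation at each level with remainder depending only on lower levels, inverting $F_p+\mu_k$ directly in the non-resonant case, and in the resonant case splitting via the decomposition \eqref{d4} into kernel and image components, solving the image part uniquely and integrating the kernel part with the free constant fixed to $\widetilde{\gamma}^-_{\lambda_j,0}$. The only difference is cosmetic: you work with the polynomial $\gamma^-_{\mu_k}(t)$ as a whole, whereas the paper writes out the downward recursion coefficient by coefficient using $\Pi_{\lambda_j}$ and the inverse $K_{\lambda_j}$ of $F_p+\lambda_j$ on its image.
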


\begin{proof}
We construct the coefficients $\gamma_{\mu_{k}}^{-}$ inductively. Using the Taylor expansion of $H_{p}$ at $0$ as in \eqref{d2}, one can see that it is enough to find $\gamma_{\mu_{k}}^{-}$, $k \geq 0$, such that
\begin{equation} \label{d8}
\sum_{m=0}^{M_{\mu_{k}}} - \mu_{k} \gamma_{\mu_{k} ,m}^{-} t^{m} + m \gamma_{\mu_{k} ,m}^{-} t^{m-1} = \sum_{m=0}^{M_{\mu_{k}}} F_{p} ( \gamma_{\mu_{k} ,m}^{-} ) t^{m} + \sum_{m=0}^{N_{\mu_{k}}} R_{\mu_{k} ,m} t^{m} ,
\end{equation}
where the $R_{ \mu_{k} ,m}$ depend only on the $\gamma_{\mu_{\ell}}^{-}$ for $\ell < k$. Assume that the $\gamma_{\mu_{\ell}}^{-}$ have been chosen to satisfy \eqref{d8} for all $\mu_{\ell} < \mu_{k}$ and \eqref{d9} for all $\lambda_{j} < \mu_{k}$.

If $\mu_{k} \notin \{ \lambda_{1} , \ldots , \lambda_{n} \}$, then it is enough to take $M_{\mu_{k}} = N_{\mu_{k}}$,
\begin{equation*}
\gamma_{\mu_{k} ,M_{\mu_{k}}}^{-} = - ( F_{p} + \mu_{k} )^{-1} R_{\mu_{k} , M_{\mu_{k}}} ,
\end{equation*}
and, for $0 \leq m < M_{\mu_{k}}$,
\begin{equation*}
\gamma_{\mu_{k} ,m}^{-} = ( F_{p} + \mu_{k} )^{-1} \big( ( m +1) \gamma_{\mu_{k} ,m+1}^{-} - R_{\mu_{k} ,m} \big) .
\end{equation*}
If $\mu_{k} = \lambda_{j}$ for some $j$, then we take $M_{\mu_{k}} = N_{\mu_{k}} +1$ and
\begin{align*}
\gamma_{\mu_{k} ,M_{\mu_{k}}}^{-} &= M_{\mu_{k}}^{-1} \Pi_{\lambda_{j}} R_{\mu_{k} , M_{\mu_{k}} -1}  \\
\gamma_{\mu_{k} ,M_{\mu_{k}} -1 }^{-} &= ( M_{\mu_{k}} -1)^{-1} \Pi_{\lambda_{j}} R_{\mu_{k} , M_{\mu_{k}} -2} - K_{\lambda_{j}} ( 1 - \Pi_{\lambda_{j}} ) R_{\mu_{k} , M_{\mu_{k}} -1}  \\
\vdots \quad &= \qquad \vdots \\
\gamma_{\mu_{k} , 0}^{-} &= \widetilde{\gamma}^{-}_{\lambda_{j},0} - K_{\lambda_{j}} ( 1 - \Pi_{\lambda_{j}} ) R_{\mu_{k} , 0} .
\end{align*}
Here, $K_{\lambda_{j}}$ is the inverse of the map $F_{p} + \lambda_{j} : \im ( F_{p} + \lambda_{j} ) \longrightarrow \im ( F_{p} + \lambda_{j} )$. With these choices, \eqref{d9} and \eqref{d8} are always verified.
\end{proof}

\begin{proposition}\sl \label{d10}
If $\widetilde{\gamma}^{-}_{\lambda_{j},0} \in \ker ( F_{p} + \lambda_{j})$ for all $j \in \{ 1 , \ldots , n \}$, then there exists a Hamiltonian curve $\gamma^{-} \in \Lambda_{-}$ such that
\begin{equation*}
\forall j \in \{ 1 , \ldots , n \} \qquad \Pi_{\lambda_{j}} ( \gamma^{-}_{\lambda_{j},0} ) = \widetilde{\gamma}^{-}_{\lambda_{j},0} .
\end{equation*}
\end{proposition}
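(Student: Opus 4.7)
The strategy is to upgrade the formal curve produced by Lemma \ref{d6} to a genuine Hamiltonian trajectory on $\Lambda_{-}$, by first Borel-summing the formal series and then correcting the resulting approximate trajectory to a true one using the hyperbolic structure of $(0,0)$.

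First, apply Lemma \ref{d6} to obtain a formal expansion
\begin{equation*}
\widehat{\gamma}^{-}(t) \sim \sum_{k=1}^{+\infty} \gamma^{-}_{\mu_{k}}(t) e^{-\mu_{k} t}, \qquad \Pi_{\lambda_{j}}\bigl(\gamma^{-}_{\lambda_{j},0}\bigr) = \widetilde{\gamma}^{-}_{\lambda_{j},0},
\end{equation*}
such that $\partial_t \widehat{\gamma}^{-} - H_p(\widehat{\gamma}^{-})$ vanishes as a formal series. By a Borel-type resummation (truncating the series at a level depending on $t$, as in \cite{HeSj85_01}), one constructs an actual smooth function $\gamma_\infty(t)$, defined for $t \geq T_0$ with $T_0$ large, whose asymptotic expansion at $+\infty$ is exactly $\widehat{\gamma}^{-}(t)$. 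In particular, $\gamma_\infty(t) \to (0,0)$ as $t \to +\infty$, and
\begin{equation*}
r(t) := \partial_t \gamma_\infty(t) - H_p(\gamma_\infty(t)) = \CO(e^{-Nt}) \qquad \text{for every } N > 0.
\end{equation*}

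Next, I would look for a true Hamiltonian trajectory $\gamma^{-}(t) = \gamma_\infty(t) + \delta(t)$ with $\delta(t)$ decaying faster than any exponential. Writing the evolution equation for $\delta$, we get
\begin{equation*}
\partial_t \delta(t) = F_p\,\delta(t) + \bigl(H_p(\gamma_\infty + \delta) - H_p(\gamma_\infty) - F_p\,\delta\bigr) - r(t).
\end{equation*}
Decompose $\R^{2n} = E_s \oplus E_u$ into the stable and unstable subspaces of $F_p$, with corresponding projectors $P_s$ (eigenvalues $-\lambda_j$) and $P_u$ (eigenvalues $+\lambda_j$). Since we demand $\gamma^{-} \in \Lambda_{-}$, i.e.\ $\delta(t) \to 0$ as $t \to +\infty$, I recast the equation as a fixed point problem on the Banach space $X_N = \{\delta \in C^{0}([T_0,+\infty);\R^{2n}) \mid \sup_{t} e^{Nt}\|\delta(t)\| < \infty\}$ via Duhamel's formula
\begin{equation*}
\delta(t) = \int_{T_0}^{t} e^{(t-s)F_p} P_s\bigl(Q(\gamma_\infty,\delta)(s) - r(s)\bigr)\,ds - \int_{t}^{+\infty} e^{(t-s)F_p} P_u\bigl(Q(\gamma_\infty,\delta)(s) - r(s)\bigr)\,ds,
\end{equation*}
where $Q$ denotes the superlinear remainder. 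The two integrals converge because on $E_s$ the semigroup $e^{tF_p}$ is contracting while on $E_u$ it is expanding but integrated backward. A standard contraction mapping argument, for $T_0$ large enough and $N$ chosen larger than $\mu_K$ (for any prescribed $K$), produces a solution $\delta \in X_N$. Letting $N \to +\infty$, this $\delta$ is actually faster than any exponential, so $\gamma^{-} = \gamma_\infty + \delta$ has the same asymptotic expansion as $\widehat{\gamma}^{-}$, and it is a genuine trajectory converging to $(0,0)$, hence a curve in $\Lambda_{-}$.

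The main difficulty is the fixed point step, because although $F_p$ is hyperbolic, its stable and unstable subspaces may have eigenvalues of very different sizes (all the $\lambda_j$'s), and the remainder $r(t)$ is only controlled in the formal sense. The way around this is exactly the splitting above: integrating the stable part forward from $T_0$ and the unstable part backward from $+\infty$ produces bounded operators on $X_N$ for every $N$, and one exploits the fact that $r \in X_N$ for \emph{all} $N$ to close the contraction uniformly. Once $\gamma^{-}$ is obtained, the identity $\Pi_{\lambda_j}(\gamma^{-}_{\lambda_j,0}) = \Pi_{\lambda_j}(\widehat{\gamma}^{-}_{\lambda_j,0}) = \widetilde{\gamma}^{-}_{\lambda_j,0}$ follows from the coincidence of the asymptotic expansions.
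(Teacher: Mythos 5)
Your overall strategy --- take the formal series from Lemma \ref{d6}, produce a good approximate solution, and then correct it to a true Hamiltonian trajectory by a fixed point argument --- is the same as the paper's. But the specific fixed point formulation you propose has a genuine gap: the forward integral on the stable subspace does not preserve the decay class $X_N$. Concretely, with $\Vert e^{(t-s)F_p}P_s\Vert \lesssim e^{-\lambda_1 (t-s)}$ for $t\geq s$ and a source $\CO(e^{-Ns})$, you get
\begin{equation*}
\Big\vert \int_{T_0}^t e^{(t-s)F_p}P_s\big(\cdots\big)(s)\,ds\Big\vert \lesssim e^{-\lambda_1 t}\int_{T_0}^{t}e^{(\lambda_1-N)s}\,ds = \CO(e^{-\lambda_1 t}),
\end{equation*}
which is $\CO(e^{-\lambda_1 t})$ and not $\CO(e^{-Nt})$: the term $s$ near $T_0$ persists, damped only at the slowest stable rate. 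So your map does not send $X_N$ into itself once $N>\lambda_1$, the contraction argument never gets off the ground in $X_N$, and the subsequent ``let $N\to +\infty$'' step is unjustified. The fix is to integrate the stable part backward from $+\infty$ as well; this converges because $\int_t^{+\infty}e^{\lambda_n(s-t)}e^{-Ns}\,ds = (N-\lambda_n)^{-1}e^{-Nt}$ for $N>\lambda_n$, which does give the decay you want.

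The paper's proof is in fact more elementary than what you propose and sidesteps both issues. Instead of Borel resummation it takes a finite truncation $\rho(t) = \sum_{\mu_k\leq N}\gamma^{-}_{\mu_k}(t)e^{-\mu_k t}$ of the formal series with $N>\lambda_n$, which already gives a residual $R(t)=\CO(e^{-(N+\varepsilon)t})$. It then sets up the Picard iteration
\begin{equation*}
r_{j+1}(t) = -\int_t^{+\infty}\big(H_p(\rho+r_j)-H_p(\rho)-R\big)(s)\,ds ,
\end{equation*}
with \emph{no} Duhamel factor $e^{(t-s)F_p}$ and \emph{no} splitting into stable and unstable subspaces. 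The contraction and the self-map estimate in the weight $e^{-Nt}$ use only the Lipschitz bound $\vert H_p(\rho+r_j)-H_p(\rho)\vert\leq C_1\vert r_j\vert$ and the smallness of $C_1/N$; the hyperbolic structure is already fully used in Lemma \ref{d6} and plays no role in this last step. Finally, since $r(t)=o(e^{-\lambda_n t})$, the leading coefficients $\gamma^{-}_{\lambda_j,0}$ of the expansion coincide with those of $\rho$, which gives the conclusion without needing superexponential decay at all.
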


\begin{proof}
Let
\begin{equation*}
\rho (t) = \sum_{\mu_{k} \leq N} \sum_{m=0}^{M_{\mu_{k}}} \gamma_{\mu_{k} ,m}^{-} t^{m} e^{- \mu_{k} t} ,
\end{equation*}
where the $\gamma_{\mu_{k} ,m}^{-}$ are given by Lemma \ref{d6} and $N$ will be fixed ulteriorly. Since \eqref{d8} is verified for all $\mu_{k} \leq N$, we have
\begin{equation*}
\partial_{t} \rho (t) = H_{p} ( \rho (t)) + R (t),
\end{equation*}
with $R (t) = \CO ( e^{- (N + \varepsilon) t} )$. We seek a solution of the form $\gamma^{-} (t) = \rho (t) + r (t)$. Then, $r$ must satisfies
\begin{equation}
\partial_{t} r = H_{p} ( \rho +r) - H_{p} (\rho) - R.
\end{equation}
Let $T_{N} \geq 0$ be such that $\vert R (t) \vert \leq e^{- N t}$ and $\vert \rho (t) \vert \leq 1$ for all $t \geq T_{N}$. We define inductively $r_{j} (t)$ by
\begin{equation} \label{d13}
\left\{ \begin{aligned}
&r_{0} (t) = 0 \\
&r_{j+1} (t) = - \int_ {t}^{+ \infty} \big( H_{p} ( \rho +r_{j} ) - H_{p} (\rho) - R \big) (s) \, d s .
\end{aligned} \right.
\end{equation}

\begin{lemma}\sl \label{d11}
For $N$ large enough, the functions $( r_{j} )_{j \geq 0}$ exist on $[ T_{N} , + \infty [$ and
\begin{equation} \label{d12}
\vert r_{j} (t) \vert \leq e^{- N t} ,
\end{equation}
for all $t \geq T_{N}$.
\end{lemma}

\begin{proof}[Proof of Lemma \ref{d11}]
Define
\begin{equation}
C_{1} = \sup_{\vert u \vert \leq 2} \vert d H_{p} ( u ) \vert .
\end{equation}
We will prove the lemma inductively. First, $r_{0}$ satisfies \eqref{d12}. Assume now that $r_{j-1}$ exists on $[ T_{N} , + \infty [$ and verifies \eqref{d12}. In particular, $\vert r_{j-1} (t) \vert \leq 1$ for $t \geq T_{N}$. Then, \eqref{d13} gives
\begin{align}
\vert r_{j} (t) \vert &\leq \int_{t}^{+ \infty} \big( \big\vert H_{p} ( \rho +r_{j-1} ) - H_{p} (\rho) \big\vert + \vert R \vert \big) d s  \nonumber \\
&\leq \int_{t}^{+ \infty} \big( C_{1} \vert r_{j-1} \vert + \vert R \vert \big) d s \leq \int_{t}^{+ \infty} (C_{1} +1) e^{- N s} d s  \nonumber  \\
&\leq \frac{C_{1} +1}{N} e^{-N t} ,
\end{align}
for $t \geq T_{N}$. Therefore, if $N \geq C_{1}+1$, $r_{j}$ satisfies \eqref{d12} and the lemma follows.
\end{proof}

\begin{lemma}\sl \label{d14}
For $N$ large enough, we have
\begin{equation} \label{d15}
\big\vert r_{j+1} (t) - r_{j} (t) \big\vert \leq \frac{e^{-N t}}{2^{j}} ,
\end{equation}
for $j \geq 0$ and $t \geq T_{N}$.
\end{lemma}

\begin{proof}[Proof of Lemma \ref{d14}]
For $j=0$ and $N$ large enough, Lemma \ref{d11} gives $\vert r_{1} (t) - r_{0} (t) \vert = \vert r_{1} (t) \vert \leq e^{-N t}$. Assume that \eqref{d15} holds for some $j -1 \geq 0$. Using \eqref{d13}, we get
\begin{align}
\vert r_{j+1} (t) - r_{j} (t) \vert &\leq \int_{t}^{+ \infty} \big\vert H_{p} ( \rho +r_{j} ) - H_{p} (\rho + r_{j-1}) \big\vert \, d s \nonumber \\
&\leq C_{1} \int_{t}^{+ \infty} \big\vert r_{j} - r_{j-1} \big\vert \, d s \leq C_{1} \int_{t}^{+ \infty} \frac{e^{-N s}}{2^{j-1}} \, d s  \nonumber \\
&\leq \frac{2 C_{1}}{N} \frac{e^{-N t}}{2^{j}} .
\end{align}
Then, for $N \geq 2 C_{1}$, \eqref{d15} holds and the lemma follows.
\end{proof}

\begin{lemma}\sl \label{d16}
For $N$ large enough, there exists $r \in C^{\infty} ( [ T_{N} , + \infty [)$ such that

$i)$ for $t \geq T_{N}$, we have $\vert r (t) \vert \leq e^{-N t}$,

$ii)$ for all $j \geq 0$,
\begin{equation*}
\big\Vert e^{N t} (r_{j} - r) \big\Vert_{L^{\infty} ( [T_{N} , + \infty [)} \leq 2^{1-j} ,
\end{equation*}

$iii)$ the curve $\gamma^{-} = \rho + r$ satisfies $\partial_{t} \gamma^{-} = H_{p} ( \gamma^{-} )$.
\end{lemma}

\begin{proof}[Proof of Lemma \ref{d16}]
Using standard arguments, Lemma \ref{d14} provides us with a function $r \in C^{0} ( [ T_{N} , + \infty [)$ satisfying $ii)$. Then, part $i)$ follows directly from Lemma \ref{d11}. On the other hand,
\begin{align*}
\Big\vert \int_{t}^{+ \infty} \big( H_{p} ( \rho + r_{j} ) & - H_{p} (\rho) - R \big) - \big( H_{p} ( \rho + r ) - H_{p} (\rho) - R \big) d s \Big\vert   \\
&\leq \int_{t}^{+ \infty} \big\vert H_{p} (\rho + r_{j} ) - H_{p} ( \rho + r ) \big\vert d s \leq \int_{t}^{+ \infty} C_{1} \vert r_{j} - r \vert d s  \\
&\leq C_{1} 2^{1-j} \int_{t}^{+ \infty} e^{- N s} d s \leq \frac{C_{1}}{N} 2^{1-j} \longrightarrow 0,
\end{align*}
as $j \rightarrow + \infty$. Then, taking the limit $j \rightarrow + \infty$ in \eqref{d13}, we obtain
\begin{equation*}
r (t) = - \int_ {t}^{+ \infty} \big( H_{p} ( \rho +r ) - H_{p} (\rho) - R \big) (s) \, d s .
\end{equation*}
Thus, $r \in C^{\infty} ( [ T_{N} , + \infty [)$ and $\gamma^{-} = \rho + r$ satisfies $\partial_{t} \gamma^{-} = H_{p} ( \gamma^{-} )$.
\end{proof}

To finish the proof of Proposition \ref{d10}, we impose in addition that $\lambda_{n} < N$. Then, the function $\gamma^{-}$ of Lemma \ref{d16} is a Hamiltonian curve in $\Lambda_{-}$ and, since $r (t) = o (e^{- \lambda_{n} t})$, Lemma~\ref{d6} assures that $\Pi_{\lambda_{j}} ( \gamma^{-}_{\lambda_{j},0} ) = \widetilde{\gamma}^{-}_{\lambda_{j},0} $ for all $j \in \{ 1 , \ldots , n \}$.
\end{proof}

\bibliographystyle{amsplain}
\providecommand{\bysame}{\leavevmode\hbox to3em{\hrulefill}\thinspace}
\providecommand{\MR}{\relax\ifhmode\unskip\space\fi MR }
% \MRhref is called by the amsart/book/proc definition of \MR.
\providecommand{\MRhref}[2]{%
  \href{http://www.ams.org/mathscinet-getitem?mr=#1}{#2}
}
\providecommand{\href}[2]{#2}

\end{document}